\newtheorem*{statement}{Theorem}
\newtheorem*{conjecturalstatement}{Conjecture}
\newtheorem{theorem}{Theorem}[section]
\newtheorem{lemma}[theorem]{Lemma}
\newtheorem{proposition}[theorem]{Proposition}
\newtheorem{prop}[theorem]{Proposition}
\newtheorem{cor}[theorem]{Corollary}
\newtheorem{conjecture}[theorem]{Conjecture}
\theoremstyle{remark}
\newtheorem{remark}{Remark}[section]
\theoremstyle{definition}
\newtheorem{example}[theorem]{Example}
\def\Z{\mathbb{Z}}
\def\uncrossing{\mathrm{uncrossing}}
\def\wt{\mathrm{wt}}
\def\Pr{\mathrm{Pr}}
\def\uPr{\ddddot{\Pr}}
\def\UPr{\ddddot{{\mathcal Pr}}}
\def\L{\mathcal{L}}
\def\det{{\mathrm{det}}}
\def\rest{{\rm rest}}
\def\uqsln{{U_q'({\mathfrak {\hat {sl_n}}})}}
\def\R{{\mathbb R}}
\author{Thomas Lam}
\email{tfylam@umich.edu}
\address{Department of Mathematics,
University of Michigan, 530 Church St., Ann Arbor, MI 48109 USA}
\author{Pavlo Pylyavskyy}
\email{ppylyavs@umn.edu}
\address{Department of Mathematics,
University of Minnesota, 206 Church St. SE, Minneapolis, MN 55455 USA}
\thanks{T.L. was partially supported by NSF grant DMS-0901111, and by a Sloan Fellowship.}
\thanks{P.P. was partially
supported by NSF grant DMS-0757165.} 
\title{Inverse problem in cylindrical electrical networks}
\begin{document}
\begin{abstract}
In this paper we study the inverse Dirichlet-to-Neumann problem for certain cylindrical electrical networks.  We define and study a birational transformation acting on cylindrical 
electrical networks called the electrical $R$-matrix.  We use this transformation to formulate a general conjectural solution to this inverse problem on the cylinder.  
This conjecture extends work of Curtis, Ingerman, and Morrow \cite{CIM}, and of de Verdi\`ere, Gitler, and Vertigan \cite{dVGV} for circular planar electrical
networks. 
We show that our conjectural solution holds for certain ``purely cylindrical'' networks.  Here we apply
the grove combinatorics introduced by Kenyon and Wilson \cite{KW}. 
\end{abstract}
\maketitle

\section{Introduction}

In this paper we consider the simplest of electrical networks, namely those that consist of only resistors.  The electrical properties of such a network $N$ are completely described 
by the {\it response matrix} $L(N)$, which computes the current that flows through the network when certain voltages are fixed at the boundary vertices of $N$.  

\medskip

De Verdi\`{e}re-Gitler-Vertigan \cite{dVGV} and Curtis-Ingerman-Morrow \cite{CIM} studied the {\it {inverse (Dirichlet-to-Neumann) problem}} for {\it circular planar} electrical networks. Specifically, they considered networks embedded in a disk without crossings, with boundary vertices located on the boundary of the disk. The following theorem summarizes their results.

\begin{statement}\
 \begin{enumerate}
\item Any circular planar electrical network is electrically equivalent to some {\emph {critical}} network, which is characterized by its {\emph {medial graph}} being {\emph {lenseless}}  (see \cite[Th\'{e}or\`{e}me 2]{dVGV}).  
\item Any two circular planar electrical networks having the same response matrix can be connected by simple local transformations: series-parallel, loop removal, pendant removal, and star-triangle transformations discussed in Section \ref{ss:trans}.  Furthermore, if both networks are critical, then only star-triangle transformations are required (see \cite[Th\'{e}or\`{e}me 4]{dVGV} or \cite[Theorem 1]{CIM}).
\item The edge conductances of a critical circular planar electrical network can be recovered uniquely from the response matrix (see \cite[Theorem 2]{CIM} or \cite[Th\'{e}or\`{e}me 3]{dVGV}).
\item The response matrices realizable by circular planar networks are the ones having all {\emph {circular minors}} nonnegative (see \cite[Theorem 3]{CIM}).
\item 
The space $Y$ of response matrices of circular planar networks has a stratification by cells $Y = \sqcup C_i$ where each $C_i \simeq \R_{>0}^{d_i}$ can be obtained as the set of response matrices for a fixed critical network with varying edge weights (see \cite[Th\'{e}or\`{e}me 3 and 5]{dVGV} or \cite[Theorem 4]{CIM}).
 \end{enumerate}
\end{statement}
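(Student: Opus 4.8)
The plan is to base everything on the combinatorics of the \emph{medial graph}, following \cite{dVGV} and \cite{CIM}. To a circular planar network $N$ one associates a collection of strands (medial curves), each joining two of the $2n$ medial boundary points; $N$ is \emph{critical} exactly when no strand crosses itself and no two strands cross twice, i.e. the strand arrangement is lenseless. The first step is to observe that each of the four local moves of Section~\ref{ss:trans} is an elementary operation on strands: series/parallel, loop, and pendant removal delete a monogon or a degenerate strand, while the star-triangle move is the Yang--Baxter ``triangle'' move sliding a strand past a crossing of two others. For part (1), given any $N$, one repeatedly uses the removals to kill monogons and empty lenses and star-triangle moves to shrink and then delete an innermost lens; this is the standard reduction of wiring diagrams and terminates by induction on the number of strand crossings, producing a critical network. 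That the response matrix is unchanged at each move is a direct check from Ohm's and Kirchhoff's laws.

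For parts (3) and (4) the main tool is the subtraction-free expansion of a \emph{circular minor} of $L(N)$: the minor indexed by boundary sequences $(P;Q)$ equals a positive sum, over non-crossing connections of $P$ to $Q$ through $N$, of products of edge conductances (an all-minors matrix-tree / Lindström--Gessel--Viennot computation). This immediately yields one inclusion in (4) --- every response matrix has nonnegative circular minors --- and shows that a given minor is positive precisely when the corresponding connection is realizable in $N$. For a critical network the realizable connections can be read off the lenseless medial graph, and ordering edges by ``peeling'' boundary strands makes the system expressing the conductances in terms of the positive circular minors triangular and subtraction-free; this gives uniqueness of recovery (part (3)) and an explicit algorithm. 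The converse of (4) I would prove by induction on the number of positive circular minors of a candidate matrix $M$: realize its connection pattern by a critical medial graph, solve the triangular system for candidate conductances, and verify via the forward formula that the resulting network has response matrix $M$, handling vanishing minors by deleting or contracting the corresponding edge.

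Part (2) then follows: reduce each of two networks with equal response matrix to a critical network by part (1); by the minor formula the two critical networks realize the same connections, hence (a combinatorial lemma on lenseless strand arrangements) their medial graphs differ only by star-triangle moves, and within a fixed critical network the conductances are determined by part (3), with motion inside a connection type realized by the explicit birational star-triangle map. For part (5), fix one critical network $N_i$ per connection type; by part (3) the edge weights $\R_{>0}^{d_i}$ map injectively onto a cell $C_i$ of response matrices, the cells are disjoint since a response matrix determines which circular minors vanish, and they cover $Y$ by part (4); the closure relations come from letting edge weights tend to $0$ or $\infty$, i.e. deletion/contraction, which lands in the cell of a smaller critical network.

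The step I expect to be the main obstacle is the converse direction of (4) together with (3): making the passage from circular minors back to conductances genuinely algorithmic and proving that the network so constructed really reproduces the given matrix. This rests on the precise dictionary between lenseless medial graphs, the poset of realizable connections, and the triangularity of the minor-to-conductance system; once that is in place, the remaining work is the routine verification that the four local moves preserve the response matrix and suffice to reach criticality.
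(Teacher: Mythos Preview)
The paper does not prove this statement. It is presented in the introduction as a summary of prior results, with each item attributed by citation to \cite{dVGV} and/or \cite{CIM}; no argument is given in this paper, and the statement functions purely as background for the cylindrical conjecture that follows. So there is no ``paper's own proof'' to compare against.

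Your sketch is a reasonable outline of how the cited papers proceed, and the ingredients you name --- medial-graph strands, lens reduction, the star-triangle move as a Yang--Baxter move on strands, circular-minor positivity via grove/forest expansions, and a boundary-peeling recovery algorithm --- are indeed the ones used in \cite{dVGV} and \cite{CIM}. The one place where your plan is thinner than what those papers actually do is the step you already flagged: the converse of (4) and the recovery algorithm (3). In \cite{CIM} this is not a soft triangularity argument but a concrete inductive construction (boundary-spike and boundary-edge reductions) together with a nontrivial realizability proof that a matrix with the prescribed circular-minor positivity pattern is the response matrix of the corresponding critical graph; similarly, the combinatorial lemma you invoke in (2) --- that two lenseless medial graphs with the same strand pairing differ by star-triangle moves --- is a genuine result in \cite{dVGV}, not an immediate observation. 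If you were actually writing up a proof you would need to fill those in, but for the purposes of this paper the correct move is simply to cite, as the authors do.
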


It is an open problem to extend these results to electrical networks embedded in surfaces with more complicated topology.  In this paper we make progress towards understanding the inverse problem for networks embedded in a cylinder. Our first main result is to construct a birational transformation we call the {\it electrical $R$-matrix}.  This transformation acts on the edge weights of a local portion of an electrical network embedded into the cylinder, preserving all the electrical properties of the network (Corollary \ref{cor:resp}).  Furthermore, this electrical $R$-matrix satisfies the Yang-Baxter relation (Theorem \ref{thm:RYB}), and is a close analogue of the ``geometric $R$-matrices'' of affine crystals, to be explained below.

Using the electrical $R$-matrix, we formulate the following general conjecture.  A more precise version is given as Conjecture \ref{conj:gen}.

\begin{conjecturalstatement}
\
\begin{itemize}
\item[(1')]
Any cylindrical electrical network is electrically equivalent to a critical cylindrical electrical network.
\item[(2')]
Any two cylindrical electrical networks $G$ and $G'$ with the same {\it universal response matrices} are connected by local electrical equivalences.  Furthermore, if $G$ and $G'$ are both critical, then only star-triangle transformations, and electrical $R$-matrix transformations are needed.
\item[(3')]
If a cylindrical electrical network is critical, then the edge conductances can be recovered up to the electrical $R$-matrix action.
\item[(5')]
The space $X$ of universal response matrices of cylindrical electrical networks has an infinite stratification by $X = \sqcup C_i$ where each $C_i \simeq \R_{>0}^{d_i} \times \R_{\geq 0}^{e_i}$ is a semi-closed cell that can be obtained as the set of universal response matrices for a fixed critical network with varying edge weights. 
\end{itemize}
\end{conjecturalstatement}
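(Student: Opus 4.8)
\medskip

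The plan is to reduce all four assertions to the combinatorics of \emph{medial graphs} on the cylinder, and then, for the ``purely cylindrical'' networks, to make everything explicit through the grove polynomials of Kenyon and Wilson \cite{KW}. First I would set up the dictionary between a cylindrical electrical network $G$ and its medial graph $\widehat{G}$, a system of strands on the annulus whose endpoints interleave with the boundary vertices of $G$. Criticality of $G$ should be shown equivalent to $\widehat{G}$ being \emph{lenseless}: no strand is a contractible loop, no strand self-intersects, and no two strands cross more than once. Each local electrical equivalence (series--parallel, loop and pendant removal, star--triangle) becomes a local move on $\widehat{G}$, and the electrical $R$-matrix becomes the braid move that slides a strand across the crossing of two others; by Theorem~\ref{thm:RYB} these braid moves satisfy Yang--Baxter, so they generate an affine symmetric group action on medial graphs, and by Corollary~\ref{cor:resp} this action preserves the universal response matrix.

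For (1') I would run an \emph{uncrossing algorithm}. Attach to $\widehat{G}$ a complexity equal to its number of strand crossings, corrected by a term counting loops and lenses. If $\widehat{G}$ is not lenseless, I claim that after a suitable word of electrical $R$-matrix moves --- which, being braid moves, do not change the complexity --- one can bring a lens or a loop into a position where a series--parallel or loop-removal move strictly decreases the complexity. The subtle point is that a lens may wind around the cylinder, so it must first be ``unwound'' using $R$-matrices, in the same way a non-reduced word in the affine symmetric group is shortened. Since the complexity is a nonnegative integer, the algorithm terminates at a critical network electrically equivalent to $G$.

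For (3') and (5') I restrict to purely cylindrical critical $G$, where the grove expansion of \cite{KW} is available: each entry of the universal response matrix is a ratio of \emph{grove polynomials} $\sum_{F}\prod_{e\in F}c_e$ over spanning forests $F$ with prescribed boundary partition and winding, and for such $G$ these are computed combinatorially from $\widehat{G}$. The parametrization $\ph_G\colon \R_{>0}^{E(G)}\to X$ then has image a cell $C_G\cong \R_{>0}^{d_G}\times\R_{\geq 0}^{e_G}$: the open factors record the independent grove ratios, and the closed factors record conductances degenerating to $0$ or $\infty$, which carry $G$ to a smaller critical network and so stay within $X$. The heart of (3') is that the fibre of $\ph_G$ through any point is exactly one orbit of the electrical $R$-matrix action: the inclusion $\supseteq$ is Corollary~\ref{cor:resp}, while for $\subseteq$ I would give a recovery procedure that reads the conductances off the response matrix one medial strand at a time --- peeling boundary spikes and boundary-to-boundary resistances as in \cite{CIM} --- the only freedom being the order in which interior strands are peeled, which is precisely the $R$-matrix ambiguity. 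Then (5') follows by checking that the cells $C_G$, over all purely cylindrical critical medial types --- of which there are infinitely many, since the number of strands winding around the cylinder is unbounded --- are pairwise disjoint and cover $X$.

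Finally, (2') is formal: if $G$ and $G'$ have the same universal response matrix, reduce both to critical networks via (1'); these share a response matrix, so by (3') their conductances agree after an electrical $R$-matrix transformation, and hence $G$ and $G'$ are connected by the stated moves. The main obstacle is the termination step in (1') beyond the purely cylindrical case: unlike on the disk, lowering the crossing number of a medial graph on the cylinder genuinely requires interleaving braid ($R$-matrix) moves with the reducing moves, and proving that the needed braid word always exists --- the affine analogue of the fact that every wiring diagram can be brought to minimal form --- is the crux, and is exactly where the Yang--Baxter relation and affine symmetric group combinatorics are indispensable. A secondary difficulty is that, the fibres of $\ph_G$ being positive-dimensional, the injectivity available in the planar case must be replaced by the orbit analysis above; and outside the purely cylindrical class one lacks a single transparent grove formula to run the recovery against, which is also why we do not attempt a positivity characterization of the image (the analogue of part~(4) of the planar theorem).
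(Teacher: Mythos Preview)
This statement is presented in the paper as a \emph{conjecture}, not a theorem: no proof is offered, and the paper establishes only the special case of the networks $N(m)$ (Theorem~\ref{thm:sol}). Your proposal is therefore not a proof of the conjecture but an outline with acknowledged gaps --- and those gaps are genuine, which is precisely why the authors left the statement conjectural. In particular your own summary already identifies the crux: termination of the uncrossing algorithm for (1') when lenses wind around the cylinder requires knowing that a word of $R$-matrix (braid) moves can always be found to expose a reducible configuration, and that is not established anywhere in the paper or in your sketch. Your derivation of (2') from (1') and (3') also silently assumes that two critical networks with the same universal response matrix have the same underlying graph, which is part of what is being conjectured.

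Even restricted to the purely cylindrical case, your recovery step for (3') diverges from the paper's actual argument and would not work as stated. The networks $N(m)$ have no boundary spikes and no boundary-to-boundary edges, so the CIM peeling procedure you invoke simply does not apply. The paper instead recovers the edge conductances via a delicate \emph{limit} of ratios of Kenyon--Wilson grove polynomials (Theorems~\ref{thm:asw} and~\ref{thm:main}): one first uses the $R$-matrix to arrange the radii $R_1\ge\cdots\ge R_m$, then proves that a carefully chosen ratio $\uPr(\sigma_K)/\uPr(\tau_K)$ on truncations $G(N)$ tends to a single edge weight as $K,N\to\infty$, and finally strips that layer off via Lemma~\ref{lem:wh}. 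The analytic control of that limit --- Lemmas~\ref{L:short}, \ref{L:optimalpath}, \ref{L:AK} and Proposition~\ref{P:polyest} --- is the technical heart of the paper, and nothing analogous appears in your proposal.
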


The naive analogue of (4) does not hold -- see Section \ref{ssec:TNN}.  The universal response matrix in the Conjecture is the response matrix of the universal cover of the cylindrical network $G$.  Roughly speaking, it allows us to not only measure the current flowing through the boundary vertices, but also how many times the current has winded around the cylinder.  
It may be possible to formulate this in a more electrically natural way by measuring magnetic fields.  

Thus the key difference between the planar and the cylindrical cases is that even for a critical network on a cylinder the edge conductances may not be uniquely determined from 
the response matrix. This non-uniqueness comes from the existence of the electrical $R$-matrix, the action of which preserves both the underlying graph of the network and the response 
matrix, while changing the edge conductances.  The action of the electrical $R$-matrix can on the one hand be thought of as a Galois group, and on the other hand as a monodromy group.

\medskip

Our second main result is to establish the above Conjecture for a certain class of cylindrical networks we denote $N(m)$.  These critical networks can be thought of as the ``purely cylindrical'' networks.  There is no local configuration for which the star-triangle transformation can be applied in $N(m)$, but the electrical $R$-matrix generates an action of the symmetric group $S_m$.  We show in Theorem \ref{thm:sol} that for the networks $N(m)$ the edge conductances are recovered uniquely from the universal response matrix modulo this $S_m$ action, and in particular the inverse problem has generically $m!$ solutions.  One way to formulate our main conjecture is that the networks $N(m)$ exactly encapsulate the difference between the planar and cylindrical cases.

The proof of Theorem \ref{thm:sol} occupies the technical heart of the paper: we express the edge conductances as limits of certain rational functions of the universal response matrix (Theorem \ref{thm:main}).  Here we use crucially the work of Kenyon and Wilson \cite{KW,KW2}.  Kenyon and Wilson study {\it groves} in circular planar electrical networks.  These are forests whose connected components contain specified boundary vertices.  Kenyon and Wilson connect ratios of grove generating functions with the response matrix of the corresponding network.  By a careful choice of grove generating functions, we can recover the desired edge conductances.

\medskip

Recall that a matrix is {\it {totally nonnegative}} if every minor of it is nonnegative.  There is a mysterious similarity between electrical networks and a different kind of networks 
arising in the theory of { {totally nonnegative matrices}}.  In \cite{LP2}, we presented an approach to understanding this similarity via Lie theory.  Whereas the theory of total nonnegative is intimately related to the class of semisimple Lie groups (\cite{Lus}), we suggested in \cite{LP2} that a different class of ``electrical Lie groups'' is related to electrical networks.  These electrical Lie groups are certain deformed versions of the maximal unipotent subgroup of a semisimple Lie group.

The main ideas of the present work are also motivated by this analogy, though our philosophy here is more combinatorial in nature.  The construction of the electrical $R$-matrix follows the techniques developed in \cite{LP3}.  There we constructed, using purely network-theoretic methods, the geometric (or birational) $R$-matrix of a tensor product of affine geometric crystals for the symmetric power representations of $\uqsln$.  In this paper, we use electrical networks instead of the ``totally nonnegative'' networks of \cite{LP3}, but nevertheless the underlying combinatorics is developed in parallel.  We plan to expand on this analogy in \cite{LP4}.

Our formulation of Conjecture \ref{conj:gen} is also motivated by the analogy with total nonnegativity.  Indeed, analogues of (1')-(5') (and even the missing (4')!) for the {\it totally nonnegative part of the rational loop group} are established in \cite{LP,LP3}.  In particular, in \cite{LP} we studied in detail from the totally nonnegative perspective, the networks $N(m)$, or more precisely, their medial graphs.  Our solution here to the inverse problem for the networks $N(m)$ follows the strategy in \cite{LP}, where edge weights are recovered by taking limits of ratios of matrix entries; this approach was originally applied by Aissen-Schoenberg-Whitney
\cite{ASW} to classify {\it totally positive functions}.  The situation we consider here is technically much more demanding, involving rather intricate Kenyon-Wilson grove combinatorics.

\medskip
\noindent {\bf Acknowledgements.}

\noindent We cordially thank Michael Shapiro for stimulating our interest in the problem.

\section{Electrical networks}
For more background on electrical networks, we refer the reader to \cite{CIM, dVGV, KW}.

\subsection{Response matrix}
For our purposes, an electrical network is a finite weighted undirected graph $N$, where the vertex set is divided into the {\it boundary} vertices and the {\it interior} vertices.  The weight $w(e)$ of an edge is to be thought of as the conductance of the corresponding resistor, and is generally taken to be a positive real number.  A $0$-weighted edge would be the same as having no edge, and an edge with infinite weight would be the same as identifying the endpoint vertices.  

We define the {\it Kirchhoff matrix} $K(N)$ to be the square matrix with rows and columns labeled by the vertices of $N$ as follows:
$$
K_{ij} = \begin{cases} \sum_{e \; \text{joins $i$ and $j$}} w(e) &\mbox{for $i \neq j$} \\
-\sum_{e \; \text{incident to $i$}} w(e) &\mbox{for $i = j$.}
\end{cases}
$$

Let $M$ be a square $n \times n$ matrix, and $I \subset \{1,2,\ldots,n\}$.  Recall that the {\it Schur complement} $M/M_{I,I}$ is the square $n - |I|$ matrix defined to be $M_{[n]-I,[n]-I} - M_{[n]-I,I}M_{I,I}^{-1} M_{I,[n]-I}$, where $M_{J,K}$ denotes the submatrix of $M$ consisting of the rows labeled by $J$ and the columns labeled by $K$.  We define the {\it response matrix} $L(N)$ to be the square matrix with rows and columns labeled by the boundary vertices of $N$, given by the Schur complement
$$
L(N) = K/K_I
$$
where $K_I$ denotes the submatrix of $K$ indexed by the interior vertices.  The response matrix encodes all the electrical properties of $N$ that can be measured 
from the boundary vertices.  Note that our Kirchhoff and response matrices are the negative of those commonly used in the literature.

\subsection{Local electrical equivalences of networks} \label{ss:trans}

We now discuss the local transformations of electrical networks which leave the response matrix invariant.  The following proposition is well-known and can be found for example in \cite{dVGV}.

\begin{figure}[h!]
    \begin{center}
    \input{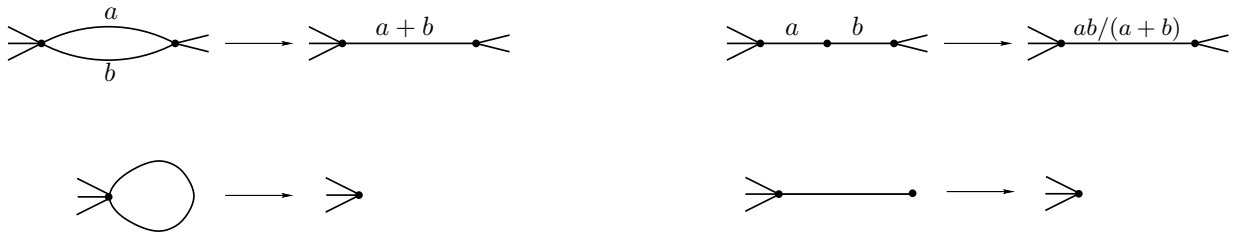}
    \end{center}
    \caption{Series-parallel transformations, loop and pendant removal.}
    \label{fig:eLie1}
\end{figure}

\begin{prop}\label{P:SP}
Series-parallel transformations, removing loops, and removing interior degree 1 vertices, do not change the response matrix of a network.  See Figure \ref{fig:eLie1}.
\end{prop}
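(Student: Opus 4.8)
The plan is to reduce everything to two observations about the Kirchhoff matrix $K(N)$: that parallel edges and loops leave $K(N)$ itself unchanged, and that a series move or a pendant removal is implemented on $K(N)$ by eliminating a single interior vertex via a rank-one Schur complement. Since $L(N) = K/K_I$, the proposition then follows from the transitivity of Schur complements (the ``quotient formula'').

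I would first dispatch the easy cases. If two edges both join $i$ and $j$, with conductances $a$ and $b$, then the defining formula records only their sum $a+b$ in the entry $K_{ij}$, and nothing in $K$ depends on the two edges separately; hence replacing them by one edge of conductance $a+b$ does not change $K(N)$, and so does not change $L(N) = K/K_I$. Likewise a loop at a vertex $v$ carries current $w(V_v - V_v)=0$ and contributes $0$ to $K(N)$, so its removal changes nothing. Neither case uses a Schur complement.

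For the remaining two moves, let $v$ be the interior vertex being removed; place it last, so $K_{\{v\}}$ is the $1\times 1$ block $(K_{vv})$. For pendant removal, $v$ has a unique neighbor $u$, joined by an edge of conductance $w$, so $K_{vv}=-w$, $K_{uv}=K_{vu}=w$, and $K_{vx}=0$ otherwise; the Schur complement $K/K_{\{v\}}$ therefore alters only the $(u,u)$ entry, by $-K_{uv}K_{vv}^{-1}K_{vu} = -w(-1/w)w = w$, which exactly cancels the $-w$ that the pendant edge had contributed to $K_{uu}$. Thus $K/K_{\{v\}}=K(N')$, where $N'$ is $N$ with the pendant vertex and edge deleted. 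For a series move, $v$ has degree $2$ with distinct neighbors $u_1,u_2$ (the case $u_1=u_2$ degenerates to the loop case already handled) and conductances $a,b$, so $K_{vv}=-(a+b)$ and $K_{vu_i}\in\{a,b\}$; the rank-one update adds $ab/(a+b)$ to the $(u_1,u_2)$ and $(u_2,u_1)$ entries and $a^2/(a+b)$, $b^2/(a+b)$ to the diagonal entries at $u_1,u_2$, and accounting for the $-a,-b$ already sitting there, one checks this is precisely the Kirchhoff matrix of the network $N'$ obtained by deleting $v$ and inserting an edge of conductance $ab/(a+b)$ between $u_1$ and $u_2$ (the parallel rule absorbing any edge already present between $u_1$ and $u_2$). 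So again $K/K_{\{v\}}=K(N')$.

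To conclude, I invoke the transitivity of Schur complements: writing the interior vertex set as $I=\{v\}\sqcup I'$, one has $K/K_I=(K/K_{\{v\}})/\bigl((K/K_{\{v\}})_{I'}\bigr)$, and moreover invertibility of $K_I$ together with $(K_I)_{\{v\}}=(-w)$ forces $(K/K_{\{v\}})_{I'}=(K_I)/(K_I)_{\{v\}}$ to be invertible, so $L(N')$ is automatically defined and equals $K(N')/K(N')_{I'}$ with $I'$ the interior set of $N'$. Hence $L(N)=K/K_I=K(N')/K(N')_{I'}=L(N')$. The argument is elementary throughout; the only places asking for care are the sign bookkeeping in the rank-one updates, the degenerate configurations (coinciding or interior neighbors, a pre-existing parallel edge at $u_1,u_2$), and checking the invertibility needed for the quotient formula — none of which poses a real obstacle.
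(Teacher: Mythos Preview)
Your argument is correct. The paper does not actually supply a proof of this proposition: it is stated as well-known with a reference to \cite{dVGV}, so there is nothing to compare against on the paper's side.

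Your Schur-complement approach is the standard one and is carried out carefully. The parallel and loop cases are indeed invisible at the level of $K(N)$ (with the usual convention that the diagonal of $K$ is fixed by the row-sum-zero condition, so a loop contributes nothing); the series and pendant cases are exactly a one-vertex Schur reduction of $K$, and the quotient formula $K/K_I = (K/K_{\{v\}})/(K/K_{\{v\}})_{I'}$ finishes the job. One small cosmetic point: when you write ``$(K_I)_{\{v\}}=(-w)$'' in the final paragraph you are tacitly in the pendant case; in the series case the block is $(-(a+b))$, but the invertibility conclusion is of course the same. Otherwise the bookkeeping, including the degenerate $u_1=u_2$ situation and the absorption of a pre-existing $u_1u_2$ edge, is handled correctly.
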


\begin{figure}[h!]
    \begin{center}
    \input{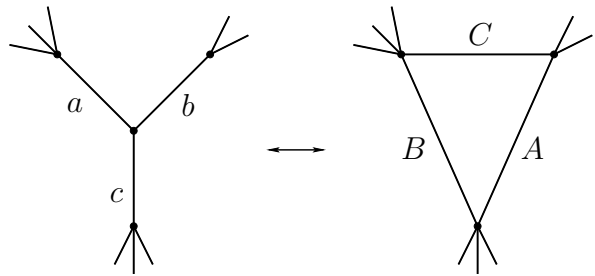}
    \end{center}
    \caption{The $Y-\Delta$, or star-triangle, transformation.}
    \label{fig:elec11}
\end{figure}

The most interesting local transformation is attributed to Kennelly \cite{Ken}.

\begin{prop}[$Y-\Delta$ transformation]
 Assume parameters $a$,$b$,$c$,$A$,$B$ and $C$ are related by 
$$A = \frac{bc}{a+b+c}, \;\; B = \frac{ac}{a+b+c}, \;\; C= \frac{ab}{a+b+c},$$
or equivalently by 
$$a = \frac{AB+AC+BC}{A}, \;\; b= \frac{AB+AC+BC}{B}, \;\; c = \frac{AB+AC+BC}{C}.$$
Then switching a local part of an electrical network between the two options shown in Figure \ref{fig:elec11} does not affect the response of the whole network.
\end{prop}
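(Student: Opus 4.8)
The plan is to reduce the assertion to a single local computation by exploiting the transitivity of Schur complements. Write $N_Y$ and $N_\Delta$ for the two networks, which agree away from the local portion of Figure~\ref{fig:elec11}. Label the three terminals $1,2,3$ so that in the star the edge of weight $a$ joins $1$ to the central vertex $0$, the edge of weight $b$ joins $2$ to $0$, and the edge of weight $c$ joins $3$ to $0$; in the triangle the edge of weight $A$ joins $2$ and $3$, the edge of weight $B$ joins $1$ and $3$, and the edge of weight $C$ joins $1$ and $2$. The vertex $0$ is interior in $N_Y$ and is incident only to $1,2,3$.

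First I would invoke the standard quotient formula for Schur complements: for disjoint index sets $J_1,J_2$ one has $K/K_{J_1\cup J_2}=(K/K_{J_1})/(K/K_{J_1})_{J_2}$ whenever the inverses involved exist. Applying this to $K=K(N_Y)$ with $J_1=\{0\}$ and $J_2$ the set of the remaining interior vertices, we may compute $L(N_Y)=K(N_Y)/K(N_Y)_I$ by first eliminating $0$ and then eliminating the rest. Since $0$ is incident only to $1,2,3$, the matrix $K(N_Y)/K(N_Y)_{\{0\}}$ agrees with $K(N_Y)$ outside the $3\times 3$ block indexed by $\{1,2,3\}$; in particular all entries recording the rest of the network are unchanged.

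Next I would perform that elimination explicitly. The contribution of the star to $K(N_Y)$, on the index set $\{1,2,3,0\}$, is the matrix with diagonal $(-a,-b,-c,-(a+b+c))$, with $K_{i0}=K_{0i}$ equal to $a,b,c$ for $i=1,2,3$, and with the other off-diagonal entries zero. Its Schur complement with respect to the $(0,0)$ entry is
\[
\begin{pmatrix} -a & 0 & 0 \\ 0 & -b & 0 \\ 0 & 0 & -c \end{pmatrix}-\frac{1}{-(a+b+c)}\begin{pmatrix} a \\ b \\ c \end{pmatrix}\begin{pmatrix} a & b & c \end{pmatrix}.
\]
An elementary simplification turns the $(1,2),(1,3),(2,3)$ entries into $\tfrac{ab}{a+b+c}=C$, $\tfrac{ac}{a+b+c}=B$, $\tfrac{bc}{a+b+c}=A$ and the diagonal entries into $-(B+C),-(A+C),-(A+B)$, which is precisely the contribution of the triangle with conductances $A,B,C$ to $K(N_\Delta)$. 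Combining this with the previous paragraph gives $K(N_Y)/K(N_Y)_{\{0\}}=K(N_\Delta)$ as matrices on the non-central vertices, and a second application of the quotient formula yields $L(N_Y)=L(N_\Delta)$. The reverse replacement is covered by the observation that the substitution $a=(AB+AC+BC)/A$, $b=(AB+AC+BC)/B$, $c=(AB+AC+BC)/C$ is inverse to the first, since $AB+AC+BC=abc/(a+b+c)$.

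The argument is elementary throughout; the only step that needs a word of care is the claim that eliminating an interior vertex supported on a local region leaves the interaction with the rest of the network untouched, so that a genuinely local replacement is legitimate. This is immediate from the quotient formula, so there is no real obstacle.
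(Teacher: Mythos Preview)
Your argument is correct. The paper itself does not give a proof of this proposition: it is stated as a classical result attributed to Kennelly and left unproved, in the same way that Proposition~\ref{P:SP} is stated without proof. So there is no ``paper's own proof'' to compare against.

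That said, the route you take---eliminating the center vertex first via the quotient formula for Schur complements and checking that the resulting $3\times 3$ block matches the Kirchhoff contribution of the triangle---is exactly the standard verification, and it is carried out cleanly. One small clarification worth making explicit: when you write ``the contribution of the star to $K(N_Y)$ on the index set $\{1,2,3,0\}$'', the full $\{1,2,3\}$ block of $K(N_Y)$ may also contain terms from edges of the ambient network incident to $1,2,3$. Your argument still goes through because the Kirchhoff matrix is additive over edges and the rank-one correction from eliminating $0$ touches only the star's contribution; you gesture at this in your last paragraph, but it would read more smoothly if stated where the $4\times 4$ block is introduced.
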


\subsection{Tetrahedron relation for electrical networks}

\begin{prop}[Tetrahedron relation] \label{prop:FM}
 The sequence of $Y-\Delta$ transformations shown in Figure \ref{fig:elec7} returns the conductances of edges to their original values.
\end{prop}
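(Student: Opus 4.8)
The plan is to deduce the tetrahedron relation from two facts already in hand: the $Y-\Delta$ transformation preserves the response matrix (the preceding proposition), and on a \emph{critical} circular planar network the edge conductances are recovered uniquely from the response matrix (part (3) of the theorem quoted in the introduction, i.e.\ \cite[Theorem 2]{CIM} or \cite[Th\'{e}or\`{e}me 3]{dVGV}). Once the combinatorics of Figure \ref{fig:elec7} is pinned down, the argument becomes essentially formal.

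First I would read off Figure \ref{fig:elec7} carefully: verify that each transformation in the displayed sequence is a legitimate $Y-\Delta$ move applied to a genuine local star (resp.\ triangle), that the intermediate graphs are as the figure claims, and — crucially — that after the last move the underlying weighted graph, together with its partition into boundary and interior vertices, is \emph{literally the same graph $G$} we started with. This is pure bookkeeping with no conductance computation. Writing $N$ for the initial network and $N'$ for the network obtained after the whole sequence, we conclude that $N$ and $N'$ have the same underlying graph $G$ and differ at most in their edge weights.

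Next, since each $Y-\Delta$ step leaves the response matrix unchanged, $L(N') = L(N)$. The graph $G$ in Figure \ref{fig:elec7} is critical — visible directly from its medial graph, which is lenseless — so the recovery theorem applies and the edge conductances of a critical network are a function of its response matrix. Hence the conductances of $N'$ equal those of $N$, which is precisely the statement of the proposition.

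The only delicate points are (a) confirming that the prescribed sequence really does close up to the original $G$ (an inspection of the figure), and (b) checking the citation is not circular, i.e.\ that the proof of the Curtis--Ingerman--Morrow / de Verdi\`ere--Gitler--Vertigan recovery theorem does not itself invoke the tetrahedron relation. If one wants to sidestep this, the relation can instead be proved by brute force: parametrize the conductances of $G$, express each $Y-\Delta$ move in the sequence (and any parallel-edge merge it produces) as an explicit birational map via $A = bc/(a+b+c)$, $B = ac/(a+b+c)$, $C = ab/(a+b+c)$ together with the inverse formulas, compose these in order, and simplify. Since $G$ has only a handful of edges this is a short chain of rational substitutions; the one thing to watch is which three edges play the roles of $a,b,c$ (or $A,B,C$) at each step, which is again dictated by Figure \ref{fig:elec7}. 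Either way the content is the same: the displayed moves form a loop in the space of networks with underlying graph $G$, and on such a graph the response matrix determines the weights.
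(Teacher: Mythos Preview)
Your proposal is correct, and your fallback route (explicit composition of the birational $Y$--$\Delta$ maps) is exactly what the paper does: its entire proof is the two words ``Direct computation.''

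Your primary route, via the response matrix and the uniqueness-of-recovery theorem for critical circular planar networks, is a genuinely different argument. It is more conceptual and avoids symbolic manipulation entirely: once one checks that the sequence in Figure~\ref{fig:elec7} closes up on the same underlying (boundary-labeled) graph $G$ and that $G$ is critical, the identity of conductances is forced. The advantage is that it explains \emph{why} the relation holds rather than merely verifying it, and it generalizes immediately to any closed loop of $Y$--$\Delta$ moves on a critical graph. The cost is the extra verification you flag: that $G$ is critical (a quick medial-graph check) and that the cited recovery theorem is not itself proved using the tetrahedron relation (it is not---the Curtis--Ingerman--Morrow and de~Verdi\`ere--Gitler--Vertigan arguments proceed by boundary-spike and boundary-edge reductions). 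The paper's direct computation, by contrast, is self-contained and needs no external input, which is presumably why the authors chose it for a result they use as a basic tool.
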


\begin{figure}[h!]
    \begin{center}
    \input{elec7.pstex_t}
    \end{center}
    \caption{}
    \label{fig:elec7}
\end{figure}

\begin{proof}
 Direct computation.
\end{proof}

\begin{remark}
In the terminology 
of Kashaev, Korepanov, and Sergeev \cite{KKS},  the proposition states that the $Y-\Delta$ transformation is a solution of type ($\epsilon$) to the {\it {functional tetrahedron equation}}.
\end{remark}

\section{Kenyon-Wilson $L$-polynomials}
\label{sec:KWpoly}
\subsection{Groves}
A {\it planar partition} of $[n] = \{1,2,\ldots,n\}$ is a partition $\tau = \{\tau_1,\tau_2,\ldots,\tau_k\}$ such that there do not exist $i < j < k < \ell$ such that $i,k$ belong to the same part of $\tau$, and $k,\ell$ belong to the same part.

A {\it circular planar electrical network} $N$ is an electrical network embedded into a disk, so that the intersection of $N$ with the boundary of the disk is exactly the boundary vertices of $N$.  We suppose that the boundary vertices of $N$ are exactly $[n]$, and that these vertices are arranged in order around the boundary of the disk.  

A {\it grove} in $N$ is a spanning forest where each connected component intersects the boundary.  A grove $\Gamma$ has boundary planar partition $\tau = \{\tau_1,\tau_2,\ldots,\tau_k\}$ if the connected components $\Gamma = \sqcup \Gamma_i$ of $\Gamma$ are such that $\Gamma_i$ contains the boundary vertices labeled by $\tau_i$.  The {\it weight} of the grove is the product of the weights of its edges.  Kenyon and Wilson \cite{KW} study the probability $\Pr(\tau)$ that a random grove of $N$ is of type $\tau$, where the probability of a grove is proportional to its weight.

Let ``$\uncrossing$'' denote the partition of $[n]$ into singletons.

\begin{theorem}\label{T:upr} \cite[Theorem 1.1, Lemma 4.1]{KW}
Let $G$ be a finite circular planar electrical network.
\begin{enumerate}
\item
The ratio $\uPr(\tau):=\Pr(\tau)/\Pr(\uncrossing)$
is an integer-coefficient polynomial in the $L_{ij}(G)$, homogeneous of degree $n-\# \text{parts of}\,\,\tau$.
\item
Suppose $\tau$ be a planar partition with parts of size at most two.  Then the polynomial of (1) depends only on $L_{ij}(G)$, for $i,j$'s which are not isolated parts of $\tau$.
\end{enumerate}
\end{theorem}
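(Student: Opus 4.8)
The plan is to reduce the statement to the (all-minors) matrix--tree theorem together with Schur complement identities, following the strategy of Kenyon and Wilson. Write $K=K(G)$ with rows and columns indexed by $V=[n]\sqcup I$, where $I$ is the set of interior vertices, so that $K_I=K_{I,I}$. For a planar partition $\tau$ put $Z(\tau)=\sum_\Gamma\wt(\Gamma)$, the sum over groves $\Gamma$ of type $\tau$; then $\uPr(\tau)=Z(\tau)/Z(\uncrossing)$, so it is enough to study this ratio. A grove of type $\tau$ is a spanning forest with $\#(\text{parts of }\tau)$ components and hence $|V|-\#(\text{parts of }\tau)$ edges, so $Z(\tau)$ is homogeneous of degree $|V|-\#(\text{parts of }\tau)$ in the edge weights and $\uPr(\tau)$ is homogeneous of degree $n-\#(\text{parts of }\tau)$. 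Since each $L_{ij}$ will be identified below with $\uPr(\sigma_{ij})$, where $\sigma_{ij}$ is the partition with the single block $\{i,j\}$ and all other blocks singletons, $L_{ij}$ is homogeneous of degree $1$ in the edge weights; hence once $\uPr(\tau)$ is shown to be a polynomial in the $L_{ij}$, that polynomial is forced to be homogeneous of degree $n-\#(\text{parts of }\tau)$, which gives the degree assertion in (1).

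I would first settle two base cases. By the matrix--tree theorem with root set $[n]$, groves of type $\uncrossing$ are exactly the spanning forests with one boundary vertex in each component, so $Z(\uncrossing)=(-1)^{|I|}\det K_I$. Next, for $i\neq j$ apply the all--minors matrix--tree theorem to the minor $\det K_{I\cup\{i\},\,I\cup\{j\}}$, i.e. with deleted rows $[n]\setminus\{i\}$ and deleted columns $[n]\setminus\{j\}$: a brief connectivity argument forces every contributing linking forest to pair each vertex of $[n]\setminus\{i,j\}$ with itself, hence to put $i$ and $j$ in one component and every other boundary vertex in a singleton component, always with the same sign; these forests are precisely the groves of type $\sigma_{ij}$. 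So $Z(\sigma_{ij})=\pm\det K_{I\cup\{i\},\,I\cup\{j\}}$, and Jacobi's identity for Schur complements, with the symmetry of $K$, gives
$$
\uPr(\sigma_{ij})\;=\;\frac{Z(\sigma_{ij})}{Z(\uncrossing)}\;=\;\pm\,\frac{\det K_{I\cup\{i\},\,I\cup\{j\}}}{\det K_I}\;=\;\pm\,(K/K_I)_{ij}\;=\;\pm\,L_{ij}(G),
$$
the sign being $+$ because both ends of the chain are nonnegative for positive edge weights. In particular (1) holds when $\tau=\sigma_{ij}$.

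For a general planar partition $\tau$ I would induct on $d:=n-\#(\text{parts of }\tau)$, the base case $d=0$ being $\tau=\uncrossing$ with $\uPr=1$. In the inductive step pick a block $P$ of $\tau$ with $|P|\geq 2$ and a cyclically extreme element $i$ of $P$; planarity of $\tau$ ensures that splitting $i$ off from $P$, and more generally reattaching $i$ to any single other element of $P$, again produces planar partitions, each with one more part than $\tau$ and hence of degree $d-1$. The core task is to establish a Kenyon--Wilson--type linear recursion writing $Z(\tau)$ as a $\Z$-linear combination $\sum_\ell\bigl(\text{a degree-one monomial in the }K\text{-minors above}\bigr)\,Z(\tau^{(\ell)})$ over these smaller partitions $\tau^{(\ell)}$ --- obtained by peeling $i$ off the tree component of a grove that contains $P$ and expanding that tree's weight via the matrix--tree theorem. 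Dividing by $Z(\uncrossing)$, rewriting each degree-one coefficient via the $L_{ij}$ exactly as in the base case, and applying the inductive hypothesis to the $\tau^{(\ell)}$, one obtains that $\uPr(\tau)$ is an integer polynomial in the $L_{ij}$ of the claimed degree. I expect the real obstacle to be precisely this step: setting up the recursion, identifying which planar partitions occur, checking that all denominators cancel (integrality), and controlling the planarity bookkeeping --- this is the technical heart of \cite{KW}.

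For (2), observe that when every block of $\tau$ has size at most $2$ the recursion above never touches an isolated block: peeling a matched vertex $i$ out of its two-element block $\{i,k\}$ only introduces the coefficient $L_{ik}$ and resolves that block in one step, after which $i$ and $k$ are singletons that are never revisited, while an isolated block $\{m\}$ is left untouched at every stage, so no entry $L_{m\bullet}$ ever appears. Hence $\uPr(\tau)$ depends only on the entries $L_{ij}$ for which neither $\{i\}$ nor $\{j\}$ is an isolated block of $\tau$, which is (2). (Equivalently, in the size-$\leq 2$ case one can exhibit an explicit determinant in the variables $\{L_{ij}:i,j\text{ matched}\}$ equal to $\uPr(\tau)$, whose entries then visibly exclude the isolated indices.)
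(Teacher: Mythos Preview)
The paper does not supply its own proof of this theorem: it is stated with the citation \cite[Theorem 1.1, Lemma 4.1]{KW} and used as a black box. So there is no in-paper argument to compare your proposal against; the relevant comparison is with Kenyon--Wilson's actual proof, pieces of which the paper does summarize immediately after the statement (the forest polynomials $L_\tau$ of \eqref{E:forest}, Rule~1, and the expansion $\uPr(\sigma)=\sum_\tau P_{\sigma,\tau}L_\tau$).

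Your proposal is a sketch with a self-acknowledged gap at the crucial point. The base cases ($\tau=\uncrossing$ and $\tau=\sigma_{ij}$) are fine, and the homogeneity bookkeeping is correct. But the inductive step---``establish a Kenyon--Wilson--type linear recursion writing $Z(\tau)$ as a $\Z$-linear combination of $Z(\tau^{(\ell)})$ with degree-one $K$-minor coefficients, obtained by peeling $i$ off''---is exactly the content of the theorem, and you do not carry it out. Peeling a leaf off a single tree via the matrix--tree theorem does not obviously produce, after dividing by $Z(\uncrossing)$, coefficients that are \emph{integer} polynomials in the $L_{ij}$ rather than rational functions; nor is it clear that only \emph{planar} partitions $\tau^{(\ell)}$ arise, or that the signs and multiplicities organize into an integral recursion. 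You flag all of this yourself (``I expect the real obstacle to be precisely this step\ldots this is the technical heart of \cite{KW}''), so as written the proposal is not a proof.

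For orientation, Kenyon--Wilson's route is rather different from the induction you outline. They work in the free $\Z$-module on partitions, define $L_\tau$ by \eqref{E:forest} (a sum over spanning forests of the complete graph on $[n]$), and show that the change-of-basis matrix from the $L_\tau$'s to the $\uPr(\sigma)$'s is unitriangular over $\Z$ with respect to a refinement order; Rule~1 is used to rewrite nonplanar $L_\tau$'s in the planar basis. Integrality then comes for free from unitriangularity rather than from tracking cancellations in a peeling recursion. For part~(2), their Lemma~4.1 gives an explicit closed formula (a signed sum over planar pairings, i.e.\ a Pfaffian-type expression in the $L_{ij}$ restricted to the matched indices), which immediately shows that isolated vertices do not appear; your parenthetical ``equivalently, one can exhibit an explicit determinant\ldots'' is pointing at this, and making that precise would in fact settle~(2) independently of the missing induction.
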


\subsection{A bound on certain Kenyon-Wilson polynomials}

Recall from \cite{KW} that for a partition $\tau$, we define
\begin{equation}\label{E:forest}
L_\tau = \sum_F \prod_{\{i,j\} \in F} L_{i,j}
\end{equation}
where the sum is over spanning forests $F$ of the complete graph, for which the trees of $F$ span the parts of $\tau$, and the product is over the edges of $F$.

Let's recall {\bf Rule 1} from \cite[p.5]{KW}.  If a partition $\tau$ is non-planar then one can pick $a < b < c < d$ so that $a,c$ belong to one part of $\tau$, and $b,d$ belong to another part of $\tau$.  Arbitrarily subdivide the part containing $a$ and $c$ into two sets $A$ and $C$ so that $a \in A$ and $c \in C$, and similarly obtain $B$ and $D$.  Denote the remaining parts of the partition $\tau$ by ``rest''.  Then the rule is
\begin{align*}
AC|BD|\rest \to A|BCD|\rest + B|ACD|\rest + C|ABD|\rest+ D|ABC|\rest \\
\qquad -AB|CD|\rest - AD|BC|\rest.
\end{align*}
Iterating this rule transforms each non-planar partition $\tau$ into a linear combination of planar ones, and Kenyon and Wilson show that the coefficients do not depend on how Rule 1 is applied.  Denote by $P_{\sigma,\tau}$ the coefficient of a planar partition $\sigma$ in the application of Rule 1 to $\tau$.  

\begin{theorem}[{\cite[Theorem 1.2]{KW}}]
We have $\uPr(\sigma) = \sum_\tau P_{\sigma,\tau}\,L_\tau$.
\end{theorem}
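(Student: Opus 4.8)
The plan is to prove the formula by induction, the central ingredient being a recursion that expresses $\uPr(\sigma)$, for a general planar $\sigma$, through $\uPr$ of strictly simpler partitions together with products of entries $L_{ij}$, and then to show that $\sum_\tau P_{\sigma,\tau}L_\tau$ obeys the very same recursion. Both sides are homogeneous of the same degree $d = n-\#\{\text{parts of }\sigma\}$ in the $L_{ij}$: the left by Theorem~\ref{T:upr}, and the right because Rule~1 preserves the number of parts, so $P_{\sigma,\tau}\ne 0$ forces $\#\{\text{parts of }\tau\}=\#\{\text{parts of }\sigma\}$ and then $L_\tau$ is likewise homogeneous of degree $d$. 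So the induction is on $d$. The base case $d=0$ is $\sigma=\uncrossing$: here $\uPr(\uncrossing)=1=L_{\uncrossing}$ (the empty spanning forest of $K_n$), and $P_{\uncrossing,\tau}=\delta_{\uncrossing,\tau}$ since $\uncrossing$ is planar. The case $d=1$, with $\sigma=\{i,j\}\,|\,\text{singletons}$, is the electrical meaning of the response matrix: by the all-minors matrix--tree theorem applied to the Schur complement $L=K/K_I$, the total weight of groves of $N$ in which $i$ and $j$ lie in one tree while every other boundary vertex is an isolated root equals $L_{ij}$ times the total weight of uncrossing groves, so $\uPr(\{i,j\}\,|\,\cdots)=L_{ij}=L_{\{i,j\}\,|\,\cdots}$.

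The heart is the recursion. On the grove side I would produce it by a deletion/contraction on $N$ (or an equivalent local boundary modification, in the style of \cite{KW}): fix a part $R$ of $\sigma$ with $|R|\ge 2$ and two elements $i\ne j$ of $R$ consecutive among the elements of $R$ in the cyclic order on $[n]$, and expand the random grove so that a factor governed by $L_{ij}$ splits off while the remaining contributions are $\uPr$ of partitions that are strictly finer than $\sigma$ or arise from $\sigma$ by an uncrossing step --- in either case covered by the inductive hypothesis. On the combinatorial side I would peel the edge $\{i,j\}$ off the spanning forests of $K_n$ summed by each $L_\tau$: such a forest either uses $\{i,j\}$, contributing $L_{ij}$ times a forest for the partition in which $i$ and $j$ have been pre-merged, or it does not; then one checks that running Rule~1 on the two resulting families reproduces, monomial by monomial and sign by sign, the grove-side recursion --- in particular the two negative terms $-AB|CD|\rest-AD|BC|\rest$ of Rule~1. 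Here I would invoke the already-quoted fact that the coefficients $P_{\sigma,\tau}$ do not depend on the order in which Rule~1 is applied, which is what lets the two recursions be compared unambiguously.

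The step I expect to be the genuine obstacle is this last matching of signs. The grove-side expansion naturally produces partitions that may be non-planar --- hence outside the index set of $\uPr$ --- so one must show that their Rule~1 reductions reassemble precisely the forest-level bookkeeping of the $L_\tau$'s. I would attack this with a sign-reversing involution on pairs consisting of a spanning forest of $K_n$ together with a partial routing of that forest through $N$, the forest analogue of the Lindström--Gessel--Viennot cancellation that already disposes of the pair-partition case; the delicate point is to organize "strictly finer, or obtained by an uncrossing step" into a genuinely well-founded order so that the recursion terminates. One could alternatively argue formally: response matrices of top-dimensional critical circular planar networks are Zariski-dense in the space of symmetric $n\times n$ matrices with vanishing row sums, so it suffices to verify the identity as one of polynomials in the free entries $L_{ij}$, for instance by degenerating conductances to $0$ or $\infty$ and inducting on $n$; but controlling how Rule~1 behaves under such degenerations appears comparably delicate, so I would expect the combinatorial route to be the one that succeeds.
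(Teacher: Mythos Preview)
This theorem is not proved in the present paper: it is quoted verbatim as \cite[Theorem 1.2]{KW} and used as a black box. There is therefore no ``paper's own proof'' to compare against; any comparison must be with Kenyon and Wilson's original argument.

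What you have written is a strategy sketch, and you yourself flag the gap: the ``matching of signs'' step, where the grove-side recursion is supposed to line up monomial-by-monomial with the forest-side bookkeeping after Rule~1 reductions, is asserted but not carried out. The sign-reversing involution you propose is not constructed, and the well-founded order on which the recursion would terminate is not specified. As it stands, this is an outline of where a proof might live rather than a proof.

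It is also worth noting that your organizing principle differs from Kenyon--Wilson's. Their argument does not induct on the degree $d$ of a fixed planar $\sigma$ and peel off an $L_{ij}$ factor. Rather, they extend $\uPr$ formally to \emph{all} partitions (planar or not) via Rule~1, and then prove the single identity $\uPr(\tau)=L_\tau$ for every $\tau$ simultaneously, by showing that both sides obey the same recursion under a boundary operation (gluing two adjacent boundary vertices, which on the $L$-side is a rank-one update of the response matrix and on the grove side is a combinatorial identity). The stated formula for planar $\sigma$ then falls out by writing $L_\tau=\uPr(\tau)=\sum_\sigma P_{\sigma,\tau}\,\uPr(\sigma)$ and inverting in the basis of planar partitions --- wait, that is the wrong direction; more precisely, the identity $\uPr(\tau)=L_\tau$ for all $\tau$ together with the fact that Rule~1 holds as an identity among the $L_\tau$ gives the claimed expansion directly. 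The point is that working with all $\tau$ at once, rather than isolating a planar target $\sigma$, sidesteps exactly the sign-matching difficulty you anticipate. If you want to complete your approach, you would likely end up reconstructing this uniform identity anyway.
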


\begin{lemma}\label{L:singles}
Let $S$ be a cyclic interval.
Suppose that a planar partition $\sigma$ is such that no part $p$ of $\sigma$ contains two elements of $S$.  Let $\tau$ be a possibly nonplanar partition such that $\sigma$ occurs in the expansion of $\tau$ under repeated application of Rule 1 (until no more applications are possible).  Then $\tau$ does not contain any part $p$ such that $|p \cap S| \geq 2$. 
\end{lemma}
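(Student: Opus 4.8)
The plan is to argue by contradiction, tracking a single "bad" part through the iterated application of Rule 1. Suppose $\tau$ is a partition with a part $p$ satisfying $|p\cap S|\geq 2$, where $S$ is a cyclic interval, and suppose that some planar partition $\sigma$ with no part meeting $S$ in two or more elements nevertheless appears in the full Rule 1 expansion of $\tau$. Since Kenyon and Wilson show that the result of iterating Rule 1 is independent of the order of application, we are free to choose a convenient order: we will always apply Rule 1 so as to "protect" the elements of $S$. The key is a monovariant-style invariant: define, for a partition $\pi$, the quantity $\mu(\pi) = \sum_{q\in\pi}\binom{|q\cap S|}{2}$, the total number of pairs of elements of $S$ lying in a common part. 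We want to show $\mu$ can never decrease to $0$ once it is positive, under a suitably chosen strategy for applying Rule 1 — in fact the cleaner claim is that \emph{if} $\mu(\pi)>0$ then Rule 1 can be applied so that every term in the output also has $\mu>0$, and moreover that from any partition with $\mu>0$ one cannot reach, via any legal sequence, a partition with $\mu=0$; combined with order-independence this forces $\mu(\sigma)>0$, contradicting the hypothesis on $\sigma$.

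First I would set up the single Rule 1 step in terms of $S$. A step takes $AC\mid BD\mid\rest$ (with $a<b<c<d$, $a\in A$, $c\in C$, $b\in B$, $d\in D$) to the six terms $A\mid BCD\mid\rest$, $B\mid ACD\mid\rest$, $C\mid ABD\mid\rest$, $D\mid ABC\mid\rest$, $-AB\mid CD\mid\rest$, $-AD\mid BC\mid\rest$. I would examine how the part $p$ — say $p = AC$ with $|p\cap S|\geq 2$, or $p$ is one of the untouched parts in $\rest$ — fares. If $p$ is untouched it stays, and $\mu$ contribution from it is preserved, so the only danger is when $p$ is the part being split. If $p=AC$ with $|p\cap S|\geq 2$, then since $S$ is a cyclic interval and $a\in A$, $c\in C$ with $b,d$ between them in cyclic order separating $A$ from $C$, I would argue that the two elements of $p\cap S$ cannot be split one into $A$ and one into $C$ while keeping $S$ an interval — more precisely, I would choose the subdivision of the splittable part so that all of $p\cap S$ lands in one of the two pieces (this is where the cyclic-interval hypothesis is used: an interval cannot be separated by the cross-pair $b,d$ into both the "$a$-side" and the "$c$-side"). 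Then in the six output terms, the piece containing $p\cap S$ is always combined with other sets but never further separated within this step, so each output term still has a part containing all of $p\cap S$, hence $\mu>0$ is preserved term-by-term.

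The main obstacle I anticipate is the bookkeeping needed to verify that the freedom in choosing $A,B,C,D$ (the arbitrary subdivisions in Rule 1) really does let us always keep $p\cap S$ together, \emph{and} that this choice is consistent with eventually reaching a \emph{planar} partition so that the process terminates — i.e., that restricting to "$S$-protecting" subdivisions does not stall the reduction. Here I would invoke the Kenyon–Wilson termination result: Rule 1 iterated in \emph{any} order terminates at a linear combination of planar partitions, so in particular our protecting strategy terminates, and by order-independence the resulting linear combination is the same one in which $\sigma$ was assumed to appear. Since every partition produced along our chosen path has $\mu>0$ (by the inductive step above applied repeatedly, noting that the cyclic interval $S$ and the "part carrying $S\cap p$" property propagate), the final planar partitions all have a part containing two elements of $S$; thus $\sigma$ cannot be among them, the contradiction we sought. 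I would present the inductive step carefully and leave the order-independence and termination inputs as citations to Theorem stated above and to \cite{KW}.
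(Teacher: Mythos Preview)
Your overall strategy matches the paper's: argue by contrapositive, exploit the order-independence of Rule 1 to choose the subdivisions wisely, and show that the property ``some part meets $S$ in at least two elements'' propagates to every output term.

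There is a real gap in your case analysis, though. You claim that when the bad part $p$ is the one being split as $A \cup C$, you can always choose the subdivision so that all of $p \cap S$ lands in one of $A$ or $C$. But the subdivision is constrained by $a \in A$ and $c \in C$, so if $a$ and $c$ both lie in $S$ you are forced to separate them. Your parenthetical about the cross-pair $b,d$ separating the interval is the right ingredient but draws the wrong conclusion: what the cyclic-interval hypothesis actually gives you is that if $a,c \in S$ then (since $a<b<c<d$ cyclically) $b$ or $d$ lies in $S$ as well. At that point you cannot keep $p \cap S$ together, and one must inspect the six output terms directly: with, say, $a,b,c \in S$, each of $BCD$, $ACD$, $ABD$, $ABC$, $AB$, $BC$ contains at least two of $a,b,c$, so every term still has a part meeting $S$ twice. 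This direct verification is precisely what the paper does, and it is the step your outline is missing. (The symmetric case where the bad part is $BD$ rather than $AC$ also needs a word, but that is routine.)
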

\begin{proof}
By \cite[Theorem 1.2]{KW}, the result of repeatedly applying Rule 1 does not depend on the choices made when applying Rule 1.  Let us suppose that $\tau$ contains a part $p$ such that $|p \cap S| \geq 2$.  In applying Rule 1, if we ever encounter that $a,c \in p$, we will first try to choose $A$ and $C$ so that $|A \cap S| \geq 2$ or $|C \cap S| \geq 2$.  This would guarantee that all the partitions occurring in Rule 1 contain some part $p'$ such that $|p' \cap S| \geq 2$.  This choice is impossible only if $a,c \in S$, which would in turn imply that $b$ or $d$ lies in $S$.  But in this case again all the partitions occurring in Rule 1 contain some part $p'$ such that $|p' \cap S| \geq 2$.  Thus $\sigma$ cannot occur in the expansion of $\tau$.
\end{proof}

\begin{lemma}\label{L:combtype}
Suppose $\sigma$ is a partition such that the non-singleton parts of $\sigma$ contain at most $K$ elements.  Then there is a constant $c_K$ such that when $\uPr(\sigma)$ is expanded as a polynomial in the $L_{ij}$'s the coefficient of any monomial in the $L_{ij}$'s is less than or equal to $c_K$.
\end{lemma}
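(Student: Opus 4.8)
The plan is to reduce the assertion to a uniform bound on the Kenyon--Wilson coefficients $P_{\sigma,\tau}$, and then to show that only ``small'' partitions $\tau$ contribute and that each relevant $P_{\sigma,\tau}$ depends only on a bounded amount of combinatorial data. First I would dispose of the trivial cases: if $\sigma$ is non-planar then $\Pr(\sigma)=0$ and hence $\uPr(\sigma)=0$, and if $\sigma$ has no non-singleton part then $\uPr(\sigma)=1$; so assume $\sigma$ is planar with at least one non-singleton part. Let $T$ be the union of the non-singleton parts of $\sigma$ (so $|T|\le K$), let $j\ge 1$ be the number of non-singleton parts, so that $\sigma$ has $m:=j+(n-|T|)$ parts in all, and recall from \cite[Theorem 1.2]{KW} that $\uPr(\sigma)=\sum_\tau P_{\sigma,\tau}\,L_\tau$. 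By \eqref{E:forest}, each $L_\tau$ is a sum over spanning forests $F$ of the complete graph whose components are the parts of $\tau$; distinct such $F$ have distinct edge sets, hence give distinct squarefree monomials each with coefficient $1$, and every monomial of $L_\tau$ has degree $n-\#\text{parts}(\tau)$. Consequently, for a forest $F$ on $[n]$ the coefficient of $\prod_{\{i,j\}\in F}L_{ij}$ in $\uPr(\sigma)$ equals $P_{\sigma,\tau(F)}$, where $\tau(F)$ is the partition of $[n]$ into the connected components of $F$, while every non-squarefree monomial and every squarefree monomial whose support contains a cycle has coefficient $0$. Thus it suffices to bound $|P_{\sigma,\tau}|$ by a constant depending only on $K$, uniformly in $n$, $\sigma$, and $\tau$.

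Next I would observe that very few $\tau$ contribute. Since $\uPr(\sigma)$ is homogeneous of degree $n-m$ by Theorem \ref{T:upr}(1), comparing degrees in $\uPr(\sigma)=\sum_\tau P_{\sigma,\tau}L_\tau$ forces $\#\text{parts}(\tau)=m$ whenever $P_{\sigma,\tau}\ne 0$, so $\sum_{p\in\tau}(|p|-1)=n-m=|T|-j\le K-1$. In particular such a $\tau$ has at most $K-1$ non-singleton parts, and the set $T'$ of elements lying in non-singleton parts of $\tau$ satisfies $|T'|\le 2(K-1)$. (Alternatively, Lemma \ref{L:singles}, applied with $S$ running over the cyclic gaps determined by $T$, shows directly that each individual part of a contributing $\tau$ is small; but the degree count already gives everything needed.)

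Now comes the main point, localization. Put $W:=T\cup T'$, a cyclically ordered subset of $[n]$ with $|W|\le 3K$. Every non-singleton part of $\sigma$ lies in $T\subseteq W$ and every non-singleton part of $\tau$ lies in $T'\subseteq W$. Since an application of Rule~1 never touches a singleton part, and since the parts it produces are unions of subsets of the two parts on which it acts, the entire iterated Rule~1 reduction of $\tau$ can be carried out without ever disturbing a vertex of $[n]\setminus W$: whenever the current partition is non-planar its crossing involves two non-singleton parts, which are descendants of non-singleton parts of $\tau$ and hence contained in $W$, so one may always resolve such a crossing, and every partition arising agrees with the all-singletons partition outside $W$. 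By the choice-independence of the reduction (\cite[Theorem 1.2]{KW}), $P_{\sigma,\tau}=P_{\sigma|_W,\tau|_W}$, where $\sigma|_W$ and $\tau|_W$ are the partitions of $W$ obtained by keeping the non-singleton parts and declaring every other element a singleton; note that $\sigma|_W$ is again planar. Finally, up to cyclic-order-preserving relabeling there are only finitely many pairs $(\rho,\pi)$ of partitions of a cyclically ordered set of size at most $3K$; taking $c_K$ to be the maximum of $1$ and of $|P_{\rho,\pi}|$ over all such pairs with $\rho$ planar yields the desired bound.

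The step I expect to be the main obstacle is the localization identity $P_{\sigma,\tau}=P_{\sigma|_W,\tau|_W}$: one must check carefully that the Rule~1 reduction of $\tau$ genuinely never requires touching vertices outside $W$, and that this identification of coefficients is fully compatible with the choice-independence statement of Kenyon and Wilson. Everything else is bookkeeping with the homogeneity of the $L$-polynomials and with the size estimates coming from the degree count (and, if one prefers, from Lemma \ref{L:singles}).
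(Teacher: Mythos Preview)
Your proof is correct and follows the same overall architecture as the paper's: reduce to bounding $|P_{\sigma,\tau}|$, argue that $P_{\sigma,\tau}$ depends only on a bounded amount of combinatorial data (your ``localization'' is exactly the paper's ``combinatorial type''), and conclude by finiteness. The one substantive difference is the tool used to bound the contributing $\tau$'s. The paper invokes Lemma~\ref{L:singles} to show that no part of a contributing $\tau$ meets any cyclic gap of $S\setminus T$ twice, and deduces from this that there are only boundedly many combinatorial types. You instead use the homogeneity of $\uPr(\sigma)$ (Theorem~\ref{T:upr}(1)) to force $\#\mathrm{parts}(\tau)=\#\mathrm{parts}(\sigma)$, which immediately gives $|T'|\le 2(K-1)$ and hence a bound on $|W|$. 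Your route is arguably more direct: it yields an explicit size bound on the localized ground set in one line, whereas extracting finiteness purely from Lemma~\ref{L:singles} still seems to require the same part-count constraint (Rule~1 preserves the number of parts) to rule out $\tau$'s with many small non-singleton parts spread across the arcs. On the other hand, the paper's use of Lemma~\ref{L:singles} gives structural information about where the elements of $T'$ can sit relative to $T$, which is what is actually needed later in Proposition~\ref{P:polyest}. Your observation that in fact $T\subseteq T'$ (since Rule~1 can only shrink the non-singleton support) is a nice sharpening and makes the localization identity $P_{\sigma,\tau}=P_{\sigma|_W,\tau|_W}$ completely transparent.
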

\begin{proof}
Denote the set of boundary vertices by $S$, and by $T \subset S$ the elements in the non-singleton parts of $\sigma$.  Applying Lemma \ref{L:singles}, we see that every $\tau$, such that $\sigma$ occurs in the Rule 1 expansion of $\tau$, have parts that contain at most one element from each cyclic component of $S - T$.  Since the number of cyclic components is bounded by $K$, we see that there is a bound, depending only on $K$, on the number of combinatorial types of possible $\tau$'s.  Here combinatorial type means the partition one obtains when singletons in $S-T$ are removed, and only the relative orders of the remaining elements are remembered.  It follows that one can find a constant $c_K$ so that the coefficient of $\sigma$ in the Rule 1 expansion of $\tau$ is bounded by $c_K$, for any $\tau$.

Next we note that for the sum of \eqref{E:forest}, the edges of $F$ determine $F$, which in turn determines $\tau$.  So each monomial in the $L_{ij}$'s occurs in at most one $L_\tau$.  Thus the coefficient of any monomial in the $L_{ij}$'s is bounded by $c_K$.
\end{proof}

\section{Electrical $R$-matrix}
The ideas of this section follow closely the calculation of the  ``whurl transformation'' in \cite{LP3}.  In a special case the whurl transformation reduces to the {\it birational}, or {\it geometric $R$-matrix} of certain affine geometric crystals.  This motivates the terminology of an {\it electrical $R$-matrix}.

\subsection{The $R$-transformation}\label{ssec:R}
Fix $n \geq 1$ and $m \geq 1$.  We define an electrical network denoted $N(m)$ which is embedded in a cylinder.  It has $2n$ boundary vertices all lying on the boundary of the cylinder,
 with $1,2,\ldots,n$ on the left boundary component and $1',2',\ldots,n'$ on the right boundary component.  There are $(m-1)n$ internal vertices, denoted $M^{(j)}_{i}$ 
where $1 \leq j \leq m-1$ and $1\leq i \leq n$.   For convenience, the boundary vertices $1,2,\ldots,n$ are denoted $M^{(0)}_i$ and the boundary vertices $1',2',\ldots,n'$ are 
denoted $M^{(m)}_i$.
For each $i = 1,2,\ldots,n$ and $j = 0,1,\ldots,m-1$, we have edges from $M^{(j)}_i$ to $M^{(j+1)}_i$, and from $M^{(j)}_i$ to $M^{(j+1)}_{i-1}$.  
Here all lower indices are taken modulo $n$.

\begin{figure}[h!]
    \begin{center}
    \input{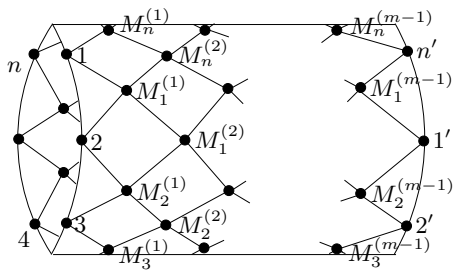}
    \end{center}
    \caption{The cylindrical network $N(m)$.}
    \label{fig:elec15}
\end{figure}

Now we focus on the network $N(2)$.  We label the edge weights of $N(2)$ as follows. For each $i = 1,2,\ldots,n$, we have edges with weights $a_i$ from $i$ to $M^{(1)}_{i-1}$, 
weights $b_i$ from $i$ to $M^{(1)}_{i}$, weights $c_{i+1}$ from $M^{(1)}_i$ to $i'$, and weights $d_{i}$ from $M^{(1)}_{i}$ to $(i-1)'$.  Here all indices are taken modulo $n$.

Now define polynomials $\kappa_i(a,b,c,d)$ and $\tau_i(a,b,c,d)$ as follows:
$$
\tau_i = \sum_{j=0}^{n-1} \left((a_{i+j}+c_{i+j})\prod_{k=i}^{i+j-1}(a_k\,c_k)\prod_{k=i+j}^{i+n-1}(b_k\,d_k) + (b_{i+j}+d_{i+j})\prod_{k=i}^{i+j}(a_k\,c_k)\prod_{k=i+j+1}^{i+n-1}(b_k\,d_k) \right)
$$
and
$$
\kappa_i = (a_{i}+c_{i})\prod_{k=i+1}^{i+n-1}(a_k\,c_k) +  (b_{i}+d_{i})\prod_{k=i+1}^{i+n-1}(b_k\,d_k) 
$$
$$+ \sum_{j=1}^{n-1} \left((a_{i+j}+c_{i+j})\prod_{k=i+1}^{i+j-1}(a_k\,c_k)\prod_{k=i+j}^{i+n-1}(b_k\,d_k) + (b_{i+j}+d_{i+j})\prod_{k=i+1}^{i+j}(a_k\,c_k)\prod_{k=i+j+1}^{i+n-1}(b_k\,d_k) \right)
$$

Also define 
$$
Q = -\prod_i a_i\,c_i + \prod_i b_i \,d_i.
$$

\begin{example}
Suppose $n=2$. Then 
$$\tau_1 = (a_1 + c_1)b_1d_1b_2d_2 + (b_1+d_1)a_1c_1b_2d_2 + (a_2+c_2)a_1c_1b_2d_2 + (b_2+d_2)a_1c_1a_2c_2,$$ 
$$\kappa_1 = (a_1+c_1)a_2c_2 + (b_1+d_1)b_2d_2 + (a_2+c_2)b_2d_2 + (b_2+d_2)a_2c_2.$$

\end{example}

Now introduce additional parallel wires with parameters $p$ and $-p$ from $1$ to $n'$.  This is a special case of the local electrical equivalence for parallel resistors.  (Here the parameters $p$ and $-p$ should be considered formally, instead of as nonnegative real numbers.)  We may perform $Y-\Delta$ operations to move the parameter $p$ through the resistor network, as shown in Figure \ref{fig:elec5}.

\begin{figure}[h!]
    \begin{center}
    \input{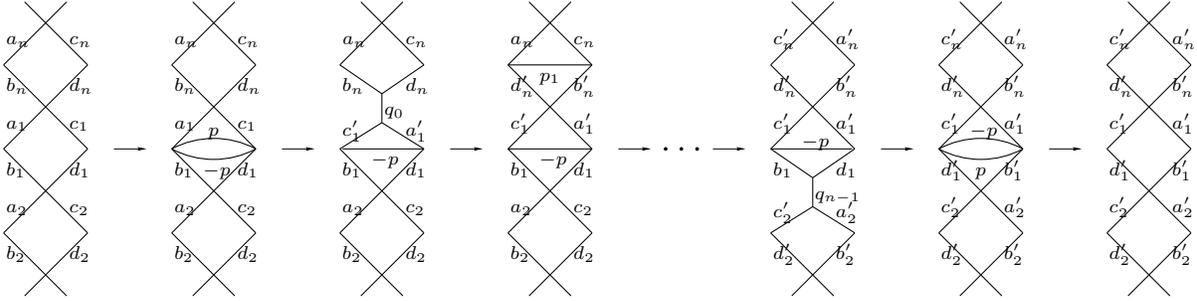}
    \end{center}
    \caption{Calculating the electrical $R$-matrix via certain ``virtual'' electrical networks.}
    \label{fig:elec5}
\end{figure}

\begin{lemma}\label{L:unique}
There is a unique non-zero parameter $p$, which is unchanged after moving through one revolution.
\end{lemma}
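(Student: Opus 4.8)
The plan is to follow the distinguished wire through the sequence of local moves in Figure~\ref{fig:elec5}, show that one full revolution acts on the parameter it carries by a M\"obius transformation fixing $0$, and then read off the unique nonzero fixed point from the resulting quadratic.

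First I would record that every elementary move used to transport the parameter is fractional‑linear in that parameter, with coefficients not involving it. Indeed, if a wire of conductance $q$ enters a $Y$ as one of its three legs, of conductances $\alpha,\beta,q$, the resulting $\Delta$ has edges $\frac{\alpha\beta}{\alpha+\beta+q}$, $\frac{\alpha q}{\alpha+\beta+q}$, $\frac{\beta q}{\alpha+\beta+q}$, each a ratio of linear functions of $q$; the same holds for a $\Delta\to Y$ move in which $q$ labels one triangle edge, and trivially for a series or parallel reduction involving $q$. The moves in Figure~\ref{fig:elec5} are arranged so that after absorbing the parameter into a triangle and re‑extracting it a single new parameter $q_0$ emerges, the other parameter‑dependent edges being merged into ambient edges by parallel combination; iterating produces $q_0,p_1,q_1,\dots,q_{n-1},p_n$, each a fractional‑linear function of its predecessor. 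Composing around the cylinder, $p=p_0\mapsto p_n$ is a M\"obius transformation, represented by a matrix $M=\left(\begin{smallmatrix}\mathsf a&\mathsf b\\ \mathsf c&\mathsf d\end{smallmatrix}\right)$, well defined up to scalar, whose entries are rational in the $a_i,b_i,c_i,d_i$. (One should also check that the partner $-p$ returns as $-p_n$, so that the whole local configuration closes up; this is part of the same bookkeeping.)

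Next I would observe that $p=0$ is always fixed: the two added wires have conductances $p$ and $-p$, so already for generic $p$ they leave the network unchanged, and for $p=0$ each of them is literally no wire, so every $Y\to\Delta$ step in the sequence acquires a $0$‑leg and degenerates to a series reduction, returning the (still $0$) parameter to itself after one revolution. Equivalently $\mathsf b=0$ (and $\mathsf d\neq 0$, since $M$ is invertible). Hence the fixed‑point equation $p_n=p_0$ becomes $p\,(\mathsf c\,p+\mathsf d-\mathsf a)=0$, whose only nonzero solution is $p=(\mathsf a-\mathsf d)/\mathsf c$; this gives both existence and uniqueness, and exhibits $p$ explicitly as a rational function of the edge weights. (This is exactly the $p$ used to define the electrical $R$‑matrix: moving it around transports the weights $a_i,b_i,c_i,d_i$ to new weights built from the $\kappa_i,\tau_i$, and insisting on $p_n=p_0$ is what pins the transformation down.)

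The remaining point — and the \emph{main obstacle} — is non‑degeneracy: one must verify that $\mathsf c\neq 0$ and $\mathsf a\neq\mathsf d$, since otherwise the composite map would be affine or parabolic and would have no nonzero fixed point. I expect to settle this by carrying out the $Y$–$\Delta$ bookkeeping of Figure~\ref{fig:elec5} far enough to compute $M$ (or at least $\mathsf c$ and $\mathsf a-\mathsf d$) and to see that neither vanishes identically; comparison with the polynomials $\kappa_i,\tau_i$ and with $Q=-\prod_i a_i c_i+\prod_i b_i d_i$ should make the resulting value of $p$ transparent. If only the statement for generic edge weights is wanted, it suffices instead to exhibit one specialization (e.g.\ all weights equal) at which $M$ is neither affine nor parabolic.
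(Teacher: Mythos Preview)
Your argument for uniqueness is the same as the paper's: each star--triangle step is fractional--linear in the travelling parameter, so one revolution is a M\"obius map, $p=0$ is visibly fixed, and the fixed--point equation is at most quadratic with $0$ as a root, leaving at most one nonzero solution.

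Where you diverge is in how you handle existence. You frame it as a non--degeneracy problem for the M\"obius matrix (showing $\mathsf c\neq 0$ and $\mathsf a\neq\mathsf d$) and call this the main obstacle. The paper does not attempt this at all: its proof of the lemma establishes only \emph{at most one} nonzero solution and explicitly defers existence to the proof of Theorem~\ref{T:elecwhirl}, where the value $p=Q/\kappa_1$ is written down and verified by direct induction to return to itself after one revolution (via the intermediate values $q_i=\tau_{1-i}/Q$, $p_i=Q/\kappa_{1-i}$). Your own final paragraph gestures at exactly this route (``comparison with the polynomials $\kappa_i,\tau_i$ and with $Q$''), so you are not wrong, but you are making the problem look harder than it is: rather than proving abstractly that the composite M\"obius map is neither affine nor parabolic, one simply exhibits the candidate $p$ and checks it works. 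That computation is short and also yields the explicit $R$--matrix formulae as a by--product, which the abstract non--degeneracy argument would not.
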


\begin{proof}
Let us denote by $p_i$ the value of the ``extra edge'' after $2i$ star-triangle transformations.  (The fourth network in Figure \ref{fig:elec5} shows the location of $p_1$.)  We claim that the parameter $p_i$ is a ratio of two linear functions in the original $p = p_0$. This is easily verified by induction using the star-triangle transformation. Thus after $2n$ star-triangle transformations the equation $p = p_n$ we obtain is either a linear or a quadratic equation, and zero is clearly one of the roots. Therefore there is at most one non-zero solution, and as we shall see soon in the proof of Theorem \ref{T:elecwhirl}, a solution indeed exists.
\end{proof}

Suppose we perform the sequence of $Y-\Delta$ operations of Figure \ref{fig:elec5}, using the parameter $p$ of Lemma \ref{L:unique} (which we still have to prove exists).  Then as illustrated in the final diagram of Figure \ref{fig:elec5}, the ``extra edges'' with parameters $p$ and $-p$ can be removed via the local electrical equivalence for parallel resistors.  We define the {\it electrical $R$-matrix} to be the transformation $R(a_i,b_i,c_i,d_i) = (a'_i,b'_i,c'_i,d'_i)$ induced on the edge weights by this transformation.

\begin{theorem}\label{T:elecwhirl}
The electrical $R$-matrix is given by
\begin{align*}
a'_i =\frac{\tau_{i}}{a_i \kappa_i}\qquad
c'_i = \frac{\tau_{i}}{c_i \kappa_i}\qquad
b'_i =\frac{\tau_{i+1}}{b_i \kappa_{i}}\qquad
d'_i = \frac{\tau_{i+1}}{d_i \kappa_{i}}.
\end{align*}
\end{theorem}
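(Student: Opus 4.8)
The plan is to carry out, explicitly, the sequence of transformations depicted in Figure \ref{fig:elec5}, tracking simultaneously the value of the migrating parameter $p_i$ and the running values of the four edge weights at each position; this is the electrical counterpart of the ``whurl'' computation of \cite{LP3}, and the bulk of the work is algebraic. First I would isolate a single local move: when the auxiliary edge carrying parameter $p_i$ sits in position $i$, it forms, together with two incident network edges, a triangle (equivalently a star), and one application of Kennelly's $Y-\Delta$ formulas followed by the series and parallel reductions of Proposition \ref{P:SP} (this is where the auxiliary edges labeled $q_i$ in Figure \ref{fig:elec5} appear and are absorbed) yields (a) the updated parameter $p_{i+1}$ as a ratio of two affine functions of $p_i$ with coefficients polynomial in $a_i, b_i, c_i, d_i$, and (b) the ``final'' values of the edge weights at position $i$ as explicit rational functions of $p_i$ and of $a_i, b_i, c_i, d_i$. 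This step should be routine, if somewhat lengthy.

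Next I would iterate around the cylinder. Composing the $n$ linear-fractional maps $p_{i-1} \mapsto p_i$ produces a single linear-fractional map, so the closure equation $p = p_n$ of Lemma \ref{L:unique} is at worst quadratic in $p$ with $p = 0$ as an evident root; its other root is an explicit rational function of $a,b,c,d$, which is precisely the existence assertion promised in the proof of Lemma \ref{L:unique}. To keep the composition under control I would proceed by induction, maintaining a normal form $p_i = \mathcal{N}_i / \mathcal{D}_i$ in which $\mathcal{N}_i$ and $\mathcal{D}_i$ are the ``partial sums'' truncating cyclic shifts of the expressions defining $\tau$ and $\kappa$, with the telescoping products $\prod_k (a_k c_k)$ and $\prod_k (b_k d_k)$ from $Q$ surfacing as the boundary terms; the inductive step is then a single substitution into the M\"obius map of the previous paragraph. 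Solving $p = p_n$ then pins down $p$ in terms of the $\kappa_i$ and $Q$.

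Finally I would substitute this value of $p$ back into the edge-weight formulas from the first step and simplify. I expect the relations $a'_i/c'_i = c_i/a_i$ and $b'_i/d'_i = d_i/b_i$ to emerge from the shape of the local move, so that the remaining task is to verify the two products $a'_i c'_i = \tau_i^2/(a_i c_i \kappa_i^2)$ and $b'_i d'_i = \tau_{i+1}^2/(b_i d_i \kappa_i^2)$, i.e.\ a single scalar identity at each position. The main obstacle is exactly this last simplification, together with the modular index bookkeeping: one has to recognize the truncated sums built up during the induction as the constituent pieces of $\tau_i$ and $\kappa_i$, and keep every subscript consistent modulo $n$; once the normal form for $p_i$ is correctly identified the remaining algebra is mechanical. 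As an independent check one can specialize to the formulas recorded in the example for $n=2$, or verify directly that the resulting transformation is consistent with the removal of the $p$, $-p$ pair at the close of Figure \ref{fig:elec5}.
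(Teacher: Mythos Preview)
Your proposal is correct and follows essentially the same route as the paper: track the auxiliary parameter through successive $\Delta$--$Y$ and $Y$--$\Delta$ moves by induction, identify it in closed form, and then read off the new edge weights from the local formulas. The one tactical difference is that the paper does not build up partial sums and then solve the closure equation; instead it posits at the outset the closed forms $p_i = Q/\kappa_{1-i}$ and $q_i = \tau_{1-i}/Q$ and checks in one line that each is carried to the next by the local move, after which periodicity gives $p_n = p_0$ for free and the edge-weight formulas follow by direct substitution rather than via the ratio/product split you describe.
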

\begin{proof}
We claim that the parameter $p = p_0 = Q/\kappa_1$ is the parameter of Lemma \ref{L:unique}.  Define $p_i$ to be the parameter after $i$ pairs of $\Delta-Y$ and $Y-\Delta$ operations, 
and $q_i$ to be the parameter obtained from $p_i$ by one $\Delta-Y$ operation.  Then
$$
q_i =  \tau_{1-i}/Q \qquad p_i = Q/\kappa_{1-i}.
$$
Indeed, calculating by induction 
\begin{align*}
q_i = \frac{(a_{1-i}+c_{1-i})p_{i}+(a_{1-i}\,c_{1-i})}{p_{i}} 
=\frac{(a_{1-i}+c_{1-i})Q+(a_{1-i}\,c_{1-i})\kappa_{1-i}}{Q}
=\frac{\tau_{1-i}}{Q}
\end{align*}
and similarly for $p_i = b_{1-i}d_{1-i}/(q_{i-1} +b_{1-i}+d_{1-i})$.  We may then calculate that the transformation is given by
\begin{align*}
a'_{1-i} = \frac{a_{1-i}p_{i} + c_{1-i}p_{i}+a_{1-i}c_i}{a_{1-i}} 
=\frac{(a_{1-i}+c_{1-i})Q + a_{1-i}c_{1-i}\kappa_{1-i}}{a_{1-i} \kappa_{1-i}} 
=\frac{\tau_{1-i}}{a_{1-i} \kappa_{1-i}}
\end{align*}
and similarly $c'_{1-i} = \frac{\tau_{1-i}}{c_{1-i} \kappa_{1-i}}$.  We also have
\begin{align*}
b'_{1-i} = \frac{q_{i-1}d_{1-i}}{q_{i-1}+d_{1-i}+b_{1-i}}
=\frac{\tau_{2-i}d_{1-i}}{\tau_{2-i} +(d_{1-i}+b_{1-i})Q} 
=\frac{\tau_{2-i}}{b_{1-i} \kappa_{1-i}}
\end{align*}
and similarly $d'_{1-i} = \frac{\tau_{2-i}b_{1-i}}{\kappa_{1-i}}$.
\end{proof}

\begin{proposition} \label{prop:un}
 The electrical $R$-matrix does not depend on where you attach the extra pair of edges, and is an involution. 
\end{proposition}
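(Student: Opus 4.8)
The plan is to derive both assertions from the explicit formula of Theorem~\ref{T:elecwhirl}, rather than from the geometric sliding construction directly.

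\emph{Independence of the attachment point.} The network $N(2)$ carries a $\Z/n$ rotational symmetry: the map $\rho$ sending $i\mapsto i+1$, $i'\mapsto(i+1)'$, $M^{(1)}_i\mapsto M^{(1)}_{i+1}$ is a graph automorphism whose effect on weights is $(a_i,b_i,c_i,d_i)\mapsto(a_{i-1},b_{i-1},c_{i-1},d_{i-1})$. The construction that attaches the virtual pair $(p,-p)$ along $i$--$(i-1)'$ and slides $p$ once around the cylinder is carried by $\rho^{1-i}$ to the construction attaching the pair along $1$--$n'$; in the rotated picture the self-reproducing parameter of Lemma~\ref{L:unique} is $p=Q/\kappa_i$ in place of $Q/\kappa_1$, but the induced map on edge weights is $\rho^{i-1}\circ R\circ\rho^{1-i}$. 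It therefore suffices to note that the formula of Theorem~\ref{T:elecwhirl} is $\rho$-equivariant, which is the evident fact that $\tau_i$ and $\kappa_i$ are cyclically natural: sending every weight $x_j$ to $x_{j+1}$ sends $\tau_i\mapsto\tau_{i+1}$ and $\kappa_i\mapsto\kappa_{i+1}$. Hence $\rho^{i-1}\circ R\circ\rho^{1-i}=R$ and $R$ does not depend on where the extra pair is attached.

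\emph{Involution.} Here I would first establish a ``scaling lemma'': there is a single rational function $\mu=\mu(a,b,c,d)$ with
\[
\tau_i'=\mu\,\tau_i\qquad\text{and}\qquad\kappa_i'=\mu\,\kappa_i\qquad(1\le i\le n),
\]
where $\tau_i'$, $\kappa_i'$ denote $\tau_i$, $\kappa_i$ evaluated at the transformed weights $R(a,b,c,d)$. Granting this, $R^2=\mathrm{id}$ drops out: using $a_i'=\tau_i/(a_i\kappa_i)$, $c_i'=\tau_i/(c_i\kappa_i)$, $b_i'=\tau_{i+1}/(b_i\kappa_i)$, $d_i'=\tau_{i+1}/(d_i\kappa_i)$ we get
\[
a_i''=\frac{\tau_i'}{a_i'\,\kappa_i'}=\frac{\mu\,\tau_i}{\frac{\tau_i}{a_i\kappa_i}\cdot\mu\,\kappa_i}=a_i,
\]
and identically $c_i''=c_i$, $b_i''=b_i$, $d_i''=d_i$; what makes the cancellation work is precisely that the factor $\mu$ is common to the $\tau$-identity and the $\kappa$-identity and is independent of $i$. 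To prove the scaling lemma I would substitute the four formulas of Theorem~\ref{T:elecwhirl} into the definitions of $\tau_i'$ and $\kappa_i'$, using $a_k'c_k'=\tau_k^2/(a_kc_k\kappa_k^2)$ and $b_k'd_k'=\tau_{k+1}^2/(b_kd_k\kappa_k^2)$, which exhibit a common factor in every monomial; one expects $\mu=\prod_k\tau_k^2\big/\bigl(\prod_k\kappa_k^2\cdot\prod_k a_kb_kc_kd_k\bigr)$, the value it takes for $n=1$ (there $\tau_1'=\tau_1^3/(\kappa_1^2a_1b_1c_1d_1)$ and $\kappa_1'=\tau_1^2/(\kappa_1 a_1b_1c_1d_1)$).

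The main obstacle is verifying the two identities $\tau_i'=\mu\,\tau_i$ and $\kappa_i'=\mu\,\kappa_i$ for general $n$: after clearing denominators these are polynomial identities in $4n$ variables, each side an indexed sum over the terms of $\tau_i$ or $\kappa_i$, and matching them requires controlling how the running products $\prod(a_kc_k)$ and $\prod(b_kd_k)$ in the definitions respond to the substitution. I would rewrite $\tau_i$ and $\kappa_i$ after factoring out $\prod_k b_kd_k$ in terms of the ratios $r_k=a_kc_k/(b_kd_k)$ and their partial products $\prod_{k=i}^{i+j-1}r_k$, and then hope for an index-shifting or telescoping argument parallel to the induction proving Theorem~\ref{T:elecwhirl}. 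A more structural fallback, if this is unpleasant: after one application of $R$ the slid edge has returned with value $p$ while its stationary partner still carries $-p$, and the same computation gives $Q'/\kappa_1'=-p$ (the sign comes from $a_i'c_i'$ and $b_i'd_i'$ exchanging roles in $Q$), so the pair $(-p,p)$ present in the transformed network is exactly the starting datum for a second application of $R$; thus $R^2$ is realized by sliding, in turn, each member of the original pair once around the cylinder, and one argues the second slide undoes the first rung by rung because the two local $\Delta$--$Y$/$Y$--$\Delta$ moves at a given rung are mutually inverse. The delicate point of that route is tracking the direction of each slide and the side from which each edge enters a rung, which is why I would lead with the computational argument.
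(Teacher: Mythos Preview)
Your independence argument via the rotational symmetry of the explicit formula is correct and complete. It is a different route from the paper's: the paper instead observes that once a self-reproducing parameter $p$ works at one attachment site, sliding it part way around produces a self-reproducing parameter at any other site, and then invokes the uniqueness of Lemma~\ref{L:unique} to conclude that the resulting transformations agree. Your argument trades this geometric picture for the cyclic naturality of $\tau_i$ and $\kappa_i$; both are short, and yours has the virtue of being entirely algebraic once Theorem~\ref{T:elecwhirl} is in hand.

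The involution part, however, has a genuine gap. Your ``scaling lemma'' $\tau_i'=\mu\,\tau_i$, $\kappa_i'=\mu\,\kappa_i$ is in fact \emph{equivalent} to $R^2=\mathrm{id}$: from $a_i''=a_i$ one gets $\tau_i'/\kappa_i'=\tau_i/\kappa_i$, and from $b_i''=b_i$ one gets $\tau_{i+1}'/\kappa_i'=\tau_{i+1}/\kappa_i$, and together these force a common ratio $\mu$. So proving the scaling lemma by brute substitution is exactly as hard as proving the involution directly by brute substitution, and you correctly identify this as the main obstacle without resolving it. Your $n=1$ check is fine but does not constitute an argument for general $n$.

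Your fallback is the right idea and is essentially what the paper does, but the paper makes it much cleaner than you fear. The key point is a purely \emph{local} one-rung check: if you push an edge of weight $q$ through a single rung via the pair of $Y$--$\Delta$ moves, and then push an edge of weight $-q$ through the same rung in the same direction, the rung's four weights return to their original values and the two pushed edges still carry weights that are negatives of one another. This is a direct computation with the $Y$--$\Delta$ formulas on one rhombus; there is no global bookkeeping of directions. Once you have it, both conclusions fall out: pushing $p$ all the way around (performing $R$) and then pushing $-p$ all the way around undoes $R$ rung by rung, and since $-p$ is thereby seen to be self-reproducing in the transformed network, that second push \emph{is} a second application of $R$ by the uniqueness in Lemma~\ref{L:unique}. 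The same local fact shows that a valid $p$ at one site transports to a valid parameter at any other, giving independence. I would recommend abandoning the scaling-lemma computation and simply carrying out this one-rung check.
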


\begin{proof}
If one applies the $Y-\Delta$ transformations to successively push through two edges, with weights $q$ and $-q$ negative of each other, then there is no net effect on the weights of the other edges involved.  
Also, the weights on the two edges pushed through remain negatives of each other. Thus if one pushes through $p$ to perform the electrical $R$-matrix, and then a $-p$, the latter will perform a transformation that will undo the first one. 
Furthermore, a working choice of parameter $p$ at one location this way yields a working choice of the parameter at any other location. The uniqueness in Lemma \ref{L:unique} implies that all resulting electrical $R$-matrices are the same. 
\end{proof}

\subsection{Electrical $R$-matrix satisfies Yang-Baxter}

Let us now consider the network $N(3)$.  The procedure of Section \ref{ssec:R} gives two different electrical $R$-matrices acting on $N(3)$: by acting on the part of the network involving vertices $M^{(0)}_i,M^{(1)}_i,M^{(2)}_i$, or by acting on the part of the network involving the vertices $M^{(1)}_i,M^{(2)}_i,M^{(3)}_i$.  We denote these $R$-matrices by $R_{12} \otimes 1$ and $1 \otimes R_{23}$ respectively.

\begin{theorem}\label{thm:RYB}
 The electrical $R$-matrix satisfies the Yang-Baxter equation 
$$(R_{12} \otimes 1) \circ (1 \otimes R_{23}) \circ (R_{12} \otimes 1) =
(1 \otimes R_{23}) \circ (R_{12} \otimes 1) \circ (1 \otimes R_{23}).$$
\end{theorem}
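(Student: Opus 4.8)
The plan is to realize both sides of the Yang--Baxter equation as sequences of $Y-\Delta$ transformations performed on an enlarged ``virtual'' network built from $N(3)$, exactly in the manner in which the electrical $R$-matrix itself was defined in Section~\ref{ssec:R}, and then to match the two sequences move by move. Both $(R_{12}\otimes 1)\circ(1\otimes R_{23})\circ(R_{12}\otimes 1)$ and $(1\otimes R_{23})\circ(R_{12}\otimes 1)\circ(1\otimes R_{23})$ are birational maps on the space of edge weights of $N(3)$ that preserve the underlying graph and, since the electrical $R$-matrix preserves the response matrix (Corollary~\ref{cor:resp}), the response matrix as well; the content of the theorem is that they coincide as rational maps.

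First I would unwind each single application of the $R$-matrix into its defining procedure. To compute $R_{12}\otimes 1$ one adjoins a pair of parallel edges with formal weights $p$ and $-p$ inside the sub-cylinder on the layers $M^{(0)}_i,M^{(1)}_i,M^{(2)}_i$, chooses $p$ by the fixed-point condition of Lemma~\ref{L:unique} (concretely the value identified in the proof of Theorem~\ref{T:elecwhirl}), pushes the $p$-edge once around the cylinder via star-triangle moves, and removes the resulting parallel $p,-p$ pair; similarly for $1\otimes R_{23}$ on the layers $M^{(1)}_i,M^{(2)}_i,M^{(3)}_i$. Composing three such procedures along the left-hand side of the Yang--Baxter equation therefore amounts to inserting three virtual edge-pairs in succession and pushing each around its sub-cylinder, with the parameter of the $k$-th pair recursively determined by the weights left behind by the previous stages; the right-hand side is the same with the two sub-cylinders interchanged and the order of the three insertions reversed.

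The heart of the argument is then purely combinatorial: the two resulting words in star-triangle moves should be related by (i) commutation of moves supported on disjoint triples of vertices, and (ii) the tetrahedron relation of Proposition~\ref{prop:FM}, which is exactly the local relation identifying two sequences of $Y-\Delta$ transformations acting on overlapping triples. This is the step I expect to be the main obstacle: one must choose the insertion points and the ordering of the elementary moves on each side so that the two $Y-\Delta$ words can be transported into a common normal form, all the while tracking the formal parameters $p$, and then check that the recursively produced virtual parameters are consistent with this rearrangement. Because star-triangle moves leave untouched all edge weights not directly involved and preserve the response matrix, once the two words are shown to be related by (i)--(ii) the two compositions have the same net effect on $N(3)$; the uniqueness in Lemma~\ref{L:unique}, together with the location-independence and involutivity in Proposition~\ref{prop:un}, guarantees that the inserted virtual parameters are forced, so no ambiguity remains.

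As a fallback, or as an independent verification, the identity can be checked directly: Theorem~\ref{T:elecwhirl} expresses $R$ explicitly through the polynomials $\tau_i$ and $\kappa_i$, so each side of the Yang--Baxter equation is an explicit rational function of the edge weights of $N(3)$, and equality can be confirmed by a symbolic computation; the conceptual argument above explains \emph{why} it holds and runs in parallel with the treatment of the whurl transformation in \cite{LP3}.
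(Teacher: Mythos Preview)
Your plan is correct in spirit and uses exactly the same ingredients as the paper---virtual $p,-p$ edge pairs, the tetrahedron relation (Proposition~\ref{prop:FM}), and the uniqueness of Lemma~\ref{L:unique} together with Proposition~\ref{prop:un}---but you are missing one simplifying device that turns your ``main obstacle'' into a one-line argument.

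The paper first observes (Figure~\ref{fig:elec16}) that the electrical $R$-matrix can equally well be performed by splitting an interior vertex $M^{(1)}_1$ and inserting a \emph{vertical} $p,-p$ pair, instead of the horizontal pair used in Section~\ref{ssec:R}. With this in hand, the three virtual parameters for $(R_{12}\otimes 1)\circ(1\otimes R_{23})\circ(R_{12}\otimes 1)$ are inserted \emph{simultaneously}: horizontal $p$ and $r$ for the two outer factors, vertical $q$ for the middle one. These three edges form a local $\Delta$ configuration; a single $\Delta$--$Y$ move converts them to a $Y$ configuration $(p',q',r')$, which has precisely the geometry corresponding to the right-hand side $(1\otimes R_{23})\circ(R_{12}\otimes 1)\circ(1\otimes R_{23})$. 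The tetrahedron relation then says that pushing $(p,q,r)$ once around the cylinder and applying $\Delta$--$Y$ gives the same result as applying $\Delta$--$Y$ first and then pushing $(p',q',r')$ around; in particular $(p',q',r')$ also satisfy the fixed-point condition, so by uniqueness they are the correct parameters for the right-hand side, and the two compositions agree.

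By contrast, your proposal inserts the three pairs sequentially and then attempts to match two long $Y$--$\Delta$ words via commutation and repeated tetrahedron moves. This can be made to work, but without the vertical-insertion trick the three virtual edges do not sit in a single local $\Delta/Y$ configuration, and the bookkeeping you flag as the obstacle is genuine. The paper's observation collapses that bookkeeping to one application of Proposition~\ref{prop:FM}.
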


\begin{proof}
First we note that to perform the electrical $R$-matrix, we can either add extra horizontal edges with conductances $p$ and $-p$ between $1$ and $n'$, or we could split the 
vertex $M^{(1)}_1$ into three vertices and add vertical edges with conductances $p$ and $-p$ between them (see Figure \ref{fig:elec16}). 
\begin{figure}[h!]
    \begin{center}
    \input{elec16.pstex_t}
    \end{center}
    \caption{}
    \label{fig:elec16}
\end{figure}

To perform the sequence of $R$-matrices $(R_{12} \otimes 1) \circ (1 \otimes R_{23}) \circ (R_{12} \otimes 1)$, we will add horizontal edges for the first and third factor, but add vertical edges for the second factor.
Let the corresponding weights be $p$, $q$, $r$ as shown in Figure \ref{fig:elec6}.
\begin{figure}[h!]
    \begin{center}
    \input{elec6.pstex_t}
    \end{center}
    \caption{}
    \label{fig:elec6}
\end{figure}
According to Lemma \ref{L:unique} and Theorem \ref{T:elecwhirl} and Proposition \ref{prop:un}, these weights exist and are unique.
Now, apply the $\Delta-Y$ transformation 
as shown in Figure \ref{fig:elec6} to obtain weights $p'$, $q'$ and $r'$, and their negatives on the other side. We claim that if these new weights are pushed around the cylinder, 
they come out the same at the other end. This follows from Proposition \ref{prop:FM}. Now if we push the weights $p$, $q$ and $r$ through and apply the $\Delta-Y$ transformations, we obviously get again the weights $p'$, $q'$ and $r'$. 
Thus by Lemma \ref{L:unique} and Theorem \ref{T:elecwhirl} and Proposition \ref{prop:un}, while pushing $p'$, $q'$ and $r'$ through we are applying 
$(1 \otimes R_{23}) \circ (R_{12} \otimes 1) \circ (1 \otimes R_{23})$. Since, by the electrical tetrahedron relation (Proposition \ref{prop:FM} ) the two results are the same, the claim of 
the theorem follows.
\end{proof}

\begin{cor} \label{cor:braid}
The electrical $R$-matrix gives an action of the symmetric group $S_m$ on $N(m)$.
\end{cor}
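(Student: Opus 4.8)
The plan is to realize the claimed $S_m$-action through its standard Coxeter generators. For $1 \le i \le m-1$ let $R_i$ denote the electrical $R$-matrix of Section~\ref{ssec:R} applied to the portion of $N(m)$ consisting of the vertices in layers $i-1$, $i$, $i+1$ (that is, the $M^{(i-1)}_k$, $M^{(i)}_k$, $M^{(i+1)}_k$ for all $k$) together with the edges between consecutive such layers. This sub-configuration is a two-layer slab to which the construction of Section~\ref{ssec:R} applies verbatim, so by Theorem~\ref{T:elecwhirl} the transformation $R_i$ is a well-defined birational map on the edge weights of $N(m)$; moreover it only alters the weights of edges inside this sub-configuration, since the entire construction of the $R$-matrix (adding the auxiliary pair $p$, $-p$ and pushing it once around the cylinder by $Y$--$\Delta$ moves, as in Figure~\ref{fig:elec5}) stays within these two slabs. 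It therefore suffices to check that $s_i \mapsto R_i$ respects the Coxeter presentation of $S_m$, namely $R_i^2 = \mathrm{id}$ for all $i$, $R_i R_j = R_j R_i$ whenever $|i-j| \ge 2$, and $R_i R_{i+1} R_i = R_{i+1} R_i R_{i+1}$ for all $i$.

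The involution relation $R_i^2 = \mathrm{id}$ is exactly the statement of Proposition~\ref{prop:un}. The commutation relation is immediate: when $|i-j| \ge 2$ the set of edges touched by $R_i$ (those among layers $i-1$, $i$, $i+1$) is disjoint from the set touched by $R_j$, and each $R$-matrix is a function only of the weights it changes, so $R_i$ and $R_j$ act on independent blocks of coordinates and hence commute. For the braid relation, observe that $R_i$ and $R_{i+1}$ together affect only the edges among layers $i-1$, $i$, $i+1$, $i+2$; these form a three-layer sub-network on which the construction of Section~\ref{ssec:R} produces precisely the two operators $R_{12}\otimes 1$ and $1 \otimes R_{23}$, with $R_i \leftrightarrow R_{12}\otimes 1$ and $R_{i+1} \leftrightarrow 1 \otimes R_{23}$. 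Under this identification the desired identity is exactly the Yang--Baxter equation of Theorem~\ref{thm:RYB}. Combining the three families of relations, $s_i \mapsto R_i$ extends to a homomorphism out of $S_m$, i.e.\ to an action of $S_m$ on the edge weights of $N(m)$, which is what we wanted.

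I do not expect a serious obstacle: all three ingredients — the involutivity of Proposition~\ref{prop:un}, the locality of the $R$-matrix construction, and the Yang--Baxter relation of Theorem~\ref{thm:RYB} — are already in hand. The one point that warrants an explicit sentence is the locality claim, that performing $R_i$ never disturbs edges outside the two slabs among layers $i-1$, $i$, $i+1$; this follows by inspecting the sequence of $Y$--$\Delta$ transformations in Figure~\ref{fig:elec5}, each of which is supported on three consecutive vertices lying within those slabs, so that neither the auxiliary edge nor any $Y$--$\Delta$ move ever reaches a vertex in another layer.
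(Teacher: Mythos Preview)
Your proof is correct and follows the same approach as the paper, which simply says ``Follows from Theorem~\ref{thm:RYB} and Proposition~\ref{prop:un}.'' You have merely unpacked this: the involutivity is Proposition~\ref{prop:un}, the braid relation is Theorem~\ref{thm:RYB}, and the commutation relation (which the paper leaves implicit) you supply via the evident disjoint-support argument.
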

\begin{proof}
Follows from Theorem \ref{thm:RYB} and Proposition \ref{prop:un}.
\end{proof}

\section{Electrical ASW factorization}
\subsection{Universal response matrix}
Let $G$ be a cylindrical electrical network.  Thus $G$ is an electrical network embedded  into a cylinder so that the intersection of $G$ with the boundary of the cylinder is exactly the boundary vertices of $G$.  Let $G(\infty)$ denote the universal cover of $G$.  It is an infinite periodic network embedded into an infinite strip.  Given finite sets $V_1 \subset V_2\subset \ldots$ of vertices of $G(\infty)$ which 
eventually cover all vertices of $G(\infty)$, we obtain a sequence of {\it truncations} $G(1)$, $G(2), \ldots$ of $G(\infty)$ as follows.  We let $G(N)$ be the subgraph of $G(\infty)$
 consisting of all edges incident to a vertex in $V_N$.  Furthermore, we declare a vertex of $G(N)$ internal if it lies in $V_N$ and is internal in $G(\infty)$.  Each $G(N)$ is a 
finite planar electrical network.

We suppose that the boundary vertices of $G(\infty)$ are numbered $\Z = \{\ldots,-2,-1,0,1,2,\ldots \}$ on one side of the boundary, and by $\Z'=\{\ldots,-2',-1',0',1',2',\ldots\}$ on the other side of 
the boundary.  (We will always picture the infinite strip as vertical, with vertex labels increasing as we go downwards.)  We define the {\it universal response matrix} of $G$ to be given by $\L_{ij}$ where
$$
\L_{ij} = \lim_{N \to \infty} L_{ij}(G(N))
$$
where $i,j$ denote vertices of $G(\infty)$.  For sufficiently large $N$, any two fixed vertices of $G(\infty)$ will lie in $G(N)$.   These limits exist and are finite, due to following lemma.

If $H$ is an electrical network, and $V$ a subset of its vertices, the {\it response matrix of $H/V$} is the response matrix obtained by declaring all the vertices in $V$ to be interior.

\begin{lemma} \label{lem:Smon}
 Assume $V \subset V'$ are two subsets of vertices of an electrical network $H$, and assume $i$ and $j$ are two vertices not contained in $V'$. Then $L'_{ij}$
in the response matrix of $H/V'$ is at least as large as $L_{ij}$ in the response matrix of $H/V$.
\end{lemma}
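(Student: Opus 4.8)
The plan is to reduce the statement to the monotonicity of a single Schur complement step, and then to iterate. First I would recall the electrical-network interpretation of the response matrix: for vertices $i,j$ not declared interior, $L_{ij}$ equals (up to the sign conventions fixed in Section 2) the current that flows out at $j$ when a unit potential is imposed at $i$ and all other non-interior vertices are grounded, with the interior vertices carrying no net current (Kirchhoff's law). Equivalently, $L_{ij}$ is an off-diagonal entry of the Schur complement $K/K_V$ of the Kirchhoff matrix $K$ of $H$ with respect to the block indexed by $V$. The key reduction is that it suffices to treat the case $V' = V \cup \{v\}$ for a single extra vertex $v$, since the general case follows by composing finitely many such steps; and Schur complements compose (taking the Schur complement with respect to $V'$ equals first taking it with respect to $V$, then with respect to the image of $v$).

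So the heart of the matter is: if $M = K/K_V$ is the response matrix of $H/V$, with rows/columns indexed by the non-interior vertices including $i,j,v$, and $M' = M/M_{\{v\},\{v\}}$ is obtained by additionally making $v$ interior, then $M'_{ij} \geq M_{ij}$. Writing this out, $M'_{ij} = M_{ij} - M_{iv} M_{vv}^{-1} M_{vj}$. Now I would invoke the structural positivity properties of response matrices of electrical networks with nonnegative conductances: the off-diagonal entries $M_{iv}, M_{vj}$ are $\geq 0$ (they are sums of weights of conductance paths, all positive in our sign convention where $K_{ij} \geq 0$ off-diagonal), while the diagonal entry $M_{vv}$ is $\leq 0$ (it is the negative of the total effective conductance from $v$ to the rest). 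Hence $M_{vv}^{-1} \leq 0$, so $-M_{iv}M_{vv}^{-1}M_{vj} \geq 0$, giving $M'_{ij} \geq M_{ij}$ as claimed. I should be slightly careful about the degenerate case $M_{vv} = 0$, i.e. $v$ is not connected to anything: then $v$ contributes nothing and $M' = M$ restricted appropriately, so the inequality is trivially an equality; similarly if $v$ becomes disconnected one works in the relevant connected component.

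The step I expect to be the main (though still mild) obstacle is justifying cleanly that the signs $M_{iv}, M_{vj} \geq 0$ and $M_{vv} \leq 0$ persist \emph{after} the first Schur complement with respect to $V$ — that is, that $H/V$ is again, structurally, ``the response matrix of an electrical network'' in the sense needed, with off-diagonal entries nonnegative and diagonal entries nonpositive, and row sums zero. This is a standard fact (the Schur complement of a Kirchhoff matrix with respect to an interior block is again of Kirchhoff type, corresponding to the network after ``eliminating'' those vertices, as used implicitly throughout \cite{CIM, dVGV, KW}), but I would state it as a short preliminary observation: eliminating an interior vertex replaces it by a clique on its neighbors with appropriately scaled (positive) conductances, preserving the sign pattern and the zero row sums, and this is exactly one Schur complement step. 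With that observation in hand, the induction on $|V' \setminus V|$ goes through verbatim, each step adding a nonnegative quantity to $L_{ij}$, which yields $L'_{ij} \geq L_{ij}$ and completes the proof.
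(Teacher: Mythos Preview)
Your proposal is correct and follows essentially the same approach as the paper: reduce to the case $|V'\setminus V|=1$ via iterated Schur complements, then use that off-diagonal entries of the response matrix are nonnegative and diagonal entries are nonpositive to see the single-vertex Schur complement only increases $L_{ij}$. The paper's proof is simply a terser version of yours, omitting the explicit formula $M'_{ij}=M_{ij}-M_{iv}M_{vv}^{-1}M_{vj}$ and the degenerate-case discussion.
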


\begin{proof}
 The Schur complement with respect to some set $V$ can be taken as a sequence of Schur complements with respect to single vertices in $V$ in some order (see for example \cite[(3.7)]{CM}). Thus,
it would suffice to prove the statement for $V'-V$ consisting of a single vertex. In this case the claim is obvious however, since off-diagonal entries of a response matrix are nonnegative,
and diagonal entries are non-positive.  
\end{proof}

\begin{theorem}\label{T:universalresponse}
The matrix $\L_{ij}$ is a well-defined infinite periodic matrix, which does not depend on which truncations $G(N)$ are taken.
\end{theorem}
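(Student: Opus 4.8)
The plan is to show two things separately: first, that the limit $\L_{ij} = \lim_{N \to \infty} L_{ij}(G(N))$ exists and is finite for every pair $i,j$ of boundary vertices of $G(\infty)$; second, that the limit does not depend on the chosen exhaustion $V_1 \subset V_2 \subset \cdots$. For the existence, the key observation is monotonicity together with a uniform upper bound. Fix vertices $i,j$. Once $N$ is large enough that $i,j$ lie in $G(N)$, passing from $G(N)$ to $G(N+1)$ amounts to adding more edges and then declaring the new vertices of $V_{N+1} \setminus V_N$ (that are internal in $G(\infty)$) to be interior. Adding edges is harmless, and by Lemma~\ref{lem:Smon} (applied with $H = G(N+1)$, $V$ the interior vertices of $G(N)$, and $V'$ the interior vertices of $G(N+1)$), declaring more vertices interior only increases $L_{ij}$ when $i \neq j$ (and decreases $|L_{ii}|$, i.e.\ increases $L_{ii}$). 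Hence for fixed $i \neq j$ the sequence $L_{ij}(G(N))$ is eventually nondecreasing, and for fixed $i$ the sequence $L_{ii}(G(N))$ is eventually nondecreasing as well (so $|L_{ii}(G(N))|$ is nonincreasing).

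It remains to produce a uniform upper bound so that these monotone sequences converge. For the off-diagonal entry $L_{ij}(G(N))$, here is where one uses that $G$ is on a cylinder — equivalently, $G(\infty)$ is periodic with some period, and each finite truncation $G(N)$ is a finite \emph{planar} network whose boundary vertices sit on the boundary of a disk in the cyclic order inherited from the strip. For circular planar networks, the off-diagonal response matrix entries are controlled by sums over paths / by the structure results recalled in the introduction; more directly, the entry $L_{ij}(G(N))$ is an increasing function of every edge conductance and is bounded above by the effective conductance between $i$ and $j$ in the infinite periodic network, which is finite because already a single finite ``sleeve'' of $G(\infty)$ separating $i$ and $j$ from infinity provides a finite cut whose total conductance bounds the current. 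Concretely, one can bound $L_{ij}(G(N))$ by the total conductance of the finite set of edges of $G(\infty)$ meeting any fixed vertical cross-section of the strip lying between $i$ and $j$; since this cross-section is finite and does not grow with $N$, we get a bound independent of $N$. The diagonal entries are trivially bounded below by $0$ (in this sign convention $L_{ii} \le 0$ and $|L_{ii}| = \sum_{j \neq i} L_{ij}$ minus any grounding, which is again controlled). Thus each monotone bounded sequence converges, establishing that $\L_{ij}$ is well-defined and finite; periodicity of $\L$ then follows because applying the deck transformation of $G(\infty)$ carries one admissible exhaustion to another, and by the independence statement proved next the limit is unchanged.

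For independence of the exhaustion, suppose $(V_N)$ and $(V'_N)$ are two exhaustions giving truncations $G(N)$ and $G'(N)$. Fix $i,j$. Given any $N$, since $(V'_M)$ eventually covers all vertices, choose $M$ with $V_N \subseteq V'_M$; then $G(N)$ sits inside $G'(M)$ as a subnetwork with fewer edges and fewer interior vertices, so by Lemma~\ref{lem:Smon} (and monotonicity in edge set) $L_{ij}(G(N)) \le L_{ij}(G'(M)) \le \lim_M L_{ij}(G'(M))$ for $i \neq j$. Taking $N \to \infty$ gives $\lim_N L_{ij}(G(N)) \le \lim_M L_{ij}(G'(M))$, and the symmetric argument gives the reverse inequality; the diagonal case is identical with the inequalities reversed in the obvious way. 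Hence the two limits agree.

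The main obstacle I anticipate is the uniform upper bound on $L_{ij}(G(N))$: monotonicity alone does not prevent the entries from diverging, and one genuinely needs to exploit the cylindrical (periodic, finite-period) structure to exhibit a finite ``bottleneck'' separating $i$ from the ends of the strip. Once that geometric input is in place — that a finite cross-section of the strip has finite total conductance and every current path from $i$ to $j$ crosses it — the bound follows from the elementary fact that a response matrix entry never exceeds the effective conductance across any separating cutset, and the rest of the argument is the soft monotonicity bookkeeping sketched above.
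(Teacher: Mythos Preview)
Your monotonicity argument, the interleaving argument for independence of the exhaustion, and the deduction of periodicity from independence via the deck transformation all match the paper's proof essentially line for line.  The paper phrases monotonicity by passing to the auxiliary network $G(\infty)_N$ (declare only the interior vertices in $V_N$ to be interior in all of $G(\infty)$) and then observing that the extra edges connect only zero-volt boundary vertices, which is exactly your ``adding edges is harmless'' step; you should make explicit that this uses $i,j\in V_N$, so that neither endpoint of a new edge is $i$.

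Where you diverge from the paper is in the finiteness bound.  The paper does not use any cutset or effective-conductance estimate.  Instead it views each truncation $G(N)$ as sitting inside a finite $m$-fold cover $G[m]$ of the cylinder for $m$ large, and uses the identity $L_{ij}(G)=\sum_{k=1}^m L_{ij_k}(G[m])$ (which follows from linearity of the response plus periodicity) together with Lemma~\ref{lem:Smon} to get $L_{ij}(G(N))\le L_{ij_k}(G[m])\le L_{ij}(G)<\infty$.  This has the pleasant side effect of producing the concrete bound $\L_{ij}\le L_{ij}(G)$.

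Your cutset argument, as written, has a gap: you invoke ``a fixed vertical cross-section of the strip lying between $i$ and $j$'', but no such cross-section exists when $i$ and $j$ lie on the same boundary component of the strip (both in $\Z$, say), which is certainly allowed.  The sentence about a sleeve ``separating $i$ and $j$ from infinity'' points in a different direction and does not by itself bound $L_{ij}$.  The easy repair is to drop the cross-section entirely and use the trivial cut at $i$: since every row of a response matrix sums to zero, $L_{ij}\le -L_{ii}$, and $-L_{ii}(G(N))$ is the total current leaving $i$ when $i$ is at potential $1$ and all other boundary vertices at $0$, hence is at most $\sum_{e\ni i} w(e)$, which is finite because $G$ is finite and $i$ has the same finite degree in every $G(N)$ once $N$ is large.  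With that fix your argument is complete and slightly more elementary than the paper's, at the cost of not yielding the sharper comparison $\L_{ij}\le L_{ij}(G)$.
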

\begin{proof}
There are several parts to this statement.

The limits $\L_{ij}$ exist.  Note that for sufficiently large $N$, $L_{ij}(G(N))$ can be calculated by taking $L_{ij}$ of the network $G(\infty)_N$, which is obtained from $G(\infty)$ by declaring that only the internal vertices of $G(N)$ are internal in $G(\infty)_N$.  The network $G(\infty)_N$ is obtained from $G(N)$ by adding extra boundary vertices attached only to boundary vertices of $G(N)$ (and by assuming $N$ is large enough, we may assume that these extra vertices are not incident to $i$ or $j$).  But $L_{ij}(G(N))$ is by definition calculated by measuring the current flowing through $j$ when vertex $i$ is set to one volt and all other boundary vertices are set to zero volts.  Since current does not flow between zero volt vertices it follows that $L_{ij}(G(N)) = L_{ij}(G(\infty))_N$.   By Lemma \ref{lem:Smon} the sequence $L_{ij}(G(N))_N$ is non-decreasing as $N \to \infty$, since each network is obtained from the previous one by declaring some extra vertices internal and taking the corresponding Schur complement.

The limits $\L_{ij}$ do not depend on the sequence of truncations. Assume we have two different sequences $V_1 \subset V_2\subset \ldots$ and $V'_1 \subset V'_2\subset \ldots$. Since we 
know that each eventually covers all vertices, we know that for each $i$ there is a $j$ such that $V_i \subset V'_j$ and $V'_i \subset V_j$. Then applying Lemma \ref{lem:Smon} we conclude
that the two limits bound each other from above, and thus are equal.

The limits $\L_{ij}$ are periodic: $\L_{ij} = \L_{(i+n)(j+n)}$. Indeed, take two sequences $V_1 \subset V_2\subset \ldots$ and $V'_1 \subset V'_2\subset \ldots$, one obtained from the other 
by a shift on the universal cover by the period $n$. We know that they give the same value of $\L_{ij}$ by the previous part. On the other hand, it is clear that the value one of them gives 
for $\L_{ij}$ is the value the other gives for $\L_{(i+n)(j+n)}$.

The limits $\L_{ij}$ are finite. Any truncation of $G(\infty)$ can also be viewed as a truncation of a finite cover $G[m]$ (obtained by lifting $G$ to a $m$-fold cover of the cylinder) for a large enough $m$. By Lemma \ref{lem:Smon} the conductance $L_{ij}(G(N))$ is bounded from above by the same conductance in $G[m]$, which in turn is bounded from above by the same conductance in the original network $G$. Indeed, if $j_1, \ldots, j_m$ are vertices in
$G[m]$ that cover $j$, then $L_{ij}(G) = \sum_{k=1}^m L_{ij_k}(G[m])$.  This can be seen as follows.  Using the linearity of the response and periodicity, the sum $\sum_{k=1}^m L_{ij_k}(G[m])$ measures the current through $j_\ell$ (for any $\ell$) when all vertices $i_k$ that cover $i$ have potential $1$, and all other vertices have potential $0$.  But projecting onto $G$ by identifying all covers of a vertex we see that this current flow is exactly $L_{ij}(G)$.  In particular, $L_{ij}(G(N)) \leq L_{ij_k}(G[m]) \leq L_{ij}(G)$ for any $k$. This shows that $\L_{ij}$ is bounded from above by $L_{ij}(G)$, and thus if the latter is finite, so is the former.
\end{proof}

Thus for fixed $i,j$, we can approximate $\L_{ij}$ arbitrarily well by calculating $L_{ij}(G(N))$ for some large $N$.  

%

\begin{prop}
The universal response matrix $\L_{ij}$ is invariant under the local electrical equivalences of $N$.
\end{prop}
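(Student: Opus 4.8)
The plan is to reduce to a single local electrical equivalence $G\rightsquigarrow G'$ (series--parallel, loop removal, pendant removal, or $Y$--$\Delta$) and to show $\L_{ij}(G)=\L_{ij}(G')$ for all boundary vertices $i,j$; since the argument is symmetric in $G$ and $G'$, this suffices. The parallel--edge and loop cases are trivial --- they leave the Kirchhoff matrix of $G(\infty)$ literally unchanged --- so I may assume the move deletes one interior vertex of its local configuration (the $Y$--centre, or the interior degree--two, resp.\ degree--one vertex). Lift the move to the universal cover: it becomes an $n$--periodic family of such moves on $G(\infty)$, the $k$--th one deleting an interior vertex $c_k$; write $B_k$ for the set of neighbours of $c_k$. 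These deleted vertices are interior, so $\mathrm{Int}(G'(\infty))=\mathrm{Int}(G(\infty))\setminus\{c_k:k\in\Z\}$, while the boundary $\Z\sqcup\Z'$ is common to $G(\infty)$ and $G'(\infty)$. An elementary but crucial point is that the sets $B_k$ are pairwise disjoint: the neighbours of $c_k$ are distinct lifts whose ``heights'' all shift by one as $k\mapsto k+1$, so no vertex can be a neighbour of two different $c_k$'s.

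The strategy is then to transport a local equivalence of \emph{finite} networks through the truncation procedure, exploiting the freedom --- granted by Theorem~\ref{T:universalresponse} --- to choose the exhausting sequence. Start from any exhausting sequence of finite vertex sets of $G'(\infty)$; using the disjointness of the $B_k$, enlarge each term by finitely many vertices to obtain an exhausting sequence $\widetilde V_1\subset\widetilde V_2\subset\cdots$ with the property that for every $k$, either $B_k\subseteq\widetilde V_N$ or $B_k\cap\widetilde V_N=\emptyset$. Put $V_N=\widetilde V_N\cup\{c_k : B_k\subseteq\widetilde V_N\}$; this is an exhausting sequence of vertex sets of $G(\infty)$, giving truncations $G(N)$, while $\widetilde V_N$ gives truncations $G'(N')$. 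I then claim that $G'(N')$ is obtained from $G(N)$ by performing the given local move once at each $c_k$ with $B_k\subseteq\widetilde V_N$ --- equivalently, $G(N)$ and $G'(N')$ are finite networks related by a sequence of local electrical equivalences, and (since only interior vertices are eliminated) have the same boundary vertices. Indeed, when $B_k\subseteq\widetilde V_N$ the vertex $c_k$ lies in $V_N$, is interior in $G(N)$, and retains its full degree there, so the move can be applied; and when $B_k\cap\widetilde V_N=\emptyset$ the vertex $c_k$ and all edges of that configuration are absent from $G(N)$ while the corresponding ``after'' edges are absent from $G'(N')$, so the two truncations already agree there. Since series, pendant, and $Y$--$\Delta$ moves preserve the response matrix of a finite network (Proposition~\ref{P:SP} and the $Y$--$\Delta$ transformation), we get $L_{ij}(G(N))=L_{ij}(G'(N'))$ for all $N$ large enough that $i$ and $j$ are present. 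Letting $N\to\infty$ and invoking Theorem~\ref{T:universalresponse} (the limit $\L_{ij}$ does not depend on the exhausting sequence) yields $\L_{ij}(G)=\L_{ij}(G')$.

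The one genuine difficulty is exactly this matching of truncations: if the configuration at some $c_k$ were cut by the boundary of a truncation, then the truncations of $G$ and of $G'$ would be genuinely inequivalent networks --- containing, say, a partial star on one side and a partial triangle on the other --- and their $ij$--responses would differ, so one could not apply Proposition~\ref{P:SP} termwise. The disjointness of the $B_k$ together with the latitude from Theorem~\ref{T:universalresponse} is precisely what lets me choose exhausting sequences that never cut a copy of the move. A secondary routine point is to check that each elimination inside $G(N)$ is legitimate (the relevant principal minor of the Kirchhoff matrix is nonzero), which holds as in the finite planar case because each interior block is attached to the boundary.
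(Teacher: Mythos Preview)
Your proof is correct and follows the same idea as the paper's: choose truncations that completely contain (or completely miss) each lifted copy of the local move, so that the finite truncations of $G(\infty)$ and $G'(\infty)$ are related by finitely many local electrical equivalences, hence have identical response matrices, and then pass to the limit using Theorem~\ref{T:universalresponse}. The paper's own proof is a two-sentence sketch of exactly this; you have simply spelled out the construction of the matched exhausting sequences and verified that the truncations line up.

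Two small remarks. First, your claim that loop removal ``leaves the Kirchhoff matrix of $G(\infty)$ literally unchanged'' is not quite right under the paper's convention for $K_{ii}$ (a loop is incident to $i$), but of course the response matrix is unchanged, which is all you need. Second, the disjointness of the $B_k$ is genuinely true, but the real reason is that the outer vertices of the local configuration are distinct vertices of $G$, so their lifts lie in disjoint fibres of the covering map; your ``heights shift by one'' phrasing is a bit loose, since the heights within a single $B_k$ need not be equal. Neither point affects the validity of the argument.
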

\begin{proof}
For any local electrical transformation in $G$ one can choose a sequence of truncations of $G(\infty)$ containing completely several occurrences of this transformation. 
Since the conductances in these truncations do not change, the limit is also invariant.  
\end{proof}

\begin{cor} \label{cor:resp}
 The electrical $R$-matrix preserves the universal response matrix.
\end{cor}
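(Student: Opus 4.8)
The plan is to recognize the electrical $R$-matrix as a finite composition of local electrical equivalences of the cylindrical network, and then simply invoke the preceding Proposition, which asserts that every such equivalence leaves the universal response matrix $\L$ unchanged.

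In detail, the transformation $R(a_i,b_i,c_i,d_i)=(a'_i,b'_i,c'_i,d'_i)$ of Theorem~\ref{T:elecwhirl} was defined in Section~\ref{ssec:R} as the net effect on the edge weights of the following chain of moves: first one adjoins a pair of parallel edges between $1$ and $n'$ with conductances $p$ and $-p$ (a special case of the parallel-resistor equivalence of Proposition~\ref{P:SP}, as noted there, since it changes a single entry of the Kirchhoff matrix by $p+(-p)=0$); next one performs the $2n$ successive $Y-\Delta$ and $\Delta-Y$ transformations of Figure~\ref{fig:elec5} that push the parameter $p$ once around the cylinder; finally one removes the resulting parallel pair of conductances $p$ and $-p$ by the same parallel-resistor equivalence. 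Each of these finitely many moves is one of the local electrical equivalences of Section~\ref{ss:trans}, so the preceding Proposition applies to each and hence to their composite; by Lemma~\ref{L:unique}, Theorem~\ref{T:elecwhirl}, and Proposition~\ref{prop:un} that composite is exactly the electrical $R$-matrix. (Passing to the universal cover is harmless: a local move on the cylinder $G$ lifts to a periodic family of local moves on $G(\infty)$, and the preceding Proposition is already formulated for local equivalences of $G$ and proved by choosing truncations of $G(\infty)$ that contain many full copies of the affected region.)

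The only point requiring care is that the intermediate ``virtual'' networks of Figure~\ref{fig:elec5} carry edge weights --- among them $p=Q/\kappa_1$ and the quantities $q_i$, $p_i$ appearing in the proof of Theorem~\ref{T:elecwhirl} --- that are rational functions of the original conductances rather than nonnegative reals, so that the equivalences above are being applied in a formal setting. This is not an obstruction: for each finite truncation $G(N)$, the response matrix is a rational function of the edge weights, being a Schur complement of the Kirchhoff matrix, and the invariance statements of Proposition~\ref{P:SP} and of the $Y-\Delta$ transformation are identities of rational functions valid away from the zero loci of the relevant denominators --- in particular on the Zariski-dense set of honest positive weights, and hence identically. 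Taking the limit over truncations as in Theorem~\ref{T:universalresponse} then shows that every entry $\L_{ij}$ is preserved. I expect this bookkeeping between formal and positive weights to be the only genuine subtlety; granting it, the corollary is immediate from the preceding Proposition.
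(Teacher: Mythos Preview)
Your proposal is correct and follows essentially the same line as the paper. The paper's own proof is a single sentence observing that the entries of $\L$ are limits of rational functions in the edge weights and that these rational functions make sense formally even with the negative conductances used in deriving the $R$-matrix; you have simply unpacked this into the explicit decomposition of $R$ as a composite of local moves and made the ``rational-function identity'' justification more explicit.
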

\begin{proof}
The only comment one needs to make is that the entries of the universal response matrix are limits of rational functions in the edge weights, and that these rational functions make sense formally even when negative conductances are used (as in the derivation of the electrical $R$-matrix).
\end{proof}

\subsection{Circular and cylindrical total nonnegativity}\label{ssec:TNN}
Let $I = \{i_1<i_2<\ldots<i_k\},J = \{j_1 < j_2<\ldots<j_k\} \subset [n]$ be subsets of the same cardinality.  Then $(I,J)$ is a {\it circular pair} if a cyclic permutation of $i_1,i_2,\ldots,i_k,j_k,\ldots,j_1$ is in order.  A $n \times n$ matrix $M$ is {\it circular totally-nonnegative} if the minor $\det(M_{I,J})$ is nonnegative for every circular pair $(I,J)$.  Curtis, Ingerman, and Morrow \cite{CIM} show that the response matrices of circular planar electrical networks are exactly the set of circular totally-nonnegative symmetric matrices for which every row sums to 0.  (Note that the response matrices in \cite{CIM} are the negative of ours.)

Let us extend this to cylindrical electrical networks. Put the total order $\cdots -1<0<1<2< \cdots < 2'<1'<0'<-1' < \cdots$ on $\Z \cup \Z'$.  Let $I, J \subset \Z \cup \Z'$ be two ordered subsets of the same finite cardinality.  Then $(I,J)$ is a {\it cylindrical pair} if a cyclic permutation of $i_1,i_2,\ldots,i_k,j_k,\ldots,j_1$ is in order.  A matrix $M$ with rows and columns labeled (and ordered) with $\Z \cup \Z'$ is {\it cylindrically totally-nonnegative} if the minor $\det(M_{I,J})$ is nonnegative for every cylindrical pair $(I,J)$.

\begin{prop}\label{P:TNN}
Suppose $\L=(\L_{ij})$ is the universal response matrix of a finite cylindrical electrical network.  Then $\L$ is cylindrically totally-nonnegative.
\end{prop}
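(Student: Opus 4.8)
The plan is to reduce the cylindrical statement to the known circular statement of Curtis–Ingerman–Morrow applied to a suitable finite truncation, and then take a limit. Fix a cylindrical pair $(I,J)$ with $I,J \subset \Z \cup \Z'$ finite of equal cardinality. First I would choose a truncation $G(N)$ of $G(\infty)$ large enough that all the vertices indexed by $I \cup J$ are boundary vertices of $G(N)$. Since $G(N)$ is a finite planar electrical network embedded in a disk, its boundary vertices inherit a cyclic order from the boundary of the strip, and I claim that this cyclic order restricted to $I \cup J$ is compatible with the linear order $\cdots < -1 < 0 < 1 < \cdots < 2' < 1' < 0' < \cdots$ used to define cylindrical pairs: indeed the boundary of the disk in the universal cover reads the left side top-to-bottom and then the right side bottom-to-top, which is exactly this order. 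Hence $(I,J)$, being a cylindrical pair, is a circular pair for the disk $G(N)$.

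The second step is to invoke the circular total nonnegativity of response matrices of circular planar networks — this is part (4) of the cited theorem of \cite{CIM}, namely that $\det(L(G(N))_{I,J}) \geq 0$ for every circular pair $(I,J)$. (One small point: $G(N)$ may have extra boundary vertices beyond those indexed by $I \cup J$, but total nonnegativity holds for all circular pairs among all boundary vertices, so in particular for $(I,J)$.) Thus $\det\big((L(G(N)))_{I,J}\big) \geq 0$ for every $N$ large enough.

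The third step is to pass to the limit. By Theorem \ref{T:universalresponse} we have $\L_{ij} = \lim_{N\to\infty} L_{ij}(G(N))$ entrywise, and these limits are finite. The minor $\det(M_{I,J})$ is a fixed polynomial (the Leibniz expansion) in the finitely many entries $M_{i,j}$ with $i \in I$, $j \in J$, hence a continuous function of those entries; therefore
\[
\det\big(\L_{I,J}\big) = \lim_{N\to\infty} \det\big((L(G(N)))_{I,J}\big) \geq 0,
\]
since a limit of nonnegative numbers is nonnegative. As $(I,J)$ was an arbitrary cylindrical pair, $\L$ is cylindrically totally nonnegative.

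\textbf{Main obstacle.} I expect the only genuinely delicate point to be the bookkeeping in the first step: verifying that a cylindrical pair for $\Z \cup \Z'$ really does become a circular pair for the disk $G(N)$, i.e. that the linear/cyclic order chosen on $\Z\cup\Z'$ matches the cyclic order in which the corresponding vertices appear on the boundary circle of the truncation. This is a matter of carefully tracking how the universal cover's infinite strip is cut down to a disk and how its boundary is traversed; once the orders are seen to agree, the rest is a direct application of \cite{CIM} plus continuity of minors, with no real analytic subtlety because Theorem \ref{T:universalresponse} already guarantees the entrywise limits exist and are finite.
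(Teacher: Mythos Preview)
Your proposal is correct and follows essentially the same approach as the paper's proof: fix a cylindrical pair, take a sufficiently large truncation $G(N)$ so that it is a circular planar network containing all the relevant boundary vertices, apply the Curtis--Ingerman--Morrow nonnegativity to $\det(L(G(N))_{I,J})$, and pass to the limit. The paper's version is terser---it does not spell out the order-compatibility check or the continuity of the minor---but the argument is the same.
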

\begin{proof}
For each fixed cylindrical pair $(I,J)$, and sufficiently large $N$, the truncation $G(N)$ is a finite circular electrical network including all the boundary vertices in $I$ and $J$.  
But then $\det(\L_{I,J}) = \lim_{N \to \infty}\det(L_{I,J}(G(N))) \geq 0$, using Curtis-Ingerman-Morrow's result.
\end{proof}

\begin{remark}
The converse to Proposition \ref{P:TNN}, namely, which cylindrically totally nonnegative matrices are realizable as universal response matrices, is more subtle.  Let $\ldots,j_{-1},j_0,j_1,j_2,\ldots$ be the lifts to the universal cover of a particular vertex in a finite cylindrical electrical network $G$.  Then for a fixed $i$, the (doubly-infinite) sequence $a_k = \L_{i,j_k}$ must satisfy certain recursions or convergence properties.  In the different but closely related setting of total nonnegative points of loop groups, the correct property is to ask for the generating function of $a_k$ to be a rational function (see \cite[Theorem 8.10]{LP3}).
\end{remark}

\subsection{ASW factorization for networks $N(m)$}
We now assume we are given a network $G=N(m)$.  The vertices have been labeled so that if we take the ``low'' edge (from $M_i^{(k)}$ to $M_i^{(k+1)}$) at every step starting from $i$ we will end up at $i'$.  
Note that a shortest path from one side of the cylinder to the other consists of exactly $m$ edges.

From now on we consider groves in the universal cover $G(\infty)$ of $G$ (or in the truncations $G(N)$).  The boundary partition of such a grove would be a planar partition of $\Z \cup \Z'$ arranged on the two edges of an infinite strip (or in the truncations of this).


\begin{lemma}\label{L:short}
Suppose $i$ and $j'$ are can be connected by a path with $m$ edges.  There exists an integer $M$ such that there are no groves $\Gamma$ with the properties
\begin{enumerate}
\item
there is a grove component $\Gamma_{\{i,j'\}}$ with boundary vertices $\{i,j'\}$ and which uses an edge below (resp. above) any of the shortest paths from $i$ to $j'$.
\item
there are grove components with boundary vertices $\{i+1,(j+1)'\},\ldots,\{i+M,(j+M)'\}$ (resp. $\{i-1,(j-1)'\},\ldots,\{i-M,(j-M)'\}$).
\end{enumerate}
\end{lemma}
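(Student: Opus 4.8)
The plan is to control the groves in Lemma~\ref{L:short} by combining the structure of the network $N(m)$ with the boundedness estimate on Kenyon--Wilson polynomials from Lemma~\ref{L:combtype}, together with the electrical monotonicity Lemma~\ref{lem:Smon}. The key point is that a grove satisfying conditions (1) and (2) forces a very ``long and thin'' component $\Gamma_{\{i,j'\}}$ that has to snake past $M$ parallel doubletons $\{i+k,(j+k)'\}$; each such doubleton pins down a shortest path of length $m$, and these shortest paths together with the detouring component $\Gamma_{\{i,j'\}}$ are forced to interleave in a way that consumes arbitrarily many edges as $M\to\infty$. I would make this precise in the universal cover $G(\infty)$, where the shortest paths from $i+k$ to $(j+k)'$ form a family of disjoint ``staircase'' lattice paths, and the hypothesis that $\Gamma_{\{i,j'\}}$ uses an edge strictly below (resp.\ above) all shortest paths from $i$ to $j'$ means its component must cross each of the staircases associated to $\{i+1,(j+1)'\},\dots,\{i+M,(j+M)'\}$. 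Since components of a forest are vertex-disjoint, this crossing cannot happen along shared vertices, so $\Gamma_{\{i,j'\}}$ must ``go around'' each staircase; I would argue that going around the $k$-th staircase in a forest where the $k$-th staircase is itself occupied by another component costs at least one extra edge, giving $|\Gamma_{\{i,j'\}}| \geq m + \Omega(M)$.

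The main step is then to convert this combinatorial edge-count lower bound into the nonexistence claim. First I would reduce to truncations $G(N)$: by Theorem~\ref{T:universalresponse} and the limiting description of $\L$, it suffices to show that for all sufficiently large $N$ there are no such groves in $G(N)$, and a grove in $G(\infty)$ with the stated properties would live inside some $G(N)$. Next, fix the planar partition type $\sigma$ prescribed by conditions (1)--(2) (the doubletons $\{i,j'\},\{i+1,(j+1)'\},\dots,\{i+M,(j+M)'\}$ together with singletons elsewhere); the non-singleton parts of $\sigma$ all have size $2$, so Lemma~\ref{L:combtype} applies with $K=2$ and gives a uniform bound $c_2$ on the coefficients of $\uPr(\sigma)$ as a polynomial in the $L_{ab}$'s. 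On the other hand, the contribution of groves satisfying (1)--(2) to $\Pr(\sigma)$ is, up to the normalization by $\Pr(\uncrossing)$, a polynomial in the edge conductances whose monomials all have degree $\geq m + \Omega(M)$ beyond the ``minimal'' ones, because every such grove contains the long component; comparing with the expansion of $\uPr(\sigma)$ in the $\L_{ab}$, and using that in $N(m)$ the entries $\L_{ab}$ themselves decay (again by Lemma~\ref{lem:Smon}, the entries $\L_{i,(j+k)'}$ between far-apart vertices become small), one sees that for $M$ large enough the total weight of such groves is strictly smaller than the smallest possible nonzero value — in fact it must be exactly $0$ since grove weights are honest monomials in conductances and the partition-function identity forces cancellation only of terms that cannot appear. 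I would phrase the final contradiction as: the coefficient extraction from $\uPr(\sigma) = \sum_\tau P_{\sigma,\tau} L_\tau$ shows $\uPr(\sigma)$ is a polynomial of bounded degree (in terms depending only on the parts of $\sigma$ of size $\geq 2$, by Theorem~\ref{T:upr}(2) and Lemma~\ref{L:combtype}), whereas a grove of type $\sigma$ violating the length bound would contribute a monomial of too high degree, which is impossible.

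The step I expect to be the main obstacle is the purely combinatorial claim that a forest component joining $i$ to $j'$ and passing on the ``wrong side'' of all shortest $i$-to-$j'$ paths, in the presence of $M$ parallel shortest-path components for the neighboring doubletons, must have length growing with $M$. This requires a careful topological/planarity argument in the strip: one must show the staircases genuinely obstruct each other, i.e.\ that the detour cannot be ``amortized'' by reusing structure, and that declaring more vertices interior (the truncation/Schur-complement picture) does not create shortcuts. I would handle it by a planar Jordan-curve-type argument — the $k$-th staircase separates $i$ from $(j+k)'$ within the relevant region, and a path from $i$ to $j'$ lying below all shortest paths must cross each separating staircase — combined with a bookkeeping of how many edges a forest can ``spend'' crossing $M$ nested separators without violating the forest (acyclicity + boundary-touching) constraints. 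Once this length bound is in hand, the analytic comparison via Lemma~\ref{L:combtype} and Lemma~\ref{lem:Smon} is routine, and $M$ can be taken to be any integer exceeding the resulting threshold.
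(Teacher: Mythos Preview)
Your proposal has a genuine gap: the ``analytic conversion'' step does not work, and in fact the lemma is a purely combinatorial statement about the graph $N(m)$ that has nothing to do with weights, $\L_{ij}$'s, or Kenyon--Wilson polynomials. The assertion is that \emph{no} grove with properties (1)--(2) exists in the universal cover, not that such groves have small weight; invoking Lemma~\ref{L:combtype} and Lemma~\ref{lem:Smon} is therefore beside the point. More concretely, your degree argument cannot distinguish ``bad'' groves from ``good'' ones: every grove with boundary partition $\sigma$ is a spanning forest with the same number of components (namely the number of parts of $\sigma$), hence the same total number of edges; so the monomial degree in the edge conductances is identical for every grove of type $\sigma$. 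Likewise the degree of $\uPr(\sigma)$ in the $L_{ij}$'s is fixed by Theorem~\ref{T:upr}(1) and carries no information about the length of an individual grove component. Lemma~\ref{L:combtype} bounds \emph{coefficients}, not degrees, and does not yield the contradiction you describe.

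The paper's proof is much more direct and entirely combinatorial. One argues by a one-step propagation: if $\Gamma_{\{i,j'\}}$ contains a ``bad'' high edge (below the lowest shortest path) whose right endpoint has upper index $k$, then the path inside the next component $\Gamma_{\{i+1,(j+1)'\}}$ from $i+1$ to $(j+1)'$ must divert to avoid that edge, forcing it to contain a bad high edge with right endpoint of index at most $k-2$. Iterating, the index drops by at least $2$ with each successive component, so the process terminates after at most $m/2$ steps; hence any $M>m/2$ works. Your combinatorial intuition that the parallel doubletons obstruct the detouring component is in the right spirit, but the correct mechanism is this index-decrement propagation (which in particular does \emph{not} assume the neighboring components are shortest paths), rather than a length-growth argument about $\Gamma_{\{i,j'\}}$ itself.
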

\begin{proof}
For a grove component $\Gamma_{\{i,j'\}}$ let us call {\it {bad}} the edges it contains that are below the lowest path from $i$ to $j'$. 
\begin{figure}[h!]
    \begin{center}
    \input{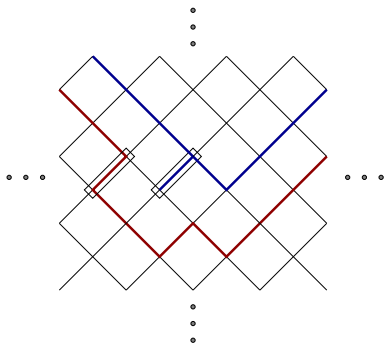}
    \end{center}
    \caption{A bad high edge in the blue grove component forces an earlier bad high edge in the red grove component.}
    \label{fig:elec2}
\end{figure}
Assume the grove component $\Gamma_{\{i,j'\}}$ has bad edges, and furthermore without loss of generality assume that it has a bad high edge. The case of a bad low edge is similar with 
the left and right sides of the network swapped. 
Assume $k$ is the first index such that bad high edge has one of the $M_i^{(k)}$ as its right endpoint. We claim that the $\Gamma_{\{i+1,(j+1)'\}}$ 
component has a bad high edge with right end having index $k-2$ or smaller. Indeed, consider the unique path from $i+1$ to $(j+1)'$ inside $\Gamma_{\{i+1,(j+1)'\}}$. 
In order to avoid touching 
the bad edge of $\Gamma_{\{i,j'\}}$ it has to turn, diverting from the lowest shortest path from $i+1$ to $(j+1)'$. The first time it thus diverts gives a desired high edge. 

Now, the index of the first bad high edge cannot decrease indefinitely, and in fact one sees that the statement of the lemma holds for $M > m/2$. 
The case of edges above the highest shortest path is similar. 
\end{proof}

Define the radii $R_k$ for $k = 1,2, \ldots, m$ by
$$
R_k = \frac{\prod_{i} w_{M_i^{(k-1)} M_{i-1}^{(k)}}}{\prod_{i} w_{M_i^{(k-1)} M_{i}^{(k)}}}
$$
where we have denoted the conductance of the edge joining two vertices $v, v'$ by $w_{vv'}$.

\begin{lemma}\label{L:optimalpath}
Let $(P_{1},P_2,\ldots,P_n)$ be an $n$-tuple of consecutive shortest paths, where $P_i$ connects $i+k$ to $(i+k-1)'$ for each $i$, and some fixed $k$.  Suppose that 
\begin{enumerate}
\item
all the $P_i$ have the same shape; thus they high or low edges respectively at the same points along the path.  
\item
$R_1 \geq R_2 \geq \cdots \geq R_m$.
\end{enumerate}
Then the total weight $\wt(P_1 \cup P_2 \cup \cdots \cup P_n)$ is maximized exactly when the highest path is taken.
\end{lemma}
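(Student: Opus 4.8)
The plan is to compare the weight of a tuple of paths all having a fixed common shape $\sigma$ — a sequence in $\{\text{high},\text{low}\}^m$ recording at each of the $m$ steps whether the edge is a "high" edge (from $M_i^{(k-1)}$ to $M_{i-1}^{(k)}$) or a "low" edge (from $M_i^{(k-1)}$ to $M_i^{(k)}$) — against the weight of the all-high tuple, and to show the ratio is a product of the $R_k$'s raised to nonnegative powers, hence $\leq 1$. First I would set up notation: for $P_i$ a shortest path from $i+k$ to $(i+k-1)'$ with shape $\sigma$, the total weight $\wt(P_1 \cup \cdots \cup P_n)$ factors, step by step $k=1,\ldots,m$, into $\prod_{\text{step }k} (\text{product over the }n\text{ paths of the weight of their }k\text{-th edge})$. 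Because the $n$ paths are consecutive and share the same shape, at step $k$ each of them uses either its high edge or its low edge, and crucially the set of high edges $\{M_i^{(k-1)} M_{i-1}^{(k)} : i\}$ used across all $n$ paths is either all of the high edges at level $k$ (if $\sigma_k = \text{high}$) or none of them (if $\sigma_k=\text{low}$), and similarly for low edges — this is the periodicity/consecutiveness built into the hypothesis. So the step-$k$ contribution is exactly $\prod_i w_{M_i^{(k-1)} M_{i-1}^{(k)}}$ if $\sigma_k = \text{high}$ and $\prod_i w_{M_i^{(k-1)} M_{i}^{(k)}}$ if $\sigma_k = \text{low}$.

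Next I would take the ratio with the all-high tuple (shape $(\text{high},\ldots,\text{high})$), whose step-$k$ contribution is $\prod_i w_{M_i^{(k-1)} M_{i-1}^{(k)}}$. For each step $k$ where $\sigma_k = \text{low}$, the ratio picks up a factor $\frac{\prod_i w_{M_i^{(k-1)} M_{i}^{(k)}}}{\prod_i w_{M_i^{(k-1)} M_{i-1}^{(k)}}} = R_k^{-1}$; for each step where $\sigma_k = \text{high}$ the factor is $1$. Hence
\[
\frac{\wt(P_1 \cup \cdots \cup P_n)}{\wt(\text{all-high})} \;=\; \prod_{k \,:\, \sigma_k = \text{low}} R_k^{-1}.
\]
Since every $R_k > 0$ and, by hypothesis (2), $R_1 \geq R_2 \geq \cdots \geq R_m$, one would want this product to be $\leq 1$. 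Here one has to be slightly careful: a single $R_k^{-1}$ could exceed $1$ if $R_k < 1$. The resolution is that a shortest path has exactly $m$ edges and, reading off the geometry of $N(m)$, the number of low steps equals the number of high steps is forced only in the trivial sense; what is actually true is that the shape $\sigma$ of a shortest path from $i+k$ to $(i+k-1)'$ is constrained so that the multiset of "low" levels is a prescribed one — in particular the all-high path exists, and any other valid shape is obtained from it by a sequence of elementary moves each of which replaces a high edge at some level $k$ by a low edge at a later level $k' > k$ (this is the standard lattice-path exchange, reflecting that going "low" at an early level must be compensated by going "high"/"low" at later levels to reach the correct endpoint). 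Under such a move the weight gets multiplied by $R_{k'}/R_k \leq 1$ by the monotonicity hypothesis. Iterating, every valid shape has weight $\leq$ that of the all-high shape, with strict inequality unless $\sigma$ is the all-high shape (assuming the $R_k$ are not all equal; if some are equal one gets a tie, but the statement "maximized exactly when the highest path is taken" should be read as the highest path always attaining the max, which it does).

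The main obstacle is the bookkeeping in the previous paragraph: making precise the claim that the high/low edges used at level $k$ across the $n$ consecutive same-shape paths form a "complete" or "empty" set, and that any admissible shape is reachable from the all-high shape by level-increasing exchange moves, requires carefully unwinding the indexing of $N(m)$ (the "$i-1$" shifts, taken mod $n$, and the labeling convention fixing which edge is "low"). I would handle this by drawing the picture explicitly for the tuple $(P_1,\ldots,P_n)$ superimposed on $G(\infty)$, observing that consecutiveness plus common shape forces the union $\bigcup_i P_i$ to be a disjoint union of $n$ translates of a single path, so that at each level the edges it occupies are exactly one full "diagonal" of the grid; the exchange-move description of admissible shapes then follows from the elementary combinatorics of monotone lattice paths in a cylinder. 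Everything else is the short computation above.
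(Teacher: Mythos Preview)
Your argument contains a genuine confusion about what the ``highest path'' is.  A shortest path from $i+k$ to $(i+k-1)'$ has $m$ edges, and since each high edge decreases the lower index by $1$ while each low edge keeps it fixed, \emph{exactly one} of the $m$ steps is high.  There are thus only $m$ admissible shapes $S_1,\ldots,S_m$, according to the position of the unique high step; the ``highest path'' in the lemma is $S_1$ (high at step $1$, low thereafter), not the all-high sequence $(\text{high},\ldots,\text{high})$, which does not connect the required endpoints at all.  Your first computation takes the all-high tuple as the reference and then worries that some $R_k^{-1}$ might exceed $1$; that worry arises precisely because you are comparing against an object that is not one of the competitors.  Your attempted rescue (``replaces a high edge at some level $k$ by a low edge at a later level $k'$'') is also mis-stated --- such a replacement changes the number of high steps --- though the ratio $R_{k'}/R_k$ you write down is the correct one for the move that \emph{swaps} the positions of the unique high step and a low step.

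Once the constraint ``exactly one high step'' is in hand, the proof is one line.  In your own notation, the step-$k$ contribution of the tuple with shape $S_j$ is the product of high (resp.\ low) weights at level $k$ if $k=j$ (resp.\ $k\neq j$), so
\[
\wt(S_j) \;=\; \Bigl(\prod_i w_{M_i^{(j-1)}M_{i-1}^{(j)}}\Bigr)\prod_{\ell\neq j}\Bigl(\prod_i w_{M_i^{(\ell-1)}M_i^{(\ell)}}\Bigr)
\;=\; R_j\cdot \prod_{\ell=1}^m \Bigl(\prod_i w_{M_i^{(\ell-1)}M_i^{(\ell)}}\Bigr),
\]
whence $\wt(S_j)/\wt(S_1)=R_j/R_1\le 1$.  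The paper phrases this as a rhombus-swap: moving the high step from position $k$ to position $k+1$ multiplies the weight by $R_{k+1}/R_k\le 1$, so the highest shape wins.  Your exchange-move paragraph is groping toward exactly this, but the unnecessary detour through the invalid all-high reference and the vague talk of ``the multiset of low levels is a prescribed one'' obscure a computation that is immediate once you identify the actual set of admissible shapes.
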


The weight of a subgraph is simply the product of its edge weights.

\begin{proof}
All the shortest paths are connected by switching from one side of a rhombus to the other, see Figure \ref{fig:elec3}.  
\begin{figure}[h!]
    \begin{center}
    \input{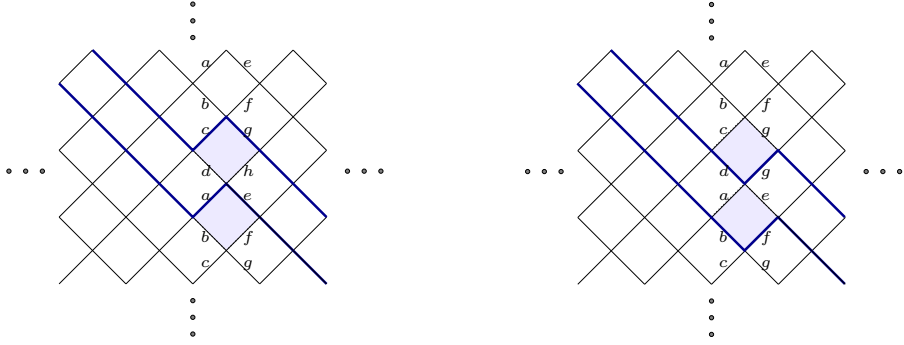}
    \end{center}
    \caption{Here $n=2$; which of the two groves has larger weight depends on which of the two radii $ac/bd$ and $fh/eg$ is bigger.}
    \label{fig:elec3}
\end{figure}
If this parallelogram involves vertices with upper index $k-1$, $k$ and $k+1$, then the higher path has greater weight exactly when $R_k > R_{k+1}$.
\end{proof}

Let $\tau_K$ be the partition of $\Z \cup \Z'$ with parts of size two $\{k,(k-1)'\}$ for all $k \in \{2,3,\ldots,K\}$, parts of size two 
$\{k,k'\}$ for $k \in \{-K,-K+1,\ldots,0\}$, and all other parts are singletons.  Thus in particular, $1$ is a singleton.  We shall denote by $\sigma_K$ the partition 
of $\Z \cup \Z'$ obtained from $\tau_K$ by placing $1$ in the same part as $\{0,0'\}$ to get a single part $\{1,0,0'\}$ of size three.

We shall suppose that the truncation $G(N)$ includes the boundary vertices $-N,-N+1,\ldots,1,2,\ldots,N$ and also the boundary vertices $-N',(-N+1)',\ldots,1,2,\ldots,N'$.  For $N \gg K$ the partitions $\tau_K$ and $\sigma_K$ naturally gives rise to partitions of the boundary vertices of $G(N)$, where boundary vertices of $G(N)$ which are not boundary vertices of $G(\infty)$ are all considered singletons.  We will still denote these partitions of boundary vertices of $G(N)$ by $\tau_K$ and $\sigma_K$.

\begin{theorem} \label{thm:asw}
Suppose that $R_1 \geq R_2 \geq \cdots \geq R_m$.  Then
$$
\lim_{K \to \infty} \lim_{N \to \infty} \frac{\uPr(\sigma_K)_{G(N)}}{\uPr(\tau_K)_{G(N)}} = a
$$
where $a$ is the weight of the high edge connected to the vertex $1$ of $G$.
\end{theorem}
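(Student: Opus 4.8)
The plan is to replace the Kenyon--Wilson polynomials by honest weighted grove counts and then to isolate the contribution of the single edge of weight $a$. First I would record that, by the definition of $\uPr$, for each $N$ one has $\uPr(\tau)_{G(N)} = Z_N(\tau)/Z_N(\uncrossing)$, where $Z_N(\tau) = \sum_{\Gamma}\wt(\Gamma)$ sums the weights of the groves $\Gamma$ of $G(N)$ with boundary partition $\tau$. Thus the quantity in question equals $\lim_{K\to\infty}\lim_{N\to\infty} Z_N(\sigma_K)/Z_N(\tau_K)$, and it suffices to work with these grove sums.

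Next comes the combinatorial heart. The vertex $1 = M^{(0)}_1$ of $G$ has exactly two incident edges: the ``high'' edge $e_a$ of weight $a$ joining it to $M^{(1)}_0$, and a ``low'' edge $e_b$ joining it to $M^{(1)}_1$. I would split $Z_N(\sigma_K)$ according to which of $e_a,e_b$ occurs at the vertex $1$. Applying Lemma~\ref{L:short} to the $\{0,0'\}$-component of a type-$\tau_K$ grove --- whose neighbouring components $\{-1,-1'\},\dots,\{-M,-M'\}$ forbid that component from using any edge lying above the unique shortest path from $0$ to $0'$ --- shows that for $K$ large the edge from $0$ to $M^{(1)}_0$ always belongs to the $\{0,0'\}$-component, so $M^{(1)}_0$ lies in it. Hence $\Gamma\mapsto\Gamma\cup e_a$ is a weight-multiplying-by-$a$ bijection from the type-$\tau_K$ groves onto the type-$\sigma_K$ groves in which the vertex $1$ uses only $e_a$, giving
$$ Z_N(\sigma_K) \;=\; a\,Z_N(\tau_K) \;+\; E_N(K), $$
where $E_N(K)$ is the weighted count of type-$\sigma_K$ groves that use $e_b$.

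It then remains to prove $E_N(K)/Z_N(\tau_K)\to 0$ as $N\to\infty$ and then $K\to\infty$. If a type-$\sigma_K$ grove uses $e_b$, then $M^{(1)}_1$ lies in the $\{1,0,0'\}$-component; this is an edge strictly below the shortest path from $0$ to $0'$, and following the rhombus structure of $N(m)$ inductively along the boundary forces the components $\{k,(k-1)'\}$ for $k=2,\dots,K$ away from their ``highest'' shapes. By Lemma~\ref{L:optimalpath}, together with the hypothesis $R_1\geq R_2\geq\cdots\geq R_m$, any such global configuration carries weight much smaller than the dominant groves counted by $Z_N(\tau_K)$; Lemma~\ref{L:combtype} (via Lemma~\ref{L:singles}) bounds the number of combinatorial types so the comparison survives the summation over all groves, and Lemma~\ref{lem:Smon} lets one control the behaviour as $N\to\infty$. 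Combining these estimates yields the claimed limit $a$.

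The step I expect to be the main obstacle is precisely this last estimate: one must make the suppression of the ``$e_b$'' configurations genuinely quantitative and uniform in $N$, tracking how the defect created by forcing $M^{(1)}_1$ into the $\{0,0'\}$-component propagates around the cylinder and comparing the weights of the resulting groves against the highest-shape groves. This is where the bounds on Kenyon--Wilson polynomials from Section~\ref{sec:KWpoly} and the weight-monotonicity in Lemma~\ref{L:optimalpath} are essential, and where the behaviour at the truncation boundary of $G(N)$ has to be handled with care.
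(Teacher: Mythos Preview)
Your setup is correct and matches the paper: the reduction to the ratio $Z_N(\sigma_K)/Z_N(\tau_K)$, the observation via Lemma~\ref{L:short} that $M^{(1)}_0$ lies in the $\{0,0'\}$-component so that $\Gamma\mapsto\Gamma\cup e_a$ is a weight-$a$ bijection onto the $\sigma_K$-groves using $e_a$ alone, and the resulting decomposition $Z_N(\sigma_K)=a\,Z_N(\tau_K)+E_N(K)$ are all exactly what the paper does.

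The gap is in your error estimate, and it is a real one. You propose to show $E_N(K)/Z_N(\tau_K)\to 0$ by arguing that the $e_b$-groves have their $\{k,(k-1)'\}$-components forced off the highest shape $S_1$, and then invoking Lemma~\ref{L:optimalpath} to conclude these configurations carry ``much smaller'' weight. But Lemma~\ref{L:optimalpath} only says the $S_1$-shape \emph{maximizes} the weight under $R_1\ge R_2\ge\cdots\ge R_m$; it gives no strict gap. If $R_1=R_2$ then an $n$-tuple of $S_2$-shaped paths has exactly the same weight as an $n$-tuple of $S_1$-shaped paths, and your comparison yields nothing. The theorem is stated with non-strict inequalities, so this case must be handled. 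Your appeals to Lemma~\ref{L:combtype} and Lemma~\ref{lem:Smon} are also misplaced here: those lemmas concern the $L$-polynomial expansion and monotonicity of response-matrix entries, respectively, and play no role in this grove-counting estimate (they enter only later, in Theorem~\ref{thm:main}).

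The paper's mechanism is different and does not rely on any strict weight gap. Given a bad grove $\Gamma$ (one using $e_b$), the paper performs a surgery: it removes $jn$ of the $S_i$-shaped shortest paths appearing in $\Gamma$, shifts the remainder down, and inserts $jn$ new $S_1$-shaped paths near the top, for each $j=1,\dots,m/\epsilon$. Lemma~\ref{L:optimalpath} is used only to say that this swap does not \emph{decrease} the weight (up to a fixed constant from the boundary modifications), so each $\Gamma'_j$ satisfies $\wt(\Gamma'_j)\ge C_1\wt(\Gamma)$. The point is that this produces roughly $m/\epsilon$ distinct good groves from each bad one, while any good grove is the image of at most $C_2 m$ such surgeries. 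This many-to-bounded correspondence, not a weight ratio going to zero, is what forces $\wt(A''_\sigma)\le\epsilon\,\wt(A'_\sigma)$. That counting idea is the missing ingredient in your outline.
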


\begin{proof}
Let $v$ denote the vertex connected to both $0$ and $1$.  Let $e$ be the edge joining $1$ to $v$, so that $e$ has weight $a$.  Let $e'$ denote the other edge incident to $1$.

To approximate the LHS, we shall assume we have chosen $N \gg K \gg M$.  Let $\Gamma$ be a grove with boundary partition either $\sigma_K$ or $\tau_K$.  By Lemma \ref{L:short}, the grove component $\Gamma_{K-M,(K-M-1)'}$ cannot extend either above or below the set of edges contained in shortest paths from $K-M$ to $(K-M-1)'$.  In particular, there is a bound on the number of choices of $\Gamma_{K-M,(K-M-1)'}$, not depending on $K$ or $N$.  In the following, we shall assume that we have fixed such a choice for $\Gamma_{K-M,(K-M-1)'}$.  

Let $\Gamma$ be a grove with boundary partition $\tau_K$.  By Lemma \ref{L:short}, the grove component $\Gamma_{0,0'}$ cannot extend above the (unique) shortest path from $0$ to $0'$.  In particular, the vertex $v$ must lie in $\Gamma_{0,0'}$.  Thus the edge $e$ is never used in $\Gamma$, and $\Gamma \cup \{e\}$ is a grove with boundary partition $\sigma_K$.  Also by Lemma \ref{L:short}, the grove component $\Gamma_{-M,(-M)'}$ cannot extend either above or below the (unique) shortest path from $-M$ to $(-M)'$, and therefore must be exactly this shortest path.  Similar observations hold for a grove with boundary partition $\sigma_K$.  In particular, the part of the grove above $\Gamma_{-M,(-M)'}$ and the part below are essentially independent.

Let us denote by $A_\tau$ (resp. $A_\sigma$) the set of groves with boundary partition $\tau_K$ (resp. $\sigma_K$) and by $\wt(A_\tau)$ (resp. $\wt(A_\sigma)$) the total weight of that set of groves.  

Let $\Gamma$ be a grove with boundary partition $\sigma_K$.  First we observe that all but a constant number of grove components $\Gamma_{i,(i-1)'}$ are shortest paths.  
Furthermore, as $i$ goes from $2$ to $K$, the shapes of the shortest paths are locally constant, and can only change when we encounter a grove component which is not a shortest path. 
 Furthermore, as we go down, the shape of the shortest path can only become lower.  We shall call this part of $\Gamma$ the lower half of the grove.  Note that there are $m$ 
different shapes $S_1,S_2,\ldots,S_m$ of shortest paths, listed from highest to lowest, see Figure \ref{fig:elec4}.
\begin{figure}[h!]
    \begin{center}
    \input{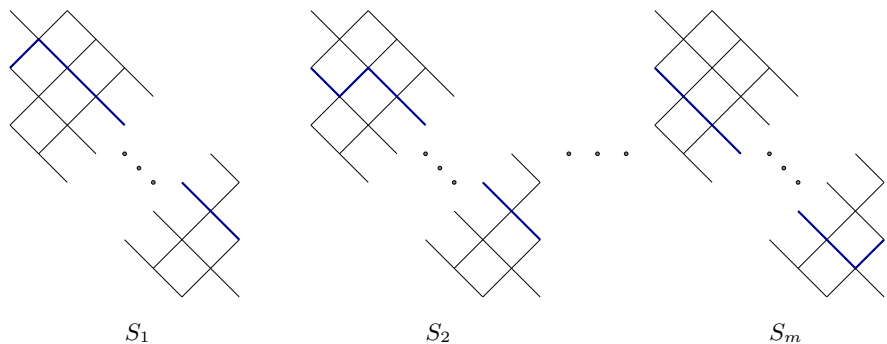}
    \end{center}
    \caption{The shapes of shortest paths connecting $i+1$ to $i'$.}
    \label{fig:elec4}
\end{figure}
Let $A'_\sigma \subset A_\sigma$ denote the subset of groves where $\Gamma_{1,0,0'}$ uses the edge $e$, and let $A''_\sigma$ denote the subset of groves where $\Gamma_{1,0,0'}$ does 
not use the edge $e'$.  Given $\epsilon > 0$, we shall now show that for sufficiently large $K$, one has $\wt(A''_\sigma)/\wt(A'_\sigma) < \epsilon$.  

Let $\Gamma \in A''_\sigma$.  Then the shortest paths which occur in the lower half of $\Gamma$ can only use the shapes $S_2,S_3,\ldots,S_m$.  Pick $K$ sufficiently large that we are 
guaranteed to have $Cnm^2/\epsilon$ grove components in the lower half which are shortest paths, where $C$ is a constant (not depending on $K$ or $N$) we shall describe below.  
Then there is some $S_i$ (say pick the least such $i$) which occurs at least $Cnm/\epsilon$ times.  Define a set of new groves $\{\Gamma'_j \mid j = 1,2,\ldots,m/\epsilon\}$ by: 
\begin{enumerate}
\item 
removing $jn$ of the $S_i$ shaped paths, and shifting the lower half of $\Gamma$ downwards to fill in the removed area;
\item
replacing the grove components $\Gamma_{-M,(-M)'},\Gamma_{-M+1,(-M+1)'},\ldots,\Gamma_{-1,(-1)'}$ with shortest paths;
\item
replacing $\Gamma_{0,0',1}$ with the shortest path from $0$ to $0'$ union the edge $e$;
\item
adding $jn-1$ new shortest paths $\Gamma'_{2,1'},\ldots,\Gamma'_{jn,(jn-1)'}$ which are $S_1$-shaped;
\item
adding one extra ``transition'' grove component $\Gamma'_{jn+1,(jn)'}$ which is $S_1$-shaped above, but which correctly fits with $\Gamma_{2,1'}$ below.
\end{enumerate}

\begin{figure}[h!]
    \begin{center}
    \input{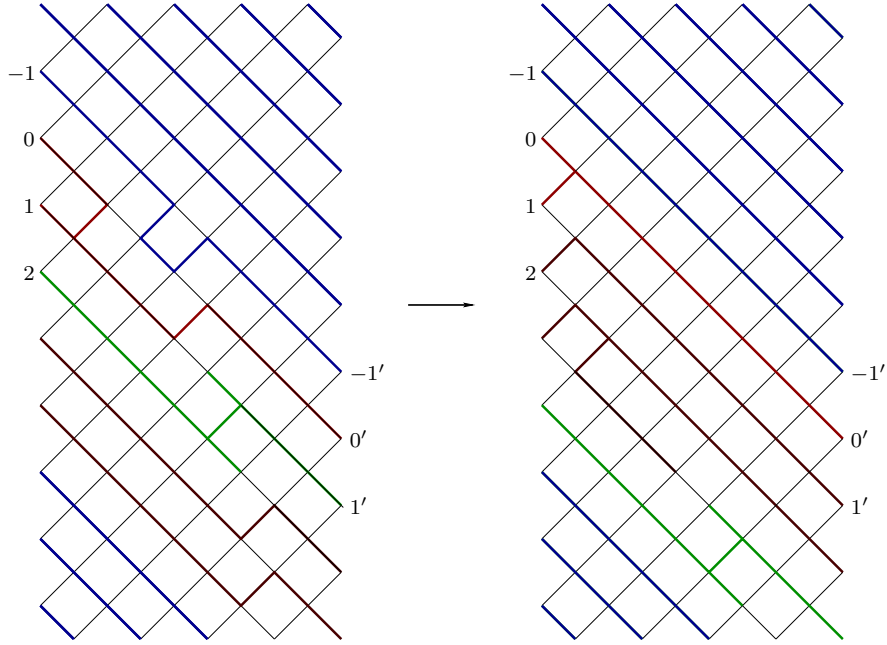}
    \end{center}
    \caption{Creation of a new grove $\Gamma'_j$ from $\Gamma$.}
    \label{fig:elec1}
\end{figure}
Figure \ref{fig:elec1} illustrates the procedure. In this case $m=9$, $n=2$, $j=1$, $i=7$. The $\Gamma_{1,0,0'}$ component of the groves is shown in red, the part shifted down is
shown in green. The two removed groves of shape $S_i$ and the two new groves of shape $S_1$ that replaced them are shown in brown.

Note that $\Gamma'_j$ belongs to $A'_\sigma$.
Since apart from the shortest paths the rest of the grove has only been modified at a bounded number of edges, by Lemma \ref{L:optimalpath} there is a constant $C_1$ not depending
 on $K, N$, or $\epsilon$ such that $\wt(\Gamma'_j) \geq C_1 \wt(\Gamma)$.  Furthermore, each grove in $A'_\sigma$ can occur in this way in at most $C_2m$ different ways.  
The $C_2$ counts the possible grove components $\Gamma_{-M,(-M)'},\Gamma_{-M+1,(-M+1)'},\ldots,\Gamma_{-1,(-1)'}$ and $\Gamma_{0,0',1'}$ of which there is a universal bound for.  
The $m$ ways count the possible $i$ such that the shortest paths $S_i$ are being replaced by $S_1$.  Setting $C = C_1 C_2$, we conclude that 
$\wt(A''_\sigma)\leq \epsilon \wt(A'_\sigma)$.  For every $\Theta \in A_\tau$, the grove $\Theta \cup \{e\}$ lies in $A_\sigma$, and this map is an injection from 
$A_\tau$ to $A_\sigma$ which changes the weight of each grove by exactly $a$.  On the other hand every $\Gamma \in A'_\sigma$ is in the image of this map.  It follows that 
$a \leq \wt(A_\sigma)/\wt(A_\tau) \leq a(1 + \epsilon)$.  Letting $K, N \to \infty$, we obtain the statement of the theorem.
\end{proof}

The partition $\tau_K$ is a planar partition of $\Z \cup \Z'$ of the type described in Theorem \ref{T:upr}(2), where all but finitely many vertices are isolated.  Define $\UPr(\tau)$ 
by taking the polynomial of Theorem \ref{T:upr}(2) and replacing $L_{ij}(G)$ by $\L_{ij}$. Then we have
$$
\UPr(\tau) = \lim_{N \to \infty}\uPr(\tau)_{G(N)}
$$ and in particular, $\UPr(\tau)$ can be approximated arbitrarily well on some $G(N)$ for very large $N$.

Unfortunately, this is not the case for the partition $\sigma_K$, which contains a part of size three, for which Theorem \ref{T:upr}(2) cannot be applied.  Instead one has
$$
\uPr(\sigma_K)_{G(N)} = p_N(L_{ij}(G(N)))
$$
for the sequence of polynomials $p_N$ of Theorem \ref{T:upr}(1).  These polynomials depend on $\sigma_K$, which is suppressed from the notation.  

\begin{prop} \label{P:polyest}
Suppose $N > N'$.  Then $p_N - p_{N'}$ is a polynomial in the $L_{ij}$'s  such that every monomial involves some $L_{ij}$ where $N \geq |i| > N'$ and $j \in [-K,K]$.  Furthermore, there is some constant $c_K$ such that the coefficient of each monomial in $p_N$ is less than $c_K$.
\end{prop}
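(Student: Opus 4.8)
The plan is to run the Kenyon--Wilson forest expansion of $\uPr(\sigma_K)$ and then read off the claimed structure from Lemma \ref{L:singles} and Lemma \ref{L:combtype}, the former applied to several carefully chosen cyclic intervals. (We may and do restrict to the regime $N>N'\ge K$, which is the only one used.) First I would write, using Theorem \ref{T:upr}(1) together with Kenyon--Wilson's Rule~1 expansion, $p_N=\sum_\tau P_{\sigma_K,\tau}\,L_\tau$, where $\tau$ ranges over set partitions of the boundary of $G(N)$ and $L_\tau=\sum_F\prod_{\{a,b\}\in F}L_{ab}$, with $F$ running over spanning forests of the complete graph on the boundary of $G(N)$ whose tree components span the parts of $\tau$. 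As already noted in the proof of Lemma \ref{L:combtype}, a monomial in the $L_{ab}$'s determines the forest $F$ producing it, hence determines $\tau(F)$; so distinct $L_\tau$ share no monomials, and the coefficient of the monomial of a forest $F$ in $p_N$ is exactly $P_{\sigma_K,\tau(F)}$.

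Next I would isolate $p_{N'}$. Call $F$ small if every edge of $F$ has both endpoints of level $\le N'$. For such $F$, the partition $\tau(F)$ is a partition of the boundary of $G(N')$ together with all remaining (far) boundary vertices of $G(N)$ as singletons; since adjoining isolated singletons does not change which planar partitions appear under Rule~1, and since $\sigma_K$ on $G(N)$ is $\sigma_K$ on $G(N')$ together with those far singletons, we get $P_{\sigma_K,\tau(F)}$ equal to the corresponding Kenyon--Wilson coefficient for $G(N')$. Hence the small forests reproduce $p_{N'}$ exactly, and by the no-shared-monomials fact there is no cancellation between small and non-small monomials. Thus $p_N-p_{N'}$ is the sum of $P_{\sigma_K,\tau(F)}\prod_{e\in F}L_e$ over forests $F$ that are not small, and it remains to show (i) every non-small $F$ uses an edge $\{i,j\}$ with $N\ge|i|>N'$ and $j\in\{-K,\dots,K\}$, and (ii) $|P_{\sigma_K,\tau(F)}|$ is bounded by a constant depending only on $K$ (with $n,m$ fixed).

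The crux is (i), a geometric analysis of the parts of a contributing $\tau$. Let $T$ be the support of the non-singleton parts of $\sigma_K$; it is the disjoint union of the unprimed cyclic interval $T_L=\{-K,\dots,K\}$ and a cyclic interval $T_R$ of primed vertices, and the complement of $T$ in the boundary of $G(N)$ is a union of two cyclic arcs $A_1\sqcup A_2$, each running once from $T_L$ to $T_R$. Every far vertex (level $>N'\ge K$) lies in $A_1\cup A_2$. The claim is that if $P_{\sigma_K,\tau}\ne0$ then the part of $\tau$ containing a far vertex $v\in A_1$ (the case $v\in A_2$ being symmetric) is contained in $\{v\}\cup T_L$. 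To prove this I would apply Lemma \ref{L:singles}: taking $S=A_1$, since $\sigma_K$ has no part with two vertices in $A_1$, no part of $\tau$ has two vertices in $A_1$; taking $S$ to be the cyclic interval running from $v$ out the $T_R$-end of $A_1$, through all of $T_R$, and into $A_2$ as far as a prospective partner $u\in A_1\cup A_2\cup T_R$, one has $S\cap T_L=\emptyset$, so $S$ meets every part of $\sigma_K$ — even the size-three part $\{1,0,0'\}$ — in at most one vertex, whence $v$ and $u$ cannot lie in a common part of $\tau$. Consequently the tree component of $F$ through $v$ lives on $\{v\}\cup T_L$; since $F$ is non-small it has an edge with an endpoint $i$ of level $>N'$ (necessarily of level $\le N$), and then $i$ is non-isolated in $F$, so some $F$-neighbour $j$ of $i$ lies in $T_L=\{-K,\dots,K\}$, giving the required edge $\{i,j\}$.

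Finally, (ii) is immediate from Lemma \ref{L:combtype} applied to $\sigma_K$, whose non-singleton support has at most $4K+1$ elements: this yields a constant $c_K$ bounding $|P_{\sigma_K,\tau}|$ for all $\tau$ and all $N$, hence bounding every coefficient of $p_N$. I expect the third paragraph to be the main obstacle: it requires exhibiting, for each far vertex and each prospective partner, a single cyclic interval witnessing the hypothesis of Lemma \ref{L:singles}, and the decisive point is that routing such an interval through the primed arc $T_R$ avoids $T_L$ altogether — and in particular avoids picking up two elements of the part $\{1,0,0'\}$ — so that the planarity of $\sigma_K$ forces exactly the separation needed.
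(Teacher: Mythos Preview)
Your argument is correct and follows the same route the paper indicates: the second statement is exactly Lemma~\ref{L:combtype}, and for the first you unpack what ``Lemma~\ref{L:singles} and our choice of $\sigma_K$'' means by expanding $p_N=\sum_\tau P_{\sigma_K,\tau}L_\tau$, identifying the small forests with $p_{N'}$, and then using Lemma~\ref{L:singles} with several cyclic intervals to pin down the part of $\tau$ through a far vertex. The paper's proof is a two-line pointer to these lemmas; you have supplied the details, and in fact your conclusion that the neighbour $j$ lies in the \emph{unprimed} interval $T_L=\{-K,\dots,K\}$ is slightly sharper than the stated $j\in[-K,K]$. The only point worth flagging is the bookkeeping around the extra boundary vertices of the truncations (those not in $\Z\cup\Z'$), which both you and the paper elide; this does not affect the substance of the argument.
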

\begin{proof}
The first statement follows from Lemma \ref{L:singles} and our choice of $\sigma_K$.  The second statement is Lemma \ref{L:combtype}. 
\end{proof}

The assymmetry of the roles of $i$ and $j$ in Proposition \ref{P:polyest} is accounted for by the fact that $L_{ij}$ is symmetric.  That is, we treat $L_{ij} = L_{ji}$ as the same variable.  Note that we already know $\lim_{N \to \infty}\uPr(\tau_K)_{G(N)}$ approaches a limit, and it follows from Theorem \ref{thm:asw} that $\lim_{N \to \infty} \uPr(\sigma_K)_{G(N)}$ approaches a limit as well.
 
\begin{lemma}\label{L:AK}
For each $K$ and $\epsilon > 0$, one can find some $A$ such that
$$
|\lim_{N \to \infty} \uPr(\sigma_K)_{G(N)} - p_{A}(\L_{ij})| < \epsilon.
$$
\end{lemma}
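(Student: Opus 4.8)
The plan is a diagonal approximation: first replace the response entries $L_{ij}(G(N))$ appearing in $\uPr(\sigma_K)_{G(N)}=p_N(L(G(N)))$ (writing $L(G(N))$ for the tuple of entries $L_{ij}(G(N))$) by their limits $\L_{ij}$ while keeping the polynomial $p_N$ fixed, and only afterwards truncate $p_N$ to $p_A$. Write $L^\ast:=\lim_{N\to\infty}\uPr(\sigma_K)_{G(N)}$, which exists and is finite as noted just before the statement (using Theorem \ref{thm:asw}). For any $A$ large and any $N>A$,
\[
|L^\ast-p_A(\L_{ij})|\ \le\ |L^\ast-\uPr(\sigma_K)_{G(N)}|\ +\ |(p_N-p_A)(L(G(N)))|\ +\ |p_A(L(G(N)))-p_A(\L_{ij})|,
\]
where $p_A(\L_{ij})$ means $p_A$ evaluated with each variable $L_{ij}$ set to $\L_{ij}$. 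As $N\to\infty$ the first term tends to $0$ by definition of $L^\ast$, and the third term tends to $0$ because $p_A$ involves only the finitely many variables $L_{ij}$ with $|i|,|j|\le A$ and $L_{ij}(G(N))\to\L_{ij}$ for each of them (Theorem \ref{T:universalresponse}). So it suffices to show that for each $\epsilon>0$ one can pick $A$ with $|(p_N-p_A)(L(G(N)))|<\epsilon$ \emph{uniformly in $N>A$}; letting $N\to\infty$ in the displayed inequality then gives the lemma.

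The core of the argument is therefore a uniform bound on the tail polynomial $p_N-p_A$ evaluated at $L(G(N))$. I would assemble three structural facts. First, each $p_N$ is homogeneous of one and the same degree $2K+1$ in the $L_{ij}$: by Theorem \ref{T:upr}(1) this degree equals (number of boundary vertices of $G(N)$) minus (number of parts of $\sigma_K$), and adjoining an extra singleton boundary vertex increases both by $1$. Second, by Proposition \ref{P:polyest} every monomial of $p_N-p_A$ has coefficient at most $c_K$ and contains a factor $L_{i_0j_0}$ with $N\ge|i_0|>A$ and $j_0\in[-K,K]$. Third, letting $T$ be the finite set of boundary vertices lying in non-singleton parts of $\sigma_K$, Lemmas \ref{L:singles} and \ref{L:combtype} show that such a monomial comes from a spanning forest $F$ (one tree per part) for a partition $\tau$ of one of only $O_K(1)$ combinatorial types, whose non-singleton parts span only $O_K(1)$ boundary vertices in total, at most $O_K(1)$ of which lie outside $T$ (one per cyclic arc of the complement of $T$). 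Hence such a monomial is determined by $O_K(1)$ combinatorial data (the type of $\tau$, the forest $F$, the placement of the vertices of $T$) together with a bounded number of ``free'' endpoints roaming over the arcs; and by the second fact, a free endpoint $i_0$ with $|i_0|>A$ must be joined in $F$ to a vertex $j_0\in T$ (a monomial with no such edge survives unchanged into $p_A$).

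With this in hand the estimate is routine. From the monotone convergence established in Theorem \ref{T:universalresponse} one has $0\le L_{ij}(G(N))\le\L_{ij}\le D$ with $D:=\max_{\bar i,\bar j}L_{\bar i\bar j}(G)<\infty$, and --- as in the same proof, from $\L_{ij}\le L_{\bar i\bar j}(G)$ and $\sum_k L_{i,j_k}(G[m])=L_{\bar i\bar j}(G)$ --- one gets $\sum_i\L_{ij}\le\sum_{\bar i}L_{\bar i\bar j}(G)<\infty$ for each fixed $j$, so $\delta_A(j):=\sum_{|i|>A}\L_{ij}\to0$ as $A\to\infty$. Now bound $|(p_N-p_A)(L(G(N)))|$ by $c_K$ times the sum of its monomials at $L(G(N))$; grouping by the $O_K(1)$ shapes and summing out the free endpoints, the far endpoint $i_0$ contributes $\sum_{|i_0|>A}L_{i_0j_0}(G(N))\le\delta_A(j_0)$ (its other incident edges contributing factors $\le D$), each other free endpoint contributes a factor that sums to a finite constant ($\sum_v\L_{vj'}<\infty$ for some $j'\in T$, or $\sum_v\L_{uv}\le\max_{\bar u}\sum_{\bar v}L_{\bar u\bar v}(G)<\infty$), and all remaining factors are $\le D$. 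This gives $|(p_N-p_A)(L(G(N)))|\le C_K\max_{|j_0|\le K}\delta_A(j_0)$ for a constant $C_K$ depending only on $K$ and $G$, uniformly in $N$, which is $<\epsilon$ once $A$ is large enough.

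The only real obstacle is exactly this uniform tail bound: since $p_N-p_A$ may have an unbounded number of monomials as $N$ grows, one cannot simply bound ``number of monomials times size of a monomial''. It is resolved by combining (i) the bounded-combinatorial-type restriction of Lemmas \ref{L:singles} and \ref{L:combtype}, which confines each tail monomial to $O_K(1)$ essential choices plus a bounded number of roaming endpoints; (ii) the monotonicity $L_{ij}(G(N))\le\L_{ij}$ from Theorem \ref{T:universalresponse}, which converts every estimate into one in the fixed universal response matrix, uniformly in $N$; and (iii) the row-summability of $\L$, which makes the contribution of the far endpoint a genuinely vanishing tail. One should also check the minor point --- automatic from Proposition \ref{P:polyest} --- that the far endpoint is always attached to a near vertex, so that the governing tail sum ranges over a single index.
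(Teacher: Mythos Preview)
Your proof is correct and follows the same approach as the paper: the identical three-term triangle inequality, with the key uniform tail bound on $|(p_N-p_A)(L(G(N)))|$ obtained from Proposition~\ref{P:polyest} together with the row-summability of $\L$ coming from Theorem~\ref{T:universalresponse}. Your treatment of that tail bound is in fact more explicit than the paper's --- you sum over the free endpoints using the bounded-combinatorial-type structure of Lemmas~\ref{L:singles} and~\ref{L:combtype}, whereas the paper compresses this into the single displayed inequality $|(p_N-p_A)|\le c_K\bigl(\sum_{|i|>A,\,|j|\le K}L_{ij}\bigr)(\max_{a,b} L_{ab})^{2K}$ --- but the strategy is identical.
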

\begin{proof}
Fix $K$.  We first show that there is $A$ such that for all $N>A$ we have $$|p_{N}(L_{ij}(G(N))) - p_A(L_{ij}(G(N)))| < \epsilon/3.$$

It is known that the polynomials $p_N$ have degree $2K+1$, see Section \ref{sec:KWpoly}.  By Proposition \ref{P:polyest}, every monomial in $p_N - p_A$ has a factor $L_{ij}$ where $|i| > A$ and $|j| \leq K$, and has coefficient $\leq c_K$.  Thus 
$$
|p_{N}(L_{ij}(G(N))) - p_A(L_{ij}(G(N)))|  \leq \left(\sum_{|i|>A,\;|j| \leq K} L_{ij}\right) \;(\max_{a,b}(L_{ab}))^{2K}\; c_K.
$$
By the proof of Theorem \ref{T:universalresponse}, we know that for each $i$ we have $\sum_j \L_{ij} < \infty$.  Thus it is possible to find $A$ large enough that $$\left(\sum_{|i|>A,\;|j| \leq K} L_{ij}\right) < \frac{\epsilon}{3(\max_{a,b}(L_{ab}))^{2K}\; c_K},$$ giving us \begin{equation}\label{E:ineq1}|p_{N}(L_{ij}(G(N))) - p_A(L_{ij}(G(N)))| < \epsilon/3.\end{equation}

Since only finitely many $L_{ij}$'s appear in $p_A$, for sufficiently large $N$ we have \begin{equation}\label{E:ineq2}p_{A}(\L_{ij}) - p_A(L_{ij}(G(N))) < \epsilon/3.\end{equation} Furthermore, for sufficiently large $N$ we have \begin{equation}\label{E:ineq3}|\uPr(\sigma_K)_{G(N)} -\lim_{N' \to \infty} \uPr(\sigma_K)_{G(N')}| < \epsilon/3.\end{equation} Finally, we combine the three estimates \eqref{E:ineq1},\eqref{E:ineq2}, \eqref{E:ineq3} and use $\uPr(\sigma_K)_{G(N)} = p_N(L_{ij}(G(N)))$.
\end{proof}

\begin{theorem} \label{thm:main}
Fix $G$, satisfying $R_1 \geq R_2 \geq \cdots \geq R_m$ and fix one of the edges connected to one of the vertices $\ldots,-2,-1,0,1,2,\ldots$. 
There is a sequence of polynomials (depending on the universal response matrix of $G$), $p_{A_1},p_{A_2},\ldots$, and $q_1,q_2,\ldots$ such that
$$
\lim_{K \to \infty} \frac{p_{A_K}(\L_{ij})}{q_K(\L_{ij})} = a
$$
where $a$ is the weight of the chosen edge.  
\end{theorem}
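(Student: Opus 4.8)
The plan is to assemble Theorem~\ref{thm:asw} and Lemma~\ref{L:AK} by a diagonal argument, taking $q_K := \UPr(\tau_K)$ for the denominators. First I would reduce to the situation of Theorem~\ref{thm:asw}, where the chosen edge is the high edge incident to the vertex $1$: the high edge at any other left-boundary vertex is handled by translating the defect of the partitions $\tau_K,\sigma_K$ along the strip (the shortest-path shapes $S_1,\dots,S_m$ and the radii $R_k$ are translation-invariant, so the proof of Theorem~\ref{thm:asw} goes through verbatim), and the low edge at a vertex is handled by the analogous argument in which that vertex is merged into the adjacent pair of $\tau_K$ on the other side rather than into $\{0,0'\}$, the hypothesis $R_1\geq\cdots\geq R_m$ still being the relevant one by Lemma~\ref{L:optimalpath}. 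I will suppress these routine variants and assume the chosen edge is the high edge at $1$, of weight $a$.

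Set $q_K := \UPr(\tau_K)$, the polynomial in the $\L_{ij}$ furnished by Theorem~\ref{T:upr}(2); by definition $q_K(\L_{ij}) = \lim_{N\to\infty}\uPr(\tau_K)_{G(N)}$, and this number is positive (each $\uPr(\tau_K)_{G(N)}>0$ since $G(N)$ carries a grove of boundary type $\tau_K$, and one checks the limit is positive too). For fixed $K$ the limit $\alpha_K := \lim_{N\to\infty}\uPr(\sigma_K)_{G(N)}$ also exists, as recorded after Proposition~\ref{P:polyest}, so the limit of a ratio is the ratio of the limits and Theorem~\ref{thm:asw} may be rewritten as
$$
\lim_{K\to\infty}\frac{\alpha_K}{q_K(\L_{ij})} = a.
$$
Now the diagonal step: for each $K$, apply Lemma~\ref{L:AK} with $\epsilon = q_K(\L_{ij})/K$ to obtain a truncation level $A_K$ and the associated polynomial $p_{A_K}$ (the Theorem~\ref{T:upr}(1) polynomial for $\sigma_K$, evaluated at $\L$) with
$$
\bigl|\alpha_K - p_{A_K}(\L_{ij})\bigr| < q_K(\L_{ij})/K .
$$
Dividing the last inequality by the positive number $q_K(\L_{ij})$ gives $\bigl|p_{A_K}(\L_{ij})/q_K(\L_{ij}) - \alpha_K/q_K(\L_{ij})\bigr| < 1/K$, and combining this with the displayed limit yields $\lim_{K\to\infty} p_{A_K}(\L_{ij})/q_K(\L_{ij}) = a$, which is the statement of the theorem.

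All the substance is carried by Theorem~\ref{thm:asw} and Lemma~\ref{L:AK}; what remains is just the correct orchestration of three nested limits --- over the truncation size $N$ of $G(N)$, over the partition size $K$, and over the auxiliary level $A$ hidden inside $p_A$ --- so that at the end only $K$ varies and honest polynomials in the $\L_{ij}$ appear. The one point that makes this more than a one-line corollary, and the main thing to watch, is that $\sigma_K$ has a part of size three, so $\uPr(\sigma_K)$ is \emph{not} a polynomial in $\L$ (Theorem~\ref{T:upr}(2) fails for it); this is exactly why one cannot substitute $\L$ into a single Kenyon--Wilson polynomial but must approximate $\alpha_K$ by $p_{A_K}(\L_{ij})$ through Lemma~\ref{L:AK}, and why the tolerance must be taken to be $q_K(\L_{ij})/K$ --- only after $q_K(\L_{ij})$ is known --- so that the approximation error survives division by $q_K(\L_{ij})$. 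A secondary point to verify is that $\UPr(\tau_K)\neq 0$, so that the quotient is defined, which is the positivity noted above.
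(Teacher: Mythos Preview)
Your proof is correct and follows essentially the same approach as the paper's: reduce by symmetry to the high edge at vertex $1$, take $q_K$ to be the Kenyon--Wilson polynomial for $\tau_K$ from Theorem~\ref{T:upr}(2), choose $p_{A_K}$ via Lemma~\ref{L:AK} with tolerance $\epsilon_K$ satisfying $\epsilon_K/q_K(\L_{ij})\to 0$, and invoke Theorem~\ref{thm:asw}. Your explicit choice $\epsilon_K = q_K(\L_{ij})/K$ is a concrete instance of the paper's condition, and your added remarks on why $\sigma_K$'s three-element part forces the use of Lemma~\ref{L:AK}, and on the positivity of $q_K(\L_{ij})$, make explicit points the paper leaves implicit.
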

\begin{proof}
By symmetry, it is enough to establish the formula for the high edge connected to $1$.  The polynomial $q_K$ is the one associated to $\tau_K$ from Theorem \ref{T:upr}(2).  The polynomial $p_{A_K}$ is chosen via Lemma \ref{L:AK} so that $|p_{A_K}(\L_{ij}) - \lim_{N \to \infty} \uPr(\sigma_K)_{G(N)}|<\epsilon_K$, where $\epsilon_K$ is chosen so that $\frac{\epsilon_K}{q_K(\L_{ij})} \to 0$ as $K \to \infty$.  Finally, we apply Theorem \ref{thm:asw}.
\end{proof}

\section{The Inverse Dirichlet-to-Neumann problem on a cylinder}
\subsection{Solution to inverse problem for the networks $N(m)$}

\begin{lemma} \label{lem:wh}
 Assume a cylindrical network $Y=N(m)$ is obtained by concatenating two networks $X=N(1)$ and $X'=N(m-1)$. Then knowing $X$ and the universal response matrix of $Y$, one can recover the universal response matrix of $X'$.
\end{lemma}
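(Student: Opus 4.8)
The plan is to realize the universal response matrix $\L^Y$ of $Y$ as an explicit image of the universal response matrix $\L^{X'}$ of $X'$ under the operation ``glue $X=N(1)$ on the left and take the Schur complement at the interface'', and then to invert that operation using the known edge data of $X$. Label the vertices of $Y(\infty)$ in three groups: the left boundary $L=\Z$ (the vertices $M^{(0)}_i$), the interface $M=\{M^{(1)}_i\}_{i\in\Z}$, and the right boundary $\Z'$; note that in $Y$ the vertices of $M$ are interior. Since $X=N(1)$ has no interior vertices, its universal response matrix equals its Kirchhoff matrix, which in $L,M$ block form is
$$
\L^X=\begin{pmatrix} A & B\\ B^T & D\end{pmatrix},
$$
where, writing $a_i=w_{M^{(0)}_i M^{(1)}_{i-1}}$ and $b_i=w_{M^{(0)}_i M^{(1)}_i}$, we have $A=\mathrm{diag}(-(a_i+b_i))$, $D=\mathrm{diag}(-(b_i+a_{i+1}))$, and $B$ is the bidiagonal matrix with $B_{M^{(0)}_i,M^{(1)}_i}=b_i$ and $B_{M^{(0)}_i,M^{(1)}_{i-1}}=a_i$; all of these are known. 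Write the universal response matrix of $X'=N(m-1)$, on $M\cup\Z'$, as $\L^{X'}=\begin{pmatrix} P & Q\\ Q^T & R\end{pmatrix}$ with $P$ the $M$--$M$ block. Because $Y$ is the union of $X$ and $X'$ along $M$, and $M$ is interior in $Y$, transitivity of Schur complements (\cite[(3.7)]{CM}, as already used in Lemma \ref{lem:Smon}) gives, first on each truncation and hence in the limit,
$$
\L^Y=\begin{pmatrix} A & 0\\ 0 & R\end{pmatrix}-\begin{pmatrix} B\\ Q^T\end{pmatrix}(D+P)^{-1}\begin{pmatrix} B^T & Q\end{pmatrix}.
$$

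From this the recovery is immediate at the formal level. Write $\L^Y_{LL},\L^Y_{LR},\L^Y_{RR}$ for the three blocks, all of which are known. The $LL$-block reads $B(D+P)^{-1}B^T=A-\L^Y_{LL}$; since $B$ is bidiagonal with nonzero diagonal it is invertible, so $(D+P)^{-1}=B^{-1}(A-\L^Y_{LL})(B^T)^{-1}$ is determined, hence so is $D+P$, hence $P=(D+P)-D$. The $LR$-block then yields $Q=-(D+P)\,B^{-1}\,\L^Y_{LR}$, and the $RR$-block yields $R=\L^Y_{RR}+Q^T(D+P)^{-1}Q$. Thus all three blocks of $\L^{X'}$ are recovered from $\L^Y$ together with the data of $X$, which is the assertion of the lemma.

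The only real work is to make these doubly-infinite matrix manipulations rigorous and compatible with the limit defining the universal response matrices. I would carry out every step on the finite truncations $G(N)$ of $Y$: there $B$ is an honest invertible bidiagonal matrix, $D+P$ is invertible (up to sign it is a grounded-Laplacian-type block, equivalently the interior block whose invertibility underlies the very existence of the response matrix of the truncation), and the displayed block identity holds exactly. One then checks that each of the three formulas for $P$, $Q$, $R$ converges entrywise as $N\to\infty$. For a fixed entry of $B^{-1}$ this is clear: after reordering so that $B$ is lower triangular, enlarging the truncation only appends rows and columns at the two ends, which does not disturb the already-computed entries of the inverse, so $B^{-1}$ stabilizes. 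For the matrix products one uses that each row of $A-\L^Y_{LL}$ and each column of $Q$ is summable, which follows from the finiteness of $\sum_j\L_{ij}$ established in the proof of Theorem \ref{T:universalresponse}; convergence of $Q^T(D+P)^{-1}Q$ is then forced, since it equals $R-\L^Y_{RR}$ with both terms finite. I expect the fussiest point to be the bookkeeping at the boundary of the truncation — ensuring that the $O(1)$ mismatch between the number of included $L$-vertices and $M$-vertices, together with other truncation artifacts, does not leak into any fixed entry in the limit — but this is routine given the monotonicity supplied by Lemma \ref{lem:Smon}.
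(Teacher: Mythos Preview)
Your argument is correct in outline but takes a genuinely different route from the paper. The paper never writes down a Schur complement: instead it concatenates $Y$ on the left with a ``virtual'' copy $Z=N(1)$ whose edge weights are the \emph{negatives} of those of $X$, and then shows by a $Y$--$\Delta$ trick (insert parallel edges with weights $L,-L$ between opposite vertices of $Z$ and $X$, turn triangles into stars, and let $L\to\infty$) that $Z$ and $X$ cancel, leaving a network with the same response as $X'$. Since gluing a fixed finite-width strip $Z$ to $Y$ is a local operation, the universal response matrix of the glued network is determined by that of $Y$ together with the (known) data of $Z$, and hence $\L^{X'}$ is recovered.

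What the two approaches buy: the paper's argument stays entirely inside the language of local electrical moves already developed, and in particular never needs to make sense of infinite matrix products; gluing $Z$ is local and the limit issues are already handled by Theorem~\ref{T:universalresponse}. Your approach is more explicit and would actually produce closed formulas for $P,Q,R$, which is attractive. The cost is exactly at the point you flag as ``fussiest'': the entries of the lower-triangular inverse $B^{-1}$ grow geometrically like products of $a_i/b_i$, so the convergence of $B^{-1}(A-\L^Y_{LL})(B^{T})^{-1}$ does \emph{not} follow from mere row-summability of $\L^Y$ as asserted --- you need exponential decay of $\L^Y_{LL}$ at a rate that beats this growth. For the specific networks $N(m)$ such decay does hold, but establishing it is additional work beyond what Theorem~\ref{T:universalresponse} provides. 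The paper's network-level cancellation avoids this analytic detour entirely.
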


\begin{proof}
 Assume the conductances in $X$ are $a_i$ for the high edges and $b_i$ for the low edges. Concatenate $Y$ with a network $Z = N(1)$ with (virtual) conductances $-a_i$ for low edges and $-b_i$ for high edges. 
We claim that the response of the resulting network is equal to that of $X'$. Indeed, connect the opposite vertices of $Z$ and $X$ by edges with conductances $L$ and $-L$, 
without changing the response.  Changing resulting triangles into stars using the $Y-\Delta$ transformation, and letting $L \to \infty$, we see that there is an infinite conductance between opposite vertices and zero conductance between other pairs.
Thus the two $N(1)$ networks effectively cancel each other out, and we are left with a network with the same response as $X'$.  See Figure \ref{fig:elec9}.
\begin{figure}[h!]
    \begin{center}
    \input{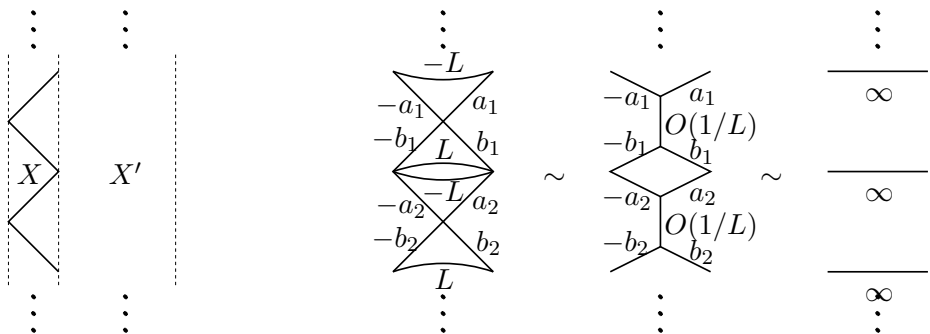}
    \end{center}
    \caption{Attaching $Z$ to $Y$ effectively removes the leftmost layer $X$.}
    \label{fig:elec9}
\end{figure}
\end{proof}

\begin{theorem} \label{thm:sol}
  There are generically $m!$ sets of edge conductances which produce a given universal response matrix for the network $N(m)$.  
All the solutions are connected by the $S_m$-action via the electrical $R$-matrix.
\end{theorem}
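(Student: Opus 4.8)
The plan is to combine three ingredients already available in the excerpt: the $S_m$-action via the electrical $R$-matrix, which manufactures many solutions from one; Theorem~\ref{thm:main}, which recovers individual edge conductances from the universal response matrix once the radii are weakly decreasing; and Lemma~\ref{lem:wh}, which peels off the first layer and lets us induct on $m$.

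The first step is to record how the $R$-matrix interacts with the radii $R_1,\dots,R_m$. The transposition $s_k\in S_m$ acts on $N(m)$ via $R_{k,k+1}$, which preserves the underlying graph and alters only the conductances in layers $k$ and $k+1$. A short computation from the explicit formulas of Theorem~\ref{T:elecwhirl}, using $\prod_i\tau_i=\prod_i\tau_{i+1}$ so that the products of the new conductances telescope, shows that $s_k$ sends $(R_k,R_{k+1})\mapsto(R_{k+1},R_k)$ and fixes the remaining radii; hence $S_m$ acts on the tuple $(R_1,\dots,R_m)$ by the standard permutation action. Combined with Corollary~\ref{cor:braid} and Corollary~\ref{cor:resp}, this already shows that every solution of the inverse problem generates an $S_m$-orbit of solutions with the same universal response matrix, and that for generic conductances (all $R_k$ distinct) the stabilizer is trivial, so the orbit has exactly $m!$ elements.

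The second step is the uniqueness statement: if $G=N(m)$ has conductances satisfying $R_1\geq R_2\geq\cdots\geq R_m$, then these conductances are determined by the universal response matrix $\L$ of $G$. This is an induction on $m$. In the inductive step, Theorem~\ref{thm:main} expresses the conductance of each edge incident to a boundary vertex $\ldots,-1,0,1,2,\ldots$ as a limit of explicit rational functions of $\L$; since each such vertex has exactly its two first-layer edges incident to it, ranging over all of them (and using periodicity) recovers every conductance of the first layer $X=N(1)$. Knowing $X$ and $\L$, Lemma~\ref{lem:wh} produces the universal response matrix $\L'$ of the remaining network $X'=N(m-1)$, whose radii are $R_2\geq\cdots\geq R_m$, and the inductive hypothesis applied to $X'$ recovers the rest. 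The base case $m=1$ is immediate from Theorem~\ref{thm:main} (or from the planar case).

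Finally I would assemble these. Given a universal response matrix $\L$ realized by $N(m)$ with generic conductances, let $S(\L)$ denote the set of all realizing conductance assignments; $S_m$ acts on $S(\L)$ by the first step. Any $w\in S(\L)$ has distinct radii generically, hence a unique $S_m$-translate $w^*$ with $R_1>\cdots>R_m$; by the uniqueness statement $w^*$ is independent of the choice of $w$, so $S(\L)=S_m\cdot w^*$, a free orbit of size $m!$, and all solutions are connected by the $S_m$-action, as claimed. The conceptual skeleton here is short; the genuine weight of the proof — already carried in the excerpt — lies in Theorem~\ref{thm:main} and the Kenyon--Wilson grove estimates behind it (Theorem~\ref{thm:asw}, Lemmas~\ref{L:short}, \ref{L:optimalpath}, \ref{L:AK}) together with Lemma~\ref{lem:wh}, so the main things left to verify carefully for the theorem itself are the radius computation for the $R$-matrix and the tracking of the genericity (distinct radii, nonvanishing denominators in the $R$-matrix formulas) through the induction. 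I expect the radius bookkeeping for the $R$-matrix to be the only real subtlety, since it is what licenses using $R_1\geq\cdots\geq R_m$ as a fundamental domain for the $S_m$-action.
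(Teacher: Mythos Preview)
Your proposal is correct and follows essentially the same approach as the paper: observe that the $R$-matrix permutes the radii $R_k$, use the $S_m$-action to put them in non-increasing order, then inductively peel off the leftmost layer via Theorem~\ref{thm:main} and Lemma~\ref{lem:wh} to establish uniqueness in that fundamental domain. Your write-up is in fact slightly more careful than the paper's about the fundamental-domain bookkeeping and the genericity tracking, but the skeleton is identical.
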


\begin{proof}
 It is easy to see from Theorem \ref{T:elecwhirl} that the electrical $R$-matrix swaps the radii $R_k$. Assume we have a solution for conductances in $N(m)$
 with given universal response matrix.
Apply the electrical $R$-matrix to reorder the radii $R_k$ in non-increasing order.  Then Theorem \ref{thm:main} allows us to recover the conductances in 
the leftmost $N(1)$ part of the network.  In particular, these conductances are the same for any solution.  Once we know that, we can use Lemma \ref{lem:wh} to 
recover the universal response matrix of the remaining $N(m-1)$ part of the network. Then we repeat the procedure. We see that once we require the radii to form a non-increasing sequence, 
the conductances are recovered uniquely by Theorem \ref{thm:main}. Therefore
all other solutions can be obtained from that one by action of $S_m$, which is what we want.  In the generic case when all radii are distinct, the orbit has size $m!$.
\end{proof}

\begin{remark}\label{rem:monodromy}
In the language of \cite{LP3}, Theorem \ref{thm:sol} says that the group $S_m$ generated by electrical $R$-matrices is exactly the {\it monodromy group} of the network $N(m)$.  It is the group acting on the edge weights of $N(m)$ obtained by transforming the network via local electrical equivalences back to itself.
\end{remark}

\subsection{Conjectural solution to general case}

It is convenient to describe the general answer we expect using the language of {\it {medial graphs}}, see for example \cite{CIM, dVGV}.  Draw {\it wires} through the electrical network so that 
they pass through each edge and connect inside each face as shown in the first two pictures in Figure \ref{fig:elec17}. The third picture shows an example of an electrical network and its 
medial graph.  Note that the medial graph always has four-valent vertices, and that the {\it wires} of the medial graph ``go straight through'' each vertex.

\begin{figure}[h!]
    \begin{center}
    \input{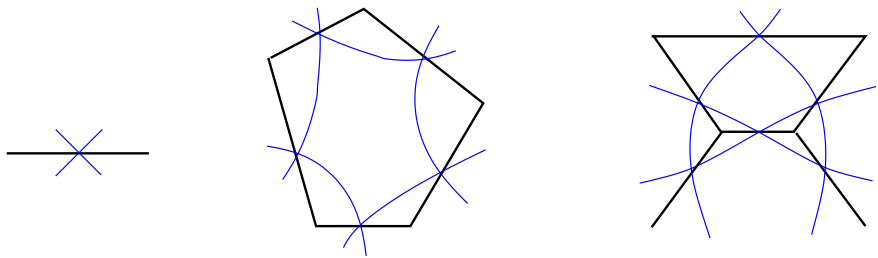}
    \end{center}
    \caption{Constructing the medial graph: the black edges belong to the electrical network, and the blue lines constitute the medial graph.}
    \label{fig:elec17}
\end{figure}

The medial graph of the networks $N(m)$ looks like $2n$ horizontal wires crossed by $m$ cycles, as shown in Figure \ref{fig:elec18}.
\begin{figure}[h!]
    \begin{center}
    \input{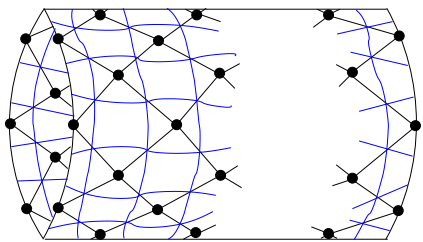}
    \end{center}
    \caption{$(N(m)$ and its medial graph.}
    \label{fig:elec18}
\end{figure}

In the terminology of \cite{CIM} a circular planar electrical network is {\it {critical}} if its medial graph avoids {\it {lenses}}, which is equivalent to saying that every pair of wires crosses as few times as possible,
given their respective homotopy types.  If $G$ is a cylindrical electrical network, we say that $G$ is {\it critical} if the universal cover of $G$ satisfies this condition; namely, the medial graph has wires which cross as few times as possible.

Let us call a cylindrical network {\it {canonical}} if the medial graph of its universal cover has the following form.  First, there are three kinds of wires: (I) some wires connect points on opposite boundaries, (II) some wires 
connect points on the same boundary, and (III) some wires do not intersect the boundary at all and correspond to (simple) cycles around the cylinder. Secondly, we require that the third kind of wires do not intersect the second kind, and furthermore, all points of intersection of wires of the first kind with themselves happen strictly before they intersect wires of the third kind. An illustration is given in Figure 
\ref{fig:elec19}.

\begin{figure}[h!]
    \begin{center}
    \input{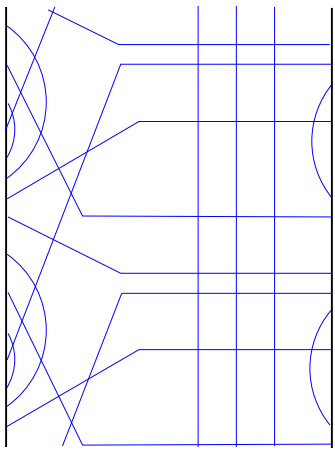}
    \end{center}
    \caption{The (universal cover of the) medial graph of a canonical network.}
    \label{fig:elec19}
\end{figure}

\begin{conjecture} \label{conj:gen}
\
\begin{itemize}
\item[(1')]
Any cylindrical electrical network can be transformed using local electrical transformations (those in Section \ref{ss:trans} and the electrical $R$-matrix) into a critical cylindrical electrical network.
\item[(2')]
Any two cylindrical electrical networks $G$ and $G'$ with the same universal response matrices are connected by local electrical equivalences.  Furthermore, if $G$ and $G'$ are both critical, then only star-triangle transformations, and electrical $R$-matrix transformations are needed.
\item[(3')]
If a cylindrical electrical network is critical canonical, then the conductances corresponding to all crossings involving wires of types (I) and (II) can be recovered uniquely. The conductances corresponding to crossings of wires of type (I) and type (III) can be recovered up to the electrical $R$-matrix action.
\item[(5')]
The space $X$ of universal response matrices of cylindrical electrical networks has an infinite stratification by $X = \sqcup C_i$ where each $C_i \simeq \R_{>0}^{d_i} \times \R_{\geq 0}^{e_i}$ is a semi-closed cell that can be obtained as the set of universal response matrices for a fixed critical network with varying edge weights. 
\end{itemize}
\end{conjecture}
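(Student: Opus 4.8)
The plan is to treat the whole conjecture through the combinatorics of medial graphs on the cylinder, exactly paralleling the loop-group picture of \cite{LP,LP3}, where the analogues of (1')--(5') are already established. The guiding dictionary is that a critical medial configuration on the (universal cover of the) cylinder plays the role of a reduced word for an element of the affine symmetric group: the homotopy types of the wires record the underlying ``permutation,'' while the precise crossing pattern records a reduced expression. Under this dictionary the star-triangle transformation and the electrical $R$-matrix are the two elementary moves relating reduced expressions, and the Yang--Baxter relation (Theorem \ref{thm:RYB}) together with Proposition \ref{prop:un} guarantees that these moves satisfy the braid relations. Thus one first sets up this dictionary carefully, classifying critical cylindrical networks by the homotopy data of their medial wires (wires of type (I), (II), (III) as in the definition of a canonical network).

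For (1') I would adapt the lens-removal argument of \cite{dVGV}. One shows that whenever a medial configuration has a lens --- two wires crossing more often than forced by their homotopy types --- there is a sequence of electrical transformations from Section \ref{ss:trans} removing it. Planar lenses are removed exactly as in \cite{dVGV, CIM} using series-parallel, pendant, loop, and star-triangle moves; the genuinely cylindrical lenses, formed when a type-(III) cycle crosses a type-(I) wire superfluously, are removed by the electrical $R$-matrix. Since each move strictly decreases the total number of crossings, the process terminates at a critical network. For (2') the target is a Matsumoto--Tits-type theorem: any two critical networks with the same universal response matrix have the same medial homotopy data and are connected by the braid moves (star-triangle and $R$-matrix). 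The input is that the universal response matrix determines the homotopy data of a critical configuration, which I would extract from the cylindrical total nonnegativity of $\L$ (Proposition \ref{P:TNN}) together with the connectivity data of groves in the sense of \cite{KW}.

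For (3') I would extend the electrical ASW factorization from $N(m)$ to canonical networks by peeling off layers. The canonical form is arranged so that the type-(I) and type-(II) crossings sit in a ``planar'' outer region, whose conductances are recovered uniquely by taking limits of ratios of Kenyon--Wilson grove generating functions, exactly as in \cite{CIM} and as in Theorems \ref{thm:asw} and \ref{thm:main}; the inner region consisting of the type-(III) cycles is an $N(m)$-block, whose conductances are recovered only up to the $S_m$-action of the $R$-matrix by Theorem \ref{thm:sol}. The technical heart is to generalize Lemmas \ref{L:short}, \ref{L:optimalpath}, and \ref{L:combtype} to the canonical setting, so that the shortest-path and boundedness estimates still isolate a single edge weight in the limit. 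For (5') one fixes a critical combinatorial type and studies the map from edge weights to universal response matrices: its image is the claimed cell, where the $\R_{>0}$ directions are the uniquely recoverable type-(I)/(II) conductances together with the $R$-matrix-invariant symmetric functions of the radii $R_k$, and the $\R_{\geq 0}$ directions record degenerations in which a cycle collapses or a lens is created. Disjointness and the semi-closed structure follow by showing that each boundary stratum corresponds to a network of strictly simpler combinatorial type obtained by sending some edge weight to $0$ or $\infty$.

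I expect the main obstacle to be (2'): establishing that the universal response matrix is a complete invariant of a critical network modulo the monodromy group of Remark \ref{rem:monodromy} and the star-triangle moves. In the planar case this is the content of dVGV's proof that the lensless medial graph is reconstructible from the response matrix, but on the cylinder the passage to the infinite universal cover and the winding numbers recorded by $\L$ make confluence of the moves genuinely harder: one must show both that no information beyond the homotopy data and the $R$-matrix orbit survives, and that the braid moves act transitively on critical configurations with fixed homotopy data. Controlling this interplay between the winding data and the grove combinatorics is, I believe, where the real difficulty lies.
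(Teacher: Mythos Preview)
The statement you are addressing is Conjecture~\ref{conj:gen}, and the paper does \emph{not} prove it. It is explicitly presented as an open conjecture; the only general content the paper provides after stating it is an explanation of why the cells in (5') should be semi-closed (via the ordering of the radii $R_k$), together with the special case $G=N(m)$ established earlier in Theorem~\ref{thm:sol}. So there is no ``paper's own proof'' to compare your proposal against.

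Your proposal is therefore not a proof but a strategy, and you are honest about this in your final paragraph. The strategy is reasonable and closely follows the motivation the paper itself gives: the analogy with the totally nonnegative loop group results of \cite{LP,LP3}, the medial-graph/reduced-word dictionary, and the idea of peeling off a planar outer region to reduce to an $N(m)$ core handled by Theorem~\ref{thm:sol}. But several of your steps are genuine open problems rather than adaptations of existing arguments. In particular: (i) your claim for (1') that ``genuinely cylindrical lenses \ldots\ are removed by the electrical $R$-matrix'' is not established anywhere---the $R$-matrix as defined in the paper acts only on $N(2)$-blocks and does not by itself remove crossings, so a lens-reduction argument on the cylinder needs new ideas; (ii) your plan for (2'), that the universal response matrix determines the medial homotopy data of a critical network, is exactly the hard step the paper leaves open, and cylindrical total nonnegativity (Proposition~\ref{P:TNN}) is only a necessary condition, not known to be sufficient (see the remark following it); (iii) your plan for (3') presumes that the outer type-(I)/(II) region of a canonical network is amenable to the same ASW-style limit arguments, but Lemmas~\ref{L:short} and~\ref{L:optimalpath} are tightly tied to the rigid combinatorics of $N(m)$ and do not obviously extend. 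You correctly flag (2') as the main obstacle; I would add that (1') and the extension of the ASW machinery in (3') are also substantive gaps, not routine generalizations.
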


For the (missing) cylindrical analogue of (4) of the Theorem in the introduction see Section \ref{ssec:TNN}.

Another way to phrase Conjecture \ref{conj:gen}(3') is that the {\it monodromy group} of a critical canonical cylindrical network is a symmetric group, generated by electrical $R$-matrices.  See Remark \ref{rem:monodromy}.

Let us explain the semi-closed cells in Conjecture \ref{conj:gen}(5').  Let $G$ be a critical canonical cylindrical electrical network.  Some edge weights can be recovered uniquely and these each give a $\R_{>0}$ in the parametrization.  The remaining part of the network is essentially one of the networks $N(m)$, whose edge weights can be recovered uniquely up to the electrical $R$-matrix action (Theorem \ref{thm:sol}).  So the response matrices would be parametrized by the orbit space $(\R_{>0}^{mn})/S_m$.  However, we can pick a distinguished element in each orbit: namely the one where the radii $R_k$ are non-increasing.  The corresponding response matrices would then be parametrized by $R_m \in \R_{>0}$, $R_1-R_2, R_2-R_3,\ldots, R_{m-1}-R_{m} \in \R_{\geq 0}$ together with some collection of edge weights which can be freely chosen in $\R_{>0}$.  Thus the universal response matrices of critical canonical cylindrical electrical network is parametrized by a semi-closed cell $C \simeq \R_{>0}^{d} \times \R_{\geq 0}^{e}$.


\begin{thebibliography}{99}

\bibitem[ASW]{ASW} {\sc M.~Aissen, I.J. Schoenberg, and A. M. Whitney:}
On the generating functions of totally positive sequences. I., {\sl   J. Analyse Math.}, (1952), no.
2, 93--103.


\bibitem[CIM]{CIM} {\sc E.B.~Curtis, D.~Ingerman and J.A.~Morrow:}
Circular planar graphs and resistor networks, {\sl   Linear Algebra Appl.}, 283 (1998), no. 1-3, 115--150.

\bibitem[CM]{CM} {\sc E.B.~Curtis and J.A.~Morrow:} Inverse problems for electrical networks, World Scientific, Series on Applied Mathematics, Vol. 13, 2000.



\bibitem[dVGV]{dVGV} {\sc Y.C.~de Verdi\`ere, I.~Gitler, D.~Vertigan:}
R\'{e}seaux \'{e}lectriques planaires. II,  {\sl Comment. Math. Helv.}, 71(1) (1996), 144--167.


\bibitem[Ken]{Ken} {\sc A.E.~Kennelly:}
Equivalence of triangles and stars in conducting networks, Electrical World and Engineer, 34 (1899),
413--414.
 
\bibitem[KKS]{KKS} {\sc R.M.~Kashaev, I.G.~Korepanov,  S.M.~Sergeev:}
Functional Tetrahedron Equation, Theor. Math. Phys. 117:3 (1998), 1402--1413.

\bibitem[KW]{KW} {\sc R.~Kenyon and D.~Wilson:}
Boundary partitions in trees and dimers, Trans. Amer. Math. Soc. 363 (2011), no. 3, 1325--1364.

\bibitem[KW2]{KW2} {\sc R.~Kenyon and D.~Wilson:}
Combinatorics of tripartite boundary connections for trees and dimers, Electron. J. Combin. 16 (2009), no. 1, research paper 112.

\bibitem[LP08+]{LP} {\sc T.~Lam and P.Pylyavskyy:}
Total positivity in loop groups I: whirls and curls, preprint, 2008; {\tt arXiv:0812.0840}.

\bibitem[LP10+]{LP3} {\sc T.~Lam and P.~Pylyavskyy:} Crystals and total positivity on orientable surfaces, preprint, 2010; {\tt arXiv:1008.1949}.

\bibitem[LP11a+]{LP2} {\sc T.~Lam and P.~Pylyavskyy:} Electrical networks and Lie theory, preprint, 2011; {\tt arXiv:1103.3475}.

\bibitem[LP11b+]{LP4} {\sc T.~Lam and P.~Pylyavskyy:} Electrical paths, $R$-matrices and crystals, in preparation.

\bibitem[Lus]{Lus} {\sc G.~Lusztig:}
Total positivity in reductive groups, Lie theory and
geometry, 531--568, Progr. Math., 123, Birkh\"{a}user Boston,
Boston, MA, 1994.
\end{thebibliography}
\end{document}